\renewenvironment{proof}[1][\proofname] {\par\pushQED{\qed}\normalfont\topsep6\p@\@plus6\p@\relax\trivlist\item[\hskip\labelsep\bf#1\@addpunct{.}]\ignorespaces}{\popQED\endtrivlist\@endpefalse}
\renewcommand{\section}{\@startsection
{section}
{1}
{0mm}
{-\baselineskip}
{1\baselineskip}
{\Large \bfseries}}
\renewcommand{\subsection}{\@startsection
{subsection}
{2}
{0mm}
{-\baselineskip}
{0.5\baselineskip}
{\normalfont\large\bfseries}}
\def\defthm#1#2#3#4{
  \newtheorem{#1}[theorem]{#3}
  \newtheorem*{#1*}{#3}
  \newtheorem{#2}[theorem]{#4}
  \newtheorem*{#2*}{#4}
  \crefname{#1}{#3}{#4}
  \crefname{#2}{#4}{#4}  
}
\numberwithin{equation}{section}
\newtheoremstyle{mythm}%
{10pt}
{}
{\itshape}
{}
{\bf}
{.}
{.5em}
{}%
\newtheoremstyle{mydef}%
{10pt}
{3pt}
{}
{}
{\bf}
{.}
{.5em}
{}%
\newtheoremstyle{myrmk}%
{10pt}
{3pt}
{}
{}
{\bf}
{.}
{.5em}
{}%
\theoremstyle{mythm}
\newtheorem{theorem}{Theorem}[section]
\newtheorem*{theorem*}{Theorem}
\crefname{theorem}{Theorem}{Theorems}
\crefname{table}{Table}{Table}
\newtheorem*{replemmax}{\reptitle}
 {\end{replemmax}}
\theoremstyle{mydef}
\theoremstyle{myrmk}
\crefname{section}{Section}{Sections}
\def\lam#1{{\lambda}\@lamarg#1:\@endlamarg\@ifnextchar\bgroup{.\,\lam}{.\,}}
\def\@lamarg#1:#2\@endlamarg{\if\relax\detokenize{#2}\relax #1\else\@lamvar{\@lameatcolon#2},#1\@endlamvar\fi}
\def\@lamvar#1,#2\@endlamvar{(#2\,{:}\,#1)}
\def\@lameatcolon#1:{#1}
\def\lamu#1{{\lambda}\@lamuarg#1:\@endlamuarg\@ifnextchar\bgroup{.\,\lamu}{.\,}}
\def\@lamuarg#1:#2\@endlamuarg{#1}
\newcommand{\pullback}[1]{\save*!/#1-1.2pc/#1:(-1,1)@^{|-}\restore}
\newcommand{\drpullback}{\pullback{dr}}
\newcommand{\ie}{\text{i.e.\ }}
\newcommand{\myemph}{\textit}
\newcommand{\defeq}{=_{\operatorname{def}}}
\newcommand{\co}{\colon}
\newcommand{\iso}{\cong} 
\newcommand{\op}{\operatorname{op}}
\newcommand{\SSet}{\mathbf{SSet}}
\newcommand{\UU}{\overline{\mathsf{U}}}
\newcommand{\U}{\mathsf{U}}
\newcommand{\VV}{\overline{\mathsf{V}}}
\newcommand{\V}{\mathsf{V}}
\newcommand{\Weq}{\mathsf{Weq}}
\newcommand{\Iseq}{\mathsf{Inv}}
\newcommand{\Linv}{\mathsf{LInv}}
\newcommand{\Rinv}{\mathsf{RInv}}
\newcommand{\Set}{\mathbf{Set}}
\DeclarePairedDelimiter\braces\lbrace\rbrace
\newcommand{\Id}{\mathsf{Id}}
\newcommand{\hattimes}{\mathbin{\hat{\times}}}
\newcommand{\cal}[1]{\mathcal{#1}}
\newcommand{\Cof}{\mathsf{Cof}}
\newcommand{\TrivFib}{\mathsf{TrivFib}}
\newcommand{\Fib}{\mathsf{Fib}}
\newcommand{\TrivCof}{\mathsf{TrivCof}}
\newcommand{\kcyl}{\delta^k}
\def\ignorespacesandallpars{%
  \@ifnextchar\par
    {\expandafter\ignorespacesandallpars\@gobble}%
    {}%
}
\newcommand{\note}[3][]{\def\auth{#1}\textcolor{#2}{{[\ifx\auth\empty\else\auth: \fi{#3}]}}}
\newcommand{\alert}[3][]{\def\auth{#1}\textcolor{#2}{{\ifx\auth\empty\else\auth: \fi{#3}}}}
\newcommand{\BFFib}{\mathsf{BFib}}
\newcommand{\Path}{\mathsf{Path}}
\title[]{Towards a constructive  simplicial model \\ of Univalent Foundations}
\begin{document}

\begin{abstract}
We provide a partial solution to the problem of defining a constructive version of 
Voevodsky's simplicial model of Univalent Foundations. For this, we prove constructive counterparts of the 
necessary results of simplicial homotopy theory, building on the constructive
version of the Kan-Quillen model structure established by the second-named author. In particular, 
 we show  that dependent products along fibrations with
 cofibrant domains preserve fibrations, establish the weak equivalence extension property
for weak equivalences between fibrations with cofibrant domain and define a univalent 
fibration that classifies small fibrations between bifibrant objects. 
These results allow us to define a comprehension category supporting identity types,
$\Sigma$-types, $\Pi$-types and a univalent universe, leaving only
a coherence question to be addressed.
\end{abstract}

\author{Nicola Gambino}
\address{School of Mathematics, University of Leeds, United Kingdom}
\email{n.gambino@leeds.ac.uk}

\author{Simon Henry}
\address{Department of Mathematics and Statistics, University of Ottawa, Canads}
\email{shenry2@uottawa.ca}

 \date{\today}

\maketitle

\tableofcontents

\section*{Introduction} 

\textbf{Context and motivation.} This paper investigates Voevodsky's model of Martin-L\"of type theory (ML)
extended with the Univalence Axiom (UA)  in the category of simplicial sets~\cite{voevodsky-simplicial-model}.  This model is of fundamental importance since it  informs a new approach to the development of mathematics, known as Univalent Foundations, which takes the notion of a space (rather than that of a set) as the most primitive and is supported by a new approach to computer-assisted formalisation of mathematics~\cite{voevodsky:library}. In combination with the discovery of a close connection between identity types and Quillen's homotopical algebra~\cite{awodey-warren:homotopy-idtype,gambino-garner:idtypewfs}, the simplicial model also provided inspiration for the development of  Homotopy Type Theory~\cite{hottbook}.

The original definition of the model  was carried out in  Zermelo-Fraenkel set theory extended with the axiom of choice (ZFC) and two inaccessible cardinals~\cite[Theorem~3.4.3]{voevodsky-simplicial-model}. Given the wide gap in proof-theoretic strength between that theory and ML~\cite{GrifforE:strsml} and the fact that the former is a classical theory  while the latter is a constructive one,  the question of whether the simplicial model could be defined working constructively (\ie without the use of the law of excluded middle or the axiom of choice) arose immediately after its discovery around~2006~\cite{KapulkinC:uniss,StreicherT:modtts,VoevodskyV:notts} and was one of the central issues investigated during the thematic programme on Univalent Foundations at the Institute for Advanced Study in 2012/13. In spite of  significant efforts since then, the question is still open to date.

The aim of this paper is to provide a partial solution to this problem.  In order to
explain our results and the novel aspects of our work, let us recall from~\cite{voevodsky-simplicial-model} that the main results of  homotopy theory necessary 
to define the simplicial model of ML(UA) = ML + UA are 
the existence of a Quillen
model structure on the category of simplicial sets whose fibrations are the Kan fibrations, 
 the fact that dependent product along a fibration preserves fibrations,
 the existence of a fibration $\pi \co \UU \to \U$ that classifies small fibrations, 
the fibrancy of $\U$ and, finally, the univalence of the fibration $\pi$. In combination, these results
allow us to define a comprehension category 
\[
\begin{gathered}
\xymatrix{
\mathbf{Fib}\,  \ar[dr] \ar@{>->}[rr] & & \SSet^{\to} \ar[dl]^{\mathrm{cod}} \\ 
 & \SSet &  }
 \end{gathered}
\]
where $\SSet$ is the category of simplicial sets 
and $\mathbf{Fib}$ is the category of Kan fibrations, 
and to show that this comprehension category supports the type
constructors of ML (which we take here to be $0$, $1$, $+$, $\mathbb{N}$, 
$\mathsf{Id}$, $\Sigma$, $\Pi$ and $\mathsf{U}$),
in the sense of~\cite{LumsdaineP:locuoc},  and a univalent type universe closed under the above type
constructors,  in the sense of~\cite{ShulmanM:allths}. Such a comprehension category is not quite a 
model of ML(UA) because of well-known strictness issues~\cite{HofmannM:intttl}, but it gives rise to one
by an appropriate coherence result~\cite{voevodsky-simplicial-model,LumsdaineP:locuoc}.

A fundamental obstruction to a constructive development of  the simplicial model was discovered in~\cite{coquand-non-constructivity-kan}, where it was shown that
it is not possible to prove constructively that for a simplicial set $A$ and a
Kan complex $B$, the exponential~$B^A$ is again a Kan complex.
Because of this, Coquand and his collaborators  defined homotopy-theoretic models of type
theories with the univalence axiom in categories of cubical sets~\cite{coquand-cubical-sets}, opening a profitable new research direction, cf.~\cite{awodey-cubical,cohen-et-al:cubicaltt,PittsAM:aximct} for example. Apart from switching 
from simplicial sets to cubical sets, they also switched from ordinary fibrations, defined by a right lifting property,
to uniform fibrations, defined as maps equipped with additional structure which 
 provides a choice of fillers for lifting problems satisfying uniformity conditions, as in algebraic weak factorisation systems~\cite{garner:small-object-argument,grandis-tholen-nwfs}. While 
categories of cubical sets considered in this line of work allow us to define constructively models of 
ML(UA) and admit a Quillen
model structure~\cite{SattlerC:equepu}, none of them
 is known to be Quillen equivalent to simplicial sets or topological spaces. Ongoing work of Awodey, Coquand, Riehl and Sattler  aims at addressing this issue 
 using equivariant presheaves in the cubical setting.

In simplicial sets, although it is possible to develop  a constructive theory of uniform fibrations and prove that dependent product along a uniform fibration preserves
uniform fibrations (and in particular that for a simplicial set $A$ and an 
uniform Kan complex $B$, the simplicial set $B^A$ is an algebraic Kan complex),
  as shown in~\cite{gambino2017frobenius}, the uniform fibrations
are not as well-behaved as in cubical sets,
since they do not admit a classifier, essentially because 
representables are not closed under under products in simplicial sets, unlike in cubical sets~\cite{SattlerC:faiaut}. 
In summary, to date there is no known model of ML(UA) that is definable in a constructive setting and
based on a category homotopically equivalent to that of simplicial sets. 

\smallskip

\noindent
\textbf{Main results.}
A breakthrough has been obtained  by the second-named author in~\cite{henry2019qms}, where, building on his previous work on weak model structures~\cite{henry2018wms},
it is shown constructively that the category of simplicial sets admits a Quillen model structure in which the
fibrations are the Kan fibrations. Moreover, this model structure is shown to be cartesian and proper.  Crucially for our goals here,  in this model structure not all monomorphisms are 
cofibrations, but only those monomorphisms $i \co A \to B$ that are levelwise complemented 
and for which the degeneracy of $n$-simplices is decidable in $B_n \setminus A_n$, for every $[n] \in \Delta$. In particular, not all simplicial sets are cofibrant, but only those with decidable degeneracies. However, this model structure coincides with
the standard one as soon as the law of excluded middle is assumed. Two other  
proofs of the  existence of this constructive
model structure are obtained in~\cite{GambinoN:anocp}.

 The present paper extends this work to obtain constructive counterparts of all the other main results of 
homotopy theory necessary to define the simplicial model of ML(UA). In particular, our main results are the following:
\begin{itemize}
\item \cref{thm:MainPathObject}, asserting that, for a fibration with cofibrant domain $p \co A \to X$, the mapping path space 
gives a factorisation of the diagonal $\delta_p \co A \to A \times_X A$ as a trivial cofibration followed by a fibration;
\item \cref{cor:Pi_types_are_fibrant}, asserting that dependent product along fibrations with cofibrant
domain preserves fibrations,  so that, for a cofibrant simplicial set $A$ and a Kan complex $B$, the exponential $B^A$ is a Kan complex; 
\item \cref{thm:universe-uc}, asserting the existence of a small fibration $\pi_c \co
\UU_c \to \U_c$ that classifies small fibrations between cofibrant objects;
\item \cref{thm:fibrancy-of-u-and-uc}, asserting that the simplicial set $\U_c$ is a cofibrant Kan complex;
\item \cref{thm:univalence-of-u-and-uc}, asserting that the fibration $\pi_c  \co
\UU_c \to \U_c$ is univalent.
\end{itemize}

These results allow us to define a comprehension category 
\begin{equation*}
\begin{gathered}
\xymatrix{
\mathbf{Fib}_{ \mathsf{cof}} \, \ar[dr] \ar[rr] & & \SSet^{\to}_{\mathsf{cof}} \ar[dl]^{\mathrm{cod}} \\ 
 & \SSet_{\mathsf{cof}} \, , &  }
 \end{gathered}
 \end{equation*}
 where $\SSet_{\mathsf{cof}}$ is the category of cofibrant simplicial sets and  $\mathbf{Fib}_{ \mathsf{cof}}$ is the category of fibrations between cofibrant simplicial
 sets, and to show that this comprehension category supports the type constructors of ML and a univalent type universe~(\cref{th:main_ContextualCat}). In this 
comprehension category, type-theoretic
contexts correspond to cofibrant objects, while dependent types correspond fibrations $p \co A \to X$ where 
 $A$ and $X$ are cofibrant. This choice is informed by the fact that, for a simplicial set~$X$, the slice category~$\mathbf{SSet}_{/X}$ admits a model structure in which the bifibrant objects are
 the fibrations $p \co A \to X$ with $A$ cofibrant. Our main results above show how this comprehension category  supports the type constructors of ML and a univalent type universe.
For example, \cref{thm:MainPathObject} shows that identity types can be interpreted as 
mapping path spaces. 

\smallskip

\noindent
\textbf{Novel aspects.} The key novelty of our approach is the use of the homotopy-theoretic notion of cofibrancy to encapsulate the logical notion of decidability of degeneracies and to work with it in a 
mathematically efficient way. Also, to the best of our knowledge, this paper is the first to make use of the cofibrant replacement functor in the study of models of dependent type theory.

We will use the cofibrant  replacement functor
to obtain $\Pi$-types and the type-theoretic universe in our comprehension category.  For $\Pi$-types, given a fibration $p \co A \to X$ with cofibrant domain, the result of applying the dependent product along $p$ to a fibration $q \co B \to A$ with cofibrant domain produces a map  $\Pi_p(q) \co \Pi_A(B) \to X$, which is a fibration by~\cref{cor:Pi_types_are_fibrant}, but whose domain
is not necessarily cofibrant. In order to remedy this, we define  $\Pi$-types to be given by a cofibrant replacement of
$\Pi_p(q) \co \Pi_A(B) \to X$, which is now a fibration with cofibrant domain. 
Interestingly, this definition validates
a judgemental $\beta$-rule and a propositional $\eta$-rule (see~\cref{rem:pi-types} for details), a combination that arises naturally when  Martin-L\"of type theory is presented
within the Logical Framework~\cite{nordstrom-petersson-smith:ml,GarnerR:strdp}, as well as in
the versions of the Coq assistant used for the original development of  univalent foundations
library~\cite{voevodsky:library}. Interestingly, Voevodsky's proof that the univalence axiom implies the principle of function extensionality exploits crucially the propositional $\eta$-rule for $\Pi$-types. 

For the definition of a  type-theoretic universe satisfying the univalence axiom, we proceed
in two steps. First, we construct a small fibration $\pi \co \UU \to \U$ that classifies small fibrations with cofibrant fibers and prove that $\U$ is fibrant. 
Then, we consider a cofibrant replacement~$\U_c$ of~$\U$, which comes equipped with a trivial fibration $\varepsilon \co \U_c \to \U$, so that 
we obtain a small fibration $\pi_c \co \UU_c \to \U_c$ with bifibrant codomain and cofibrant domain by pullback along $\varepsilon$. Our final result, \cref{thm:univalence-of-u-and-uc}, shows that
$\pi \co \UU \to \U$ and $\pi_c \co \UU_c \to \U_c$ are univalent fibrations.

We refrain from claiming that our work provides a complete solution to the problem of defining a constructive simplicial model of type theory since the comprehension category we construct is not split and therefore does not
satisfy strictly the stability conditions governing the interaction of substituion and type-formation rules, often
referred to as the Beck-Chevalley conditions. While there are several results to address similar issues~\cite{clairambaultdybjer2014,HofmannM:intttl,voevodsky-simplicial-model,LumsdaineP:locuoc,ShulmanM:allths}, they do not seem to apply in our  context. However, our results here reduce the problem of defining a constructive version of the simplicial model of Univalent Foundations to that of proving appropriate coherence result for a particular kind of 
comprehension category defined, which is independent of simplicial homotopy theory  and we leave to future work (see \cref{sec:conclusion} for
details).

Let us conclude these introductory remarks by mentioning that although our main results are inspired by the existing literature, especially those in~\cite{voevodsky-simplicial-model}, their proofs require systematic and careful cofibrancy considerations. In particular, in order to 
exhibit identity types as path objects (\cref{thm:MainPathObject}) and to prove the weak equivalence
extension property (\cref{Prop:Homotopy_ext_prop}) we will need to establish the non-trivial
and surprising fact that the appropriate dependent products preserve cofibrancy (see~\cref{prop:X^kCofibrant} and~\cref{Lemma:ForTheExtProperty} for details), a 
fact that does not hold in general. Furthermore, when discussing a universe, we are not interested in defining a small fibration that classifies small fibrations as in~\cite{voevodsky-simplicial-model}, but
rather a small fibration that classifies small fibrations between cofibrant objects.

\smallskip

\noindent 
\textbf{Outline of the paper.} \cref{sec:preliminaries} recalls the constructive version of the 
Kan-Quillen model structure and some auxiliary results from~\cite{henry2019qms}. \cref{sec:basrp} establishes some basic results on pullbacks. \cref{sec:pats} shows how path spaces provide factorisations
as trivial cofibration followed by fibration. \cref{sec:Pi-types} proves the restricted Frobenius property. \cref{sec:equep}
establishes the weak equivalence extension property.
\cref{sec:unifbb} introduces a fibration $\pi_c \co \UU_c \to \U_c$ that classifies small fibrations
between cofibrant Kan complexes. \cref{sec:fibrancy-and-univalence} proves that $\U_c$ is bifibrant and that $\pi_c \co \UU_c \to \U_c$ is univalent, using the weak equivalence extension property.
We conclude in~\cref{sec:conclusion} by summarising our main results in terms of
the comprehension category and stating some open problems for future research.

\smallskip

\noindent
\textbf{Remarks on constructivity.} We work in  Constructive Zermelo-Fraenkel set theory (CZF),
a set theory based on intuitionistic logic~\cite{AczelP:typtic-I}. When discussing smallness of sets,
simplicial sets and fibrations, we assume the existence of an inaccessible set $\mathsf{u}$, as defined 
in~\cite[Definition~18.1.2]{AczelP:notcst}. We say that a set is small if it is an element of $\mathsf{u}$.
For the closure of the type universe under $\Pi$-types, we will assume a form of `propositional resizing', asserting that every subset 
of a small set is again small, \ie  $\forall a \, , b \,  ( a \subseteq b \land b \in \mathsf{u} \rightarrow a \in \mathsf{u} )$.  A further discussion of what can be done without this assumption can be found in \cref{rem:strength}. 

Throughout the paper, we use a slight abuse of language and say that a map $f \co X \to Y$ has
the right lifting property with respect to a given set of maps to mean that we have a pair consisting
of $f \co X \to Y$ and a function assigning diagonal fillers to the corresponding set of lifting problems, with
no uniformity conditions. In particular, fibrations and trivial fibrations will always be treated as maps equipped with additional structure. Morphisms between such maps will always be taken to be mere commutative squares, with no compatibility conditions with respect to the chosen diagonal fillers. In contrast, cofibrations and
trivial cofibrations will be simply maps satisfying suitable properties.

\smallskip

\noindent 
\textbf{Acknowledgements.} Nicola Gambino gratefully acknowledges the support of
EPSRC under grant EP/M01729X/1 and the US Air Force Office for Scientific Research under 
agreement FA8655-13-1-3038 and the hospitality of the 
School of Mathematics of the University of Manchester and the Centre for Advanced Study in Oslo,
where the paper was written while on study leave from the University of Leeds. During the preparation of this work, Simon Henry was supported by the Operational Programme Research, Development and Education Project ``Postdoc@MUNI'' (No. CZ.02.2.69/0.0/0.0/16\_027/0008360). We are also grateful to Steve Awodey, Marc Bezem, Paolo Capriotti, Thierry Coquand, Andr\'e Joyal, Andy Pitts, Michael Rathjen, Christian Sattler, Thomas Streicher, Andrew Swan and Karol Szumi{\l}o  for useful conversations.

\section{Preliminaries} 
\label{sec:preliminaries}

We write $\Delta$ for the simplicial category. The objects of $\Delta$ are written as $[n]$, for $n \geq 0$.
We write $\SSet \defeq [\Delta^{\op}, \Set]$ for the category of simplicial sets. For $n \geq 0$, $\Delta[n] \in \SSet$ is the representable simplicial set associated to $[n] \in \Delta$. Given a map $f \co Y \to X$ in $\SSet$, we write $f^* \co \SSet_{/X} \to \SSet_{/Y}$
for the associated pullback functor. The functor $f^*$ has a left adjoint, defined
by composition, which we write $\Sigma_{f} \co \SSet_{/Y} \to \SSet_{/X}$ and refer
to as the \emph{dependent sum} along $f$. Since $\SSet$ is locally cartesian closed, pullback
along $f$ has also a right adjoint, which we write 
$\Pi_f \co \SSet_{/Y} \to \SSet_{/X}$ and refer to as the \emph{dependent
product}  along $f$. The action of these functors on a
map $g \co Z \to Y$ will be written $\Sigma_f(g) \co \Sigma_Y(Z) \to X$ and 
$\Pi_f(g) \co \Pi_Y(Z) \to X$, respectively. Since  $\Sigma_f$ is defined by composition,~$\Sigma_Y(Z) \defeq Z$ and~$\Sigma_f(g) = fg $. 

As a special case of a well-known result for presheaf 
categories,  for every~$[n] \in \Delta$ there is an equivalence of categories
\begin{equation}
\label{equ:psh-slice-sset}
\SSet_{/\Delta[n]} \simeq  [ {\Delta_{/[n]}}^{\op}, \Set]   \, .
\end{equation}
For $F \co {\Delta_{/[n]}}^{\op} \to \Set$, we write $\pi_F \co \int F \to \Delta[n]$
for the corresponding object of~$\SSet_{/ \Delta[n]}$. Here, $\int F$ is the
simplicial set whose $m$-simplices are pairs $(\theta, x)$ where $\theta \co [m] \to [n]$
is a map in~$\Delta$ and $x \in F(\theta)$. The components of the natural
transformation $\pi_F$ are the first projections.

For $[n] \in \Delta$, we write $i^n \co  \partial \Delta[n] \to \Delta[n]$ for the boundary inclusion into the $n$-simplex and, for $1 \leq i \leq n$,  
$h^i_n  \co \Lambda^i[n] \to \Delta[n]$  for the $i$-th horn inclusion into the $n$-simplex. The simplicial set $\Delta[1]$ is an interval object in $\SSet$, with endpoint inclusions~$\kcyl \co \braces{ k} \to \Delta[1]$ defined by~$\kcyl \defeq h^1_k$. Throughout this paper, we shall work  with the constructive 
Kan-Quillen model structure $(\Weq, \Cof, \Fib)$ on $\SSet$ defined in~\cite{henry2019qms}. 
For the convenience of the reader, we recall the main aspects of this model structure below.
For this, let $\TrivCof \defeq \Weq \cap \Cof$ and $\TrivCof \defeq \Weq \cap \Fib$ be the classes of trivial cofibrations and trivial fibrations,
respectively. 

The weak factorisation system $(\Cof, \TrivFib)$ of cofibrations and trivial fibrations  is cofibrantly generated by the set $\cal{I} \defeq \{ i^n \co  \partial \Delta[n] \to \Delta[n] \ | \ n \geq 0 \}$  of boundary 
inclusions, \ie 
\[
\big( \Cof, \TrivFib \big) = ( \mathsf{Sat}(\cal{J}) \, , \cal{J}^\pitchfork) \, .
\]
As shown in \cite[Proposition~5.1.4]{henry2018wms} a map $f \co Y \to X$ is a cofibration if  and only if 
it is a levelwise complemented monomorphism and the degeneracy of the simplices of $X_n ~\setminus~Y_n$ is decidable for every $[n] \in \Delta$. Thus, a simplicial set $X$ is
cofibrant if  degeneracy of the simplices of $X$ is decidable.
Note that a map between cofibrant objects is a cofibration
if and only if it is a levelwise complemented monomorphism. 
Cofibrant simplicial sets are of particular importance for our development because of their decidability property, which can be used to establish counterparts of classical results valid for all simplicial sets. An example is the Eilenberg-Zilber lemma~\cite[Lemma~5.1.2]{henry2018wms}, asserting that in a cofibrant simplicial set~$X$, any cell $x \in X$ can be written uniquely as $p^*(y)$, where $y$ is a non-degenerate cell of $X$ and $p$ is a degeneracy. 
\medskip

The weak factorisation system $(\TrivCof, \Fib)$ of  trivial cofibrations and fibrations is cofibrantly 
generated by the set $\cal{J} \defeq \{ h^k_n  \co \Lambda^k[n] \to \Delta[n]  \ | \ 0 \leq k \leq n \}$ of horn 
inclusions, \ie 
\[
(\TrivCof, \Fib) = ( \mathsf{Sat}(\cal{J}) \, , \cal{J}^\pitchfork) \, .
\] 
For a map $f \co Y \to X$, we denote the action of the pullback $f^*$ on a fibration~$p \co A \to X$ as
\[
\xymatrix{
A[f] \ar[r] \drpullback \ar@{->>}[d]_{p[f]} & A \ar@{->>}[d]^{p} \\
Y \ar[r]_f & X \, .}
\]

For $X \in \SSet$, we write $\Fib_{/X}$ for the category whose objects are fibrations with codomain~$X$ 
and whose maps are determined by saying that the functor $U \co \Fib_{/X} \to \SSet_{/X}$ forgetting the
fibration structure is  full and faithful. We then write $\BFFib_{/X}$ for the full subcategory of $\Fib_{/X}$  spanned by fibrations with cofibrant domain. For a simplicial set $X$, we write $\mathbb{L}(X)$ for its cofibrant replacement and $\mathbb{R}(X)$ for its
fibrant replacement. These objects come equipped with a trivial fibration $\varepsilon_X \co \mathbb{L}(X) \to X$ and a trivial cofibration $\eta_X \co 
X \to \mathbb{R}(X)$, respectively. An explicit definition of the cofibrant replacement is given 
in Appendix~\ref{sec:appendix}.

\medskip

The model structure $(\Weq, \Cof, \Fib)$ is proper, \ie both left and right proper~\cite[Propositions 2.2.9 and 3.5.2]{henry2019qms}
and the weak factorization systems $(\Cof, \TrivFib)$ and $(\TrivCof, \Fib)$
satisfy the so-called pushout product property~\cite[Proposition 5.1.5 and Corollary 5.2.3]{henry2018wms}. Recall that, given two maps $f \co Y \rightarrow X$ and $g \co B \rightarrow A$, their \emph{pushout product} $f \hattimes g$ is defined as the unique dotted map in the diagram
\[
\xymatrix{
Y \times B \ar[r] \ar[d] &  X \times B \ar[d] \ar@/^1.5pc/[ddr] \\
Y \times A \ar[r]  \ar@/_1.5pc/[drr] & \displaystyle \big( Y \times A ) +_{Y \times B} \big( X \times B ) \ar@{.>}[dr]  \\
 & & X \times A \, . }
 \]
The pushout-product property is  the statement that  if $f$ and $g$ are cofibrations then so is $f \hattimes g$
and, if additionally either $f$ and $g$ is a weak equivalence, then so is $f \hattimes g$.
Note that when~$f$ and~$g$ are monomorphisms, so in particular when they are cofibrations, the pushout in the diagram above is just an union of subobjects and the pushout product of~$f$ and~$g$ is just the inclusion
\[  
f \hattimes g \co (Y \times A) \cup (X \times B) \rightarrow X \times A \, .
\]
Dually, using  exponentials instead of products and pullbacks instead of pushouts, for maps $f \co Y \rightarrow X$ and~$p \co B \rightarrow A$, the \myemph{pullback exponential}  $\langle f \, , p \rangle$ is defined as the unique dotted arrow in the diagram:
\[
\xymatrix{
 B^X \ar@{.>}[dr] \ar@/^1.5pc/[drr] \ar@/_1.5pc/[ddr] \\
& B^Y \times_{A^Y} A^X \ar[r] \ar[d] &  A^X \ar[d]  \\
& B^Y \ar[r] & A^Y  \, . \\
 }
 \]
By adjointness (see~\cite{joyal-tierney-segal} for details), $\langle p \, , q \rangle $ has the right lifting property against a map $f$  if and only if $q$ has the right lifting property agains $f \hattimes p$.
Therefore, the pushout-property  implies its dual version, asserting that  if $f \co Y \to X$ is a cofibration and $p \co B \to A$ a fibration then $\langle f \, , p \rangle$ is a fibration and if, additionally,  either $p$ or $f$ is a weak equivalence, then so so is $\langle f \, , p \rangle$.

\medskip

We also make use of another weak factorisation system $(\mathsf{L}, \mathsf{R})$ on $\SSet$ introduced in~\cite[Section~3.1]{henry2019qms}. This is useful to establish decidability conditions; in particular, we will use it to prove~\cref{proposition:PathObjectCofibrant} and~\cref{Lemma:ForTheExtProperty} below. By definition, $(\mathsf{L}, \mathsf{R})$ is the
weak factorisation system cofibrantly generated by the set of degeneracy maps $\sigma \co \Delta[m] \to \Delta[n]$, \ie the maps induced by surjections $[m] \rightarrow [n]$ in $\Delta$. We refer to maps in $\mathsf{L}$ as the \emph{degeneracy quotients} and the
maps in $\mathsf{R}$ as the \emph{degeneracy-detecting} maps. Since degeneracy maps are (split) epimorphisms in $\SSet$, the weak factorization system $(\mathsf{L}, \mathsf{R})$ is actually a unique factorization system. Since the degeneracy-detecting maps are the maps with the (unique) right lifting property against degeneracy map,  they are exactly the simplicial morphisms $f \co X \rightarrow Y$ such that for $x \in X$, $f(x)$ is degenerated if and only if $x$ is degenerated. The degeneracy quotient maps are
instead the pushouts of coproducts of degeneracy maps, \ie the maps of the form $X \rightarrow X[(x_i,\sigma_i])$ where $x_i \in X_{n-1}$ are a family of cells,  $\sigma \co [n_i] \twoheadrightarrow [m_i]$ a family of degeneracies and $X[(x_i,\sigma_i)]$ is obtained from~$X$ by freely making $x_i$ in the image of $\sigma_i $ for each $i$. For the convenience of the reader, we recall from~\cite{henry2019qms} 
 the results  about this factorization system that will be used here.

\begin{lemma}[{\cite[Lemma~3.1.8]{henry2019qms}}]
\label{lem:decidability_lift_degen_quo}
Let $p \co A \rightarrow B$ be a degeneracy quotient between finite decidable simplicial set, $X$  a cofibrant simplicial set and $f \co
A \rightarrow X$ be any morphism. Then it is decidable whether $f$ can be factored through $p$ or not.  \qed
\end{lemma}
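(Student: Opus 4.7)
The plan is to reduce the factorisation problem to a decidable combinatorial question inside $\Delta$, using cofibrancy of $X$ to bypass the need for general equality in~$X$.

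First I would exploit the fact that the weak factorization system $(\mathsf{L}, \mathsf{R})$ is cofibrantly generated by single degeneracy maps. Since $A$ and $B$ are finite and decidable, the map $p$ can be presented as a finite iterated pushout $A = A_0 \to A_1 \to \cdots \to A_N = B$, where each step is a pushout of a single degeneracy $\sigma_k \co \Delta[m_k] \to \Delta[n_k]$ along a cell $x_k$ of the intermediate (again finite decidable) simplicial set $A_k$. An induction on $N$ then reduces the lemma to the base case of a single such pushout $B = A[(x, \sigma)]$.

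For a single pushout, the universal property translates the existence of a factorisation of $f \co A \to X$ through $p$ into the question: does $f(x) \in X_m$ lie in the image of the degeneracy operator $\sigma^* \co X_n \to X_m$? Here cofibrancy of $X$ is decisive. The Eilenberg-Zilber lemma together with decidability of degeneracy in $X$ allows us to effectively compute the unique decomposition $f(x) = \tau^*(y)$, with $\tau \co [m] \twoheadrightarrow [k]$ a surjection in $\Delta$ and $y \in X_k$ non-degenerate. A short uniqueness argument then shows that $f(x) \in \mathrm{im}(\sigma^*)$ if and only if $\tau$ factors through $\sigma$ in $\Delta$: any witness $f(x) = \sigma^*(z)$ with Eilenberg-Zilber decomposition $z = \rho^*(y')$ forces $\tau = \rho \cc \sigma$ and $y = y'$ by uniqueness, while conversely such a factorisation visibly produces a preimage $\rho^*(y)$.

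The last step is purely combinatorial: whether a surjection $\tau$ between finite total orders factors through another surjection $\sigma$ is decidable by inspection of order-preserving maps in $\Delta$. The main obstacle in the outline is the first step --- extracting from the abstract description of $p$ as a map in $\mathsf{L}$ a finite list of explicit generators $(x_k, \sigma_k)$ witnessing it as an iterated pushout --- but this follows routinely from the saturation definition of $\mathsf{L}$ together with the finiteness and decidability hypotheses on $A$ and $B$, since only finitely many new identifications of cells occur.
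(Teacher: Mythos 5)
Your proposal is correct and follows essentially the same route as the paper's own (brief) outline: present the degeneracy quotient via a finite family of pairs $(a_i,\sigma_i)$, reduce to deciding for each $i$ whether $f(a_i)$ lies in the image of $\sigma_i^*$, and use cofibrancy of $X$ together with the Eilenberg--Zilber decomposition to decide this by a purely combinatorial factorisation check in $\Delta$. The only cosmetic difference is that you organise the finitely many checks as an induction over single-degeneracy pushouts (verifying along the way that the intermediate objects stay finite and decidable), whereas the paper writes $B = A[(a_i,\sigma_i)]$ as one pushout along a coproduct and checks all $i$ at once.
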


The proof of this result above can be outlined in a few words. Since $B = A[(a_i,\sigma_i)]$ for a finite collection of cells $a_i$,  $f$ factors through $p$ if and only if $f(a_i) =\sigma_i^*  x_i$ for each $i$, but when $X$ is cofibrant one can decide for each $i$ if this is the case or not. As there is only a finite number of such $i$, one can decide if it is the case for all $i$ or not. See

\begin{proposition}[{\cite[Proposition 3.1.11]{henry2019qms}}]
\label{prop:Degen_quotient_are_pullback_stable}
The class of degeneracy quotients is stable under pullback.  \qed
\end{proposition}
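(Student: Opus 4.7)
To prove pullback stability of $\mathsf{L}$, I would exploit that $\SSet$ is locally cartesian closed. Each pullback functor $g^* \co \SSet_{/B} \to \SSet_{/B'}$ is then a left adjoint (with right adjoint $\Pi_g$), so it preserves all small colimits, in particular coproducts and pushouts. By the explicit description of $\mathsf{L}$ recalled above, any degeneracy quotient $p \co A \to B$ fits into a pushout square
\[
\xymatrix{
\coprod_i \Delta[m_i] \ar[r] \ar[d]_{\coprod_i \sigma_i} & A \ar[d]^{p} \\
\coprod_i \Delta[n_i] \ar[r] & B
}
\]
for some family of degeneracies $\sigma_i \co \Delta[m_i] \twoheadrightarrow \Delta[n_i]$. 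Pulling this entire square back along $g$ and using that $g^*$ preserves coproducts and pushouts, $g^* p$ is exhibited as a pushout of the coproduct of the individual pullbacks $g^* \sigma_i$. Since $\mathsf{L}$ is closed under pushouts and coproducts, being the left class of a weak factorization system, it therefore suffices to show that the pullback of any single degeneracy $\sigma \co \Delta[m] \twoheadrightarrow \Delta[n]$ along an arbitrary morphism $f \co Y \to \Delta[n]$ again lies in $\mathsf{L}$.

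Because $\mathsf{L}$ is closed under composition and pullbacks preserve composition, I would further reduce to the case of an elementary degeneracy $s_j \co \Delta[k+1] \to \Delta[k]$. For an arbitrary $f \co Y \to \Delta[k]$, I would work directly with $P \defeq Y \times_{\Delta[k]} \Delta[k+1]$: a $p$-simplex of $P$ is a pair $(y, \tau)$ with $y \in Y_p$ and $\tau \co [p] \to [k+1]$ satisfying $s_j \tau = f(y)$. Over each $y$ the fibre has $|f(y)^{-1}(j)| + 1$ elements, corresponding to distributions of preimages of $j$ between the values $j$ and $j+1$, and $P \to Y$ is a split epimorphism with section coming from the face $d^j \co \Delta[k] \to \Delta[k+1]$. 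I would then present $P \to Y$ explicitly as a pushout of degeneracy generators, one for each cell of $Y$ whose image under $f$ meets the index $j$.

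The main obstacle is this final explicit pushout construction in the elementary case: organising the extra lifts over each cell of $Y$ into an orderly sequence of pushouts of generating degeneracies. I would proceed by induction on dimension and take care not to invoke cofibrancy of $Y$, since the statement is formulated for arbitrary simplicial sets $Y$ and the Eilenberg--Zilber lemma is only available for cofibrant ones. Once the pushout diagram is set up, the verification that it is isomorphic to $P$ over $Y$ becomes a routine levelwise check.
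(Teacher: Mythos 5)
The statement is not proved in the paper: it is quoted directly from \cite[Proposition~3.1.11]{henry2019qms} with a \qed, so there is no in-text argument to compare against; I will assess your proposal on its own.

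Your reduction is sound and matches what the cited result must establish. Pullback functors in the presheaf topos $\SSet$ are left adjoints, hence preserve the pushout square presenting a degeneracy quotient and distribute over the coproduct, so $g^*p$ is a pushout of $\coprod_i g^*\sigma_i$; since $\mathsf{L}$ is closed under coproducts, pushouts and composition, and since every degeneracy factors through elementary ones, the problem does reduce to showing that the pullback of an elementary degeneracy $s_j \co \Delta[k+1] \to \Delta[k]$ along an arbitrary $f \co Y \to \Delta[k]$ lies in $\mathsf{L}$. Your fibre computation over a $p$-cell $y$ (there are $|f(y)^{-1}(j)|+1$ simplices in the fibre, and the map splits via $d^j$) is also correct.

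The gap is exactly where you flag it, and your sketch of the closing step is off in a way that matters. The cells one pushes out at are not cells of $Y$ but cells of $P$, and they sit one dimension higher; moreover there is not one per cell but $r = |f(y)^{-1}(j)|$ per $p$-cell $y$. Explicitly, writing $f(y)^{-1}(j) = \{a_1 < \cdots < a_r\}$, for each $1 \le \ell \le r$ one considers the $(p{+}1)$-cell $(s_{a_\ell}^*y, \hat\tau_\ell)$ of $P$, where $\hat\tau_\ell$ sends $a_\ell \mapsto j$ and $a_\ell{+}1 \mapsto j{+}1$; pushing out along $s_{a_\ell}$ at this cell forces its $a_\ell$-th and $(a_\ell{+}1)$-th faces, which are the two adjacent threshold simplices over $y$, to coincide. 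Even with the correct generators, showing that the resulting quotient of $P$ is precisely $Y$ and not something smaller is the whole content of the claim, so calling it ``a routine levelwise check'' undersells it.

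You can bypass the explicit cell attachment by using that $(\mathsf{L},\mathsf{R})$ is an orthogonal factorisation system with $\mathsf{R}$ the degeneracy-detecting maps. Factor $q \co P \to Y$ as $P \xrightarrow{u} Z \xrightarrow{r} Y$ with $u \in \mathsf{L}$ and $r \in \mathsf{R}$. Since $q$ is split epi, so is $r$; it is enough to show $r$ is monic, for then it is an isomorphism and $q \in \mathsf{L}$. Given adjacent threshold simplices $(y,\tau)$ and $(y,\tau')$ over the same $y$, the cell $(s_{a_\ell}^*y, \hat\tau_\ell)$ maps under $u$ to a cell $z'$ of $Z$ whose image in $Y$ is $s_{a_\ell}^*y$, which is degenerate; by the unique right lifting property of $r$ against $s_{a_\ell}$, $z'$ is $s_{a_\ell}$-degenerate, so its $a_\ell$-th and $(a_\ell{+}1)$-th faces agree, \ie $u(y,\tau)=u(y,\tau')$. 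By transitivity all simplices of $P$ over a fixed $y$ have the same image in $Z$, so $r$ is a levelwise injection. This argument makes no appeal to decidability or cofibrancy of $Y$, which, as you rightly note, must be avoided.
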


By~\cref{prop:Degen_quotient_are_pullback_stable}, if $f \co A \rightarrow B$ is a degeneracy quotient  then $f \times X \co A \times X \rightarrow B \times X$ is again a degeneracy quotient for every $X$.

\section{The comprehension category of cofibrant simplicial sets}
\label{sec:basrp}

The main goal of this section is to introduce the comprehension category of cofibrant simplicial sets. The
results of simplicial homotopy theory that we obtain in the following sections will show how this 
comprehension category supports various type-theoretic constructions. We refer to~\cite{JacobsB:catltt} for the definition of a comprehension category and basic results and to~\cite{LumsdaineP:locuoc} for the
definitions of the categorical counterparts of the type-theoretic constructs.

\begin{lemma}\label{lem:cofibrant_fiber_product} \hfill 
\begin{enumerate}[$(i)$] 
\item Let $A \, , B$ be cofibrant simplicial sets. Then their product $A \times B$ is cofibrant.
\item Let $f \co A \to X$ and $g \co B \to X$ be maps with  cofibrant domain. Then their
fiber product $A \times_X B$, fitting in the pullback diagram
\[
\xymatrix{
A \times_X B \drpullback \ar[r]^-{q} \ar[d]_-{p} & B \ar[d]^g \\
A \ar[r]_f & X \, ,}
\]
is cofibrant.
\end{enumerate}
\end{lemma}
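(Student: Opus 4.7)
The plan is as follows. For part~(i), I would simply invoke the pushout-product property for the weak factorisation system $(\Cof, \TrivFib)$ recalled in~\cref{sec:preliminaries}. The pushout product of the two cofibrations $\emptyset \to A$ and $\emptyset \to B$ is precisely the map $\emptyset \to A \times B$, so cofibrancy of $A$ and $B$ immediately yields cofibrancy of $A \times B$.

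For part~(ii), using (i) I would present $A \times_X B$ as a subobject of the cofibrant simplicial set $A \times B$ via the canonical map $(p,q)$, and then transfer decidability of degeneracy from $A \times B$ to $A \times_X B$. The key observation is that for any simplicial set $X$ and any degeneracy $\sigma \co [n] \twoheadrightarrow [m]$ in $\Delta$, the function $\sigma^* \co X_m \to X_n$ is injective: degeneracies are split epimorphisms in $\Delta$, and are therefore sent by $X$ to split monomorphisms. Consequently, if $(a,b) \in (A \times_X B)_n$ and we have a factorisation $(a,b) = \sigma^*(a',b')$ in $A \times B$ with $\sigma$ a non-trivial degeneracy, applying $f$ and $g$ gives $\sigma^* f(a') = f(a) = g(b) = \sigma^* g(b')$, so injectivity of $\sigma^*$ on $X$ yields $f(a') = g(b')$ and hence $(a',b') \in A \times_X B$ as well. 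Thus a simplex of $A \times_X B$ is degenerate in $A \times_X B$ if and only if its image is degenerate in $A \times B$, and since the latter is decidable by~(i), so is the former, giving cofibrancy of $A \times_X B$.

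I do not anticipate a serious obstacle: once one notices the injectivity of $\sigma^*$ on an arbitrary simplicial set, the argument for (ii) is essentially mechanical, and (i) is immediate. The conceptual point I would want to emphasise is that cofibrancy of $A \times_X B$ is \emph{not} being deduced by showing that the inclusion $A \times_X B \hookrightarrow A \times B$ is itself a cofibration---which would require decidability of the equation $f(a) = g(b)$ in $X$ and hence, implicitly, a decidability hypothesis on $X$---but rather purely from decidability of degeneracy \emph{internal} to $A \times_X B$. Accordingly no cofibrancy assumption on $X$ is required, which is important given that in later applications $X$ will typically fail to be cofibrant.
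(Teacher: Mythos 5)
Your proposal is correct and follows the same overall strategy as the paper's proof: part~(i) is the pushout-product/cartesianity of the model structure applied to $\emptyset \to A$ and $\emptyset \to B$, and part~(ii) reduces decidability of degeneracy in $A \times_X B$ to decidability in $A \times B$ via the inclusion $(p,q)$. The one place where you supply more detail than the paper is the ``if and only if'' step: the paper simply asserts that a cell of $A \times_X B$ is degenerate there iff it is degenerate in $A \times B$, and you justify this by applying $f,g$ and using injectivity of $\sigma^*$ on~$X$ to force $(a',b')$ back into the fiber product. That argument is correct, but it is worth knowing that the fact you are proving holds for an arbitrary subsimplicial set $S \subseteq T$ and does not use the fiber-product structure at all: if $x \in S_n$ and $x = \sigma^*(y)$ with $y \in T_m$ for a degeneracy $\sigma \co [n] \twoheadrightarrow [m]$, then taking any section $\delta \co [m] \to [n]$ of $\sigma$ gives $y = \delta^*\sigma^*(y) = \delta^*(x) \in S_m$, since $S$ is closed under the simplicial operators. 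Both routes hinge on the split-epi structure of degeneracies; yours packages it as injectivity of $\sigma^*$ on $X$, the general one as recovering the degenerating cell as a face of~$x$. Your closing remark is a good one and matches the paper's intent: the lemma deliberately places no cofibrancy hypothesis on~$X$, and the argument must (and does) avoid any appeal to decidability of equality in~$X$.
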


\begin{proof} Part~(i) follows immediately from the fact that the model structure on simplicial sets is cartesian, i.e. it satisfies the pushout-product conditions.


For part~(ii), $A \times_X B$ is a sub-simplicial set of $A \times B$ hence a cell of~$A \times_X B$ is degenerate if and only if it is degenerate as a cell of $A \times B$, hence degeneracy
in~$A \times_X B$ is indeed decidable. 
\end{proof}

The next proposition introduces the  comprehension category  cofibrant simplicial sets.
Recall that we write~$\SSet_{\mathsf{cof}}$ for the full subcategory of  $\SSet$ spanned by
cofibrant simplicial sets. We then define $\mathbf{Fib}_{ \mathsf{cof}}$ to be the category that 
has fibrations between cofibrant objects as objects and commutative squares as maps. Here,
recall from the Introduction that we take fibrations to be pairs consisting of a map
and a function providing fillers for lifting problems against horn inclusions.

\begin{proposition} \label{thm:compcat}
The category $\SSet_{\mathsf{cof}}$ is the base of a comprehension category
of the form
\begin{equation*}
\begin{gathered}
\xymatrix{
\mathbf{Fib}_{ \mathsf{cof}} \ar[dr]_{p} \ar[rr]^{\chi} & & \SSet^{\to}_{\mathsf{cof}} \ar[dl]^{\mathrm{cod}} \\ 
 & \SSet_{\mathsf{cof}} \, , &  }
 \end{gathered}
 \end{equation*}
 where $\chi$ is the evident forgetful functor.
\end{proposition}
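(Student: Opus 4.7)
The plan is to unpack the three pieces of data/axioms defining a comprehension category in the sense of~\cite{JacobsB:catltt}: namely, that $\chi$ is a well-defined functor, that the composite $p \defeq \cod \cc \chi$ is a Grothendieck fibration, and that $\chi$ sends $p$-cartesian morphisms to pullback squares in $\SSet_{\mathsf{cof}}$. Per the Remarks on Constructivity above, morphisms between fibrations are mere commutative squares with no compatibility with the chosen diagonal fillers, so $\mathbf{Fib}_{\mathsf{cof}}$ is unproblematically a category and $\chi$ is manifestly a functor into $\SSet^{\to}_{\mathsf{cof}}$ (using that both the domain and codomain of an object of $\mathbf{Fib}_{\mathsf{cof}}$ lie in $\SSet_{\mathsf{cof}}$ by definition).

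Next I would construct cartesian lifts showing that $p$ is a Grothendieck fibration. Given an object $q \co A \fib X$ of $\mathbf{Fib}_{\mathsf{cof}}$ and a morphism $f \co Y \to X$ in $\SSet_{\mathsf{cof}}$, form the pullback square computed in $\SSet$. The pullback map $q[f] \co A[f] \to Y$ is a fibration since fibrations are closed under pullback (they are characterised by a right lifting property, and the structure of chosen horn-fillers pulls back canonically). Its codomain $Y$ is cofibrant by hypothesis, and its domain $A[f]$ is cofibrant by~\cref{lem:cofibrant_fiber_product}(ii), since $A$ and $Y$ are both cofibrant. Hence $q[f]$ is an object of $\mathbf{Fib}_{\mathsf{cof}}$, and the pullback square provides a morphism $q[f] \to q$ in $\mathbf{Fib}_{\mathsf{cof}}$ over $f$. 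That this morphism is $p$-cartesian is then a direct consequence of the universal property of the pullback in $\SSet_{\mathsf{cof}}$, combined again with the observation that morphisms in $\mathbf{Fib}_{\mathsf{cof}}$ carry no data beyond the underlying commutative square.

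Finally, the fact that $\chi$ preserves cartesian morphisms is automatic: the cartesian lifts just constructed are, by definition, sent by $\chi$ to the pullback squares in $\SSet_{\mathsf{cof}}$ used to build them. The main (and only) nontrivial point is the closure of $\SSet_{\mathsf{cof}}$ and $\mathbf{Fib}_{\mathsf{cof}}$ under the relevant pullbacks, and this has already been handled by~\cref{lem:cofibrant_fiber_product}; everything else is a formal verification of categorical data. I therefore do not anticipate any serious obstacle, and the argument should be short.
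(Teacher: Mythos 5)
Your proposal is correct and follows essentially the same route as the paper: the key input is \cref{lem:cofibrant_fiber_product}(ii) to keep pullbacks inside $\SSet_{\mathsf{cof}}$, together with the observation that a pullback of a fibration with chosen horn-fillers again carries chosen fillers, so that $p = \mathrm{cod} \circ \chi$ is a Grothendieck fibration and $\chi$ evidently preserves cartesian (pullback) squares. The paper phrases this slightly more compactly by first noting that $\mathrm{cod}$ on $\SSet^{\to}_{\mathsf{cof}}$ is a Grothendieck fibration and then composing with $\chi$, but the content is identical.
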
 

\begin{proof} The category $\SSet^\to_{\mathsf{cof}}$ is the arrow category of $\SSet_{\mathsf{cof}}$, whose objects are
maps  between cofibrant objects. Then, the codomain
functor 
\[
\mathrm{cod} \co \SSet^{\to}_{\mathsf{cof}} \to \SSet_{\mathsf{cof}}
\] 
is a Grothendieck fibration by part~(ii) of~\cref{lem:cofibrant_fiber_product}. We then define $p = \mathrm{cod} \circ \chi$, which
 is also a Grothendieck fibration, since the pullback of a fibration (with chosen lifts) is a fibration (with chosen lifts).
  Since $\chi$ is the forgetful functor, it clearly preserves pullbacks.
\end{proof}

\begin{remark} \label{thm:sigma-types} It is immediate to see that the comprehension category of cofibrant simplicial sets supports $\Sigma$-types. since for a fibration $p \co A \to X$ between cofibrant objects, dependent sum along $p$  functor~$\Sigma_p \co \SSet_{/A} \to \SSet_{/X}$, which is defined by
composition with $p$, maps fibrations to fibrations.
\end{remark}

\medskip

We conclude this section with some  results on cofibrant objects and cofibrations
that will be useful in the following.

\begin{proposition} \label{thm:cof-pbk}  Let $p \co A \to X$  be a map with cofibrant domain.
Then pullback along $p$, \[
p^* \co \SSet_{/X} \to \SSet_{/A} \, , 
\]
preserves cofibrations. 
\end{proposition}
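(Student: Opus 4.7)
The plan is to exploit the cofibrant generation of the weak factorisation system $(\Cof, \TrivFib)$ by the set of boundary inclusions $\cal{I} = \{\inc{n} \co \Sphere{n} \to \Delt{n} \mid n \geq 0\}$, so that every cofibration is a retract of a transfinite composition of pushouts of maps in $\cal{I}$. Since the pullback functor $p^* \co \SSet_{/X} \to \SSet_{/A}$ admits both a left adjoint $\Sigma_p$ (given by composition with $p$) and a right adjoint $\Pi_p$ (because $\SSet$ is locally cartesian closed), it preserves all colimits, and it clearly preserves retracts. Hence it suffices to verify that for every map $\Delt{n} \to X$ the pulled-back boundary inclusion
\[
p^*(\inc{n}) \co A \times_X \Sphere{n} \to A \times_X \Delt{n}
\]
is a cofibration in $\SSet$.

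For this, I would first observe that $\Delt{n}$ and $\Sphere{n}$ are cofibrant, since they are finite simplicial sets whose degeneracies are decidable. Applying \cref{lem:cofibrant_fiber_product}(ii) with the cofibrant object $A$ together with the given maps $\Delt{n} \to X$ and $\Sphere{n} \to X$, both $A \times_X \Delt{n}$ and $A \times_X \Sphere{n}$ are cofibrant. Moreover, the class of levelwise complemented monomorphisms is stable under pullback, so $p^*(\inc{n})$ is a levelwise complemented monomorphism.

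As recalled in the preliminaries, a map between cofibrant simplicial sets is a cofibration precisely when it is a levelwise complemented monomorphism; therefore $p^*(\inc{n})$ is a cofibration, and the closure of $\Cof$ under pushouts, transfinite composition, and retracts then yields the general case via the saturation argument above. The only genuinely substantive step is the appeal to \cref{lem:cofibrant_fiber_product}(ii): without cofibrancy of $A$, the pulled-back domain $A \times_X \Sphere{n}$ need not be cofibrant, and the complemented-monomorphism characterisation of cofibrations between cofibrant objects would not apply. This is precisely where the hypothesis on the domain of $p$ enters, and I expect it to be the main conceptual point rather than a technical obstacle.
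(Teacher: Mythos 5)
Your argument is correct, but it takes a considerably longer route than the paper does, and it relies on some machinery the paper avoids. The paper's proof is a direct two-line decidability check: for a cofibration $f \co Y \to X$, the pullback $A[f] \rightarrowtail A$ is a levelwise complemented monomorphism (levelwise complemented monomorphisms are stable under pullback, since $a \in A[f]_n$ iff $p(a) \in Y_n$, which is decidable), and since $A$ is cofibrant, degeneracy is decidable in $A_n$ and hence in $A_n \setminus A[f]_n$; so $A[f] \rightarrowtail A$ satisfies the criterion of \cite[Proposition~5.1.4]{henry2018wms} outright. No saturation argument, no appeal to \cref{lem:cofibrant_fiber_product}, and no reduction to boundary inclusions is needed. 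Your route instead invokes cofibrant generation of $(\Cof, \TrivFib)$, the cocontinuity of $p^*$, and \cref{lem:cofibrant_fiber_product}(ii); this works, but you are also implicitly using the fact that the cofibrations in $\SSet_{/X}$ coincide with the saturation, \emph{computed in the slice}, of the boundary inclusions equipped with all possible structure maps $\Delta[n] \to X$. That slicing fact is true, but the paper never records it, and constructively it deserves a word of justification (via the small object argument in the slice and a retract argument, using that the forgetful functor $\SSet_{/X} \to \SSet$ creates colimits and reflects trivial fibrations). Finally, the paper's proof makes visible exactly where the hypothesis enters: it is the cofibrancy of the \emph{codomain} $A$ of the pulled-back map that is used, not cofibrancy of both endpoints as in the ``map between cofibrant objects'' criterion you cite; your appeal to \cref{lem:cofibrant_fiber_product}(ii) to secure cofibrancy of $A \times_X \partial\Delta[n]$ is therefore doing more work than strictly required.
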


\begin{proof} Let $f \co Y \to X$ be a cofibration and consider the pullback
\[
\xymatrix{
A[f]  \ar@{>->}[r] \drpullback \ar[d]_-{p[f]} &  A \ar[d]^{p} \\
Y \ar@{>->}[r]_{f} \ & X \, .}
\]
We need to show that the monomorphism $A[f] \rightarrowtail A$ is a cofibration. 
Since $A$ is cofibrant, it suffices to show that it is a levelwise complemented. For $a \in A_n$, we have that $a \in A[f]_n$ if and only if $p(a) \in Y_n$. Since $f \co Y \rightarrowtail X$ is a levelwise complemented monomorphism, this is decidable.
\end{proof} 

Here and thoughout the paper, a {\em finite set} means a set equipped with a bijection with $\{1 \, , \ldots,  n \}$, for some $n \in \mathbb{N}$. Equality in such a set
is always decidable, which we sometimes stress by saying `finite decidable set', rather than just `finite set' to distinguishes it from other notion of finite set, like Kuratowski finiteness.

We say that a simplicial set $X$ is finite if $X([n])$ is a finite decidable set for each $n$ and if $X$ has a finite number of non-degenerate cell in total.

\begin{lemma} \label{prop:X^kCofibrant}  \hfill 
\begin{enumerate}[$(i)$]
\item Let $X$ be cofibrant and $K$ finite decidable simplicial set. Then $X^K$ is cofibrant.
\item Let $f \co A \rightarrow B$ be a degeneracy quotient of finite decidable simplicial sets and $X$  cofibrant. Then $X^f \co X^B \rightarrow X^A$ is a cofibration between cofibrant objects.
\end{enumerate}
\end{lemma}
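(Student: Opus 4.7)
The plan is to derive both parts as direct applications of the decidability result \cref{lem:decidability_lift_degen_quo}, combined with the pullback-stability of degeneracy quotients from \cref{prop:Degen_quotient_are_pullback_stable}. The key preliminary observation is that if $K$ is finite and decidable, then so is $\Delta[n] \times K$ for every $n$, so \cref{lem:decidability_lift_degen_quo} is applicable with such products as its source and target.

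For part $(i)$, I unfold cofibrancy as decidability of degeneracy at each level. An $n$-simplex of $X^K$ is a map $\phi \co \Delta[n] \times K \to X$, and $\phi$ is degenerate if and only if there exist $m < n$, a non-identity surjection $\sigma \co [n] \twoheadrightarrow [m]$, and a map $\psi \co \Delta[m] \times K \to X$ with $\phi = \psi \circ (\sigma \times K)$. By \cref{prop:Degen_quotient_are_pullback_stable}, each $\sigma \times K$ is a degeneracy quotient between finite decidable simplicial sets, so \cref{lem:decidability_lift_degen_quo} makes it decidable whether $\phi$ factors through $\sigma \times K$ for any fixed $\sigma$. Since there are only finitely many non-identity surjections $[n] \twoheadrightarrow [m]$ with $m < n$, decidability of the resulting finite disjunction gives decidability of degeneracy of $\phi$.

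For part $(ii)$, part $(i)$ immediately gives that $X^A$ and $X^B$ are cofibrant, so by the characterisation of cofibrations between cofibrant objects recalled in \cref{sec:preliminaries} it suffices to show that $X^f$ is a levelwise complemented monomorphism. For each $n$, \cref{prop:Degen_quotient_are_pullback_stable} makes $\Delta[n] \times f$ a degeneracy quotient, hence in particular an epimorphism; since precomposition with an epimorphism is injective, $X^f$ is injective on $n$-simplices. For complementation, a cell $\phi \co \Delta[n] \times A \to X$ of $X^A$ lies in the image of $X^f_n$ if and only if $\phi$ factors through $\Delta[n] \times f$, which is decidable by \cref{lem:decidability_lift_degen_quo} since both source and target of $\Delta[n] \times f$ are finite decidable and $X$ is cofibrant.

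No step presents a real obstacle: the argument is essentially a careful unpacking of the definition of cofibrancy in terms of decidable degeneracies, combined with the two already-established results. The only matters that require mild care are the combinatorial finiteness claims, namely that the set of non-identity surjections $[n] \twoheadrightarrow [m]$ with $m < n$ is finite and that products of the form $\Delta[n] \times K$ remain finite and decidable, but both verifications are routine.
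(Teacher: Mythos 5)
Your proposal is correct and follows essentially the same route as the paper: degeneracy of a cell of $X^K$ (respectively membership of a cell of $X^A$ in $X^B$) is rephrased as factoring a map $\Delta[n]\times K\to X$ through $\sigma\times K$ (respectively through $\Delta[n]\times f$), which is decidable by \cref{lem:decidability_lift_degen_quo} since these are degeneracy quotients between finite decidable simplicial sets by \cref{prop:Degen_quotient_are_pullback_stable}. Your additional explicit steps (the finite disjunction over surjections, and checking that $X^f$ is a levelwise complemented monomorphism between cofibrant objects) merely spell out details the paper leaves implicit.
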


\begin{proof}  For part~(i), recall that an $n$-cell $\Delta[n] \rightarrow X^K$ is a morphism $\Delta[n] \times K \rightarrow X$. Now let $\sigma \co \Delta[n] \rightarrow \Delta[m]$ be a degeneracy map. Then the map 
$\sigma \times K \co \Delta[n] \times K \rightarrow \Delta[m] \times K$ is again degeneracy quotient.
Since it is a map between finite decidable simplicial sets, the question of whether a map $\Delta[n] \times K \rightarrow X$ factors through $\Delta[n] \times K \rightarrow \Delta[m] \times K$ is decidable
by~\cref{lem:decidability_lift_degen_quo}. But this is exactly the question of degeneracy of cells in $X^K$.
Similarly, for part (ii), since an $n$-cell of $X^A$ is a morphism $\Delta[n] \times A \rightarrow X$, it belongs to $X^B$ if and only it can be factored as $\Delta[n] \times A \rightarrow \Delta[n] \times B \rightarrow X$. This is decidable by \cref{lem:decidability_lift_degen_quo} because $\Delta[n] \times A \rightarrow \Delta[n] \times B$ is a degeneracy quotient between finite decidable simplicial sets by part (i).
\end{proof}

\section{Identity types as path spaces}
\label{sec:pats}

In this section we begin to show how the the comprehension category of cofibrant simplicial
sets introduced in \cref{thm:compcat} supports 
various type-theoretic constructs by considering identity types. Following the fundamental insight in~\cite{awodey-warren:homotopy-idtype}, in order to equip the comprehension category  with identity types it suffices to show that
for every fibration $p \co A \to X$ with cofibrant domain, there are a  fibration with cofibrant domain~$\partial \co \Id_A \to A \times_X A$ and a trivial cofibration $\mathsf{refl}_A \co A \to \Id_A$ that provide a factorisation of the diagonal map $\delta_p \co A \to A \times_X A$~\cite{LumsdaineP:locuoc}. While such a factorisation is guaranteed to exist by the model structure on $\SSet$, here we show that it can be obtained by letting~$\Id_A$ be a mapping path space. 
This is useful in order to support the intuition of elements of identity types as paths, which is a central idea in Homotopy Type Theory~\cite{hottbook}, to ensure pullback stability
of the construction, as well as preservation of (fibrewise) smallness.

\begin{lemma} \hfill 
 \label{thm:exponentials}
\begin{enumerate}[(i)] 
\item Let $X$ be cofibrant and $A$ be fibrant.  Then $A^X$ is fibrant.
\item Let $f \co Y \rightarrow X$ be a cofibration and $A$ be fibrant. Then $A^f \co A^X \rightarrow A^Y$ is a fibration.
\item Let $f \co Y \rightarrow X$ be a trivial cofibration and $A$ be fibrant.  Then $A^f \co A^X \rightarrow A^Y$ is a trivial fibration.
\item Let $X$ be cofibrant and $p \co B \rightarrow A$ be a (trivial) fibration. Then $p^X \co B^X \rightarrow A^X$ is also a (trivial) fibration.
\end{enumerate}
\end{lemma}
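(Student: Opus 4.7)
All four parts are uniform applications of the pushout-product property recalled in~\cref{sec:preliminaries}, together with its equivalent reformulation via the adjunction between products and exponentials: a map $p$ has the right lifting property against $g \hattimes f$ if and only if the pullback-exponential $\langle f \, , p \rangle$ has the right lifting property against $g$. The plan, in each case, is to identify the map under consideration with an appropriate pullback-exponential, transpose a generating lifting problem across the adjunction, and invoke the pushout-product property to conclude that the transposed left-hand map is a (trivial) cofibration against which the remaining right-hand map lifts by hypothesis.

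For part~(i), I would note that $A^X \to 1 = \langle \emptyset \to X \, , A \to 1 \rangle$. A lifting problem against a horn $h^k_n \in \cal{J}$ then transposes to a lifting problem of $A \to 1$ against $h^k_n \hattimes (\emptyset \to X)$; since $X$ is cofibrant, $\emptyset \to X \in \Cof$, so the pushout-product property places $h^k_n \hattimes (\emptyset \to X)$ in $\TrivCof$, and fibrancy of $A$ supplies the lift.

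For parts~(ii) and~(iii) I would write $A^f = \langle f \, , A \to 1 \rangle$. For~(ii), lifts against $h^k_n$ transpose to lifts of $A \to 1$ against $h^k_n \hattimes f$, which lies in $\TrivCof$ because $f$ is a cofibration and $h^k_n$ is a trivial cofibration. For~(iii), lifts against a boundary inclusion $i^n \in \cal{I}$ transpose to lifts against $i^n \hattimes f$, again in $\TrivCof$ since now $f \in \TrivCof$. Part~(iv) I would handle analogously via $p^X = \langle \emptyset \to X \, , p \rangle$: lifting against $h^k_n$ transposes to $p$ lifting against the trivial cofibration $h^k_n \hattimes (\emptyset \to X)$, giving the fibration case; and when $p$ is additionally a trivial fibration, lifting against $i^n$ transposes to $p$ lifting against the cofibration $i^n \hattimes (\emptyset \to X)$, giving the trivial fibration case.

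I do not anticipate any real obstacle here: the entire lemma is a formal consequence of the pushout-product property together with the fact that $\emptyset \to X \in \Cof$ whenever $X$ is cofibrant. The only care required is the constructive convention recorded in~\cref{sec:preliminaries}, that fibrations and trivial fibrations come equipped with chosen filler functions; accordingly the lifts produced in each step should be constructed by explicit evaluation of those filler functions rather than merely asserted to exist, but this does not alter the structure of the argument.
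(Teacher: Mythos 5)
Your proof is correct and takes essentially the same approach as the paper, which simply states that all four claims follow from the pushout-product property; you have correctly unpacked that remark via the standard adjunction between pushout-products and pullback-exponentials, identified the right generating lifting problems, and kept track of the constructive convention about chosen filler functions.
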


\begin{proof} The claims follow easily from the pushout product property of the model structure.
\end{proof}
 
Interestingly, the cofibrancy assumptions of part~(i) of~\cref{thm:exponentials} allows
us to prove the claim constructively also by following the combinatorial proof in~\cite{MayJP:simoat}, exploiting the decidability of degeneracy in $X$ instead of appealing to the law of excluded middle.

For a simplicial set $A$, we define its path object by letting $\Path(A) \defeq A^{\Delta[1]}$. 
There are evident boundary map $\partial \co \Path(A) \to A \times A$, giving
the endpoints of a path. We write $\partial_0$, $\partial_1 \co \Path(A) \to A$ for the compositions of $\partial$
with the two projections and $r \co A \to \Path(A)$ for the `constant path' map.

\begin{proposition} \label{thm:id-types-for-types}
Assume that $A$ is fibrant. Then,
\begin{enumerate}[(i)] 
\item $\Path(A)$ is fibrant,
\item the boundary map $\partial  \co \Path(A) \rightarrow A \times A$ is a fibration,
\item the composite of $\partial \co \Path(A) \rightarrow A \times A$ with either projection is a trivial fibration,
\item the map $r \co A \rightarrow \Path(A)$ induced by the unique map $\Delta[1] \rightarrow \Delta[0]$ is a weak equivalence.
\end{enumerate}
\end{proposition}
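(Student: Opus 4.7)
The plan is to reduce every claim to \cref{thm:exponentials} applied to maps out of $\Delta[1]$, together with the two-out-of-three property for weak equivalences. Throughout, I will use that each standard simplex $\Delta[n]$ is cofibrant (degeneracies are decidable), that $\partial \Delta[1] = \{0\} + \{1\}$, and that the endpoint inclusions $\delta^k \co \{k\} \to \Delta[1]$ coincide with the horn inclusions $h^1_k$ and are therefore trivial cofibrations.

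For (i), $\Delta[1]$ is cofibrant, so by part~(i) of \cref{thm:exponentials} the exponential $\Path(A) = A^{\Delta[1]}$ is fibrant. For (ii), the boundary map $\partial \co \Path(A) \to A\times A$ is obtained by applying $A^{(-)}$ to the boundary inclusion $i^1 \co \partial \Delta[1] \to \Delta[1]$, using the natural identification $A^{\partial\Delta[1]} = A\times A$. Since $i^1$ is a cofibration, part~(ii) of \cref{thm:exponentials} gives that $\partial$ is a fibration.

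For (iii), each composite $\partial_k = \pi_k \circ \partial \co \Path(A) \to A$ is $A^{\delta^k}$, where $\delta^k \co \{k\} \to \Delta[1]$ is an endpoint inclusion. As noted above, $\delta^k$ is a trivial cofibration, so part~(iii) of \cref{thm:exponentials} yields that $\partial_k$ is a trivial fibration. Finally, for (iv), the map $r \co A \to \Path(A)$ is $A^\sigma$ for the unique $\sigma \co \Delta[1] \to \Delta[0]$, and satisfies $\partial_k \circ r = \id_A$ for each $k \in \{0,1\}$. Since $\partial_k$ is a trivial fibration (hence a weak equivalence) and $\id_A$ is a weak equivalence, two-out-of-three implies that $r$ is a weak equivalence.

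I do not anticipate a genuine obstacle: once \cref{thm:exponentials} is in hand, everything is formal. The only point requiring a little care is to verify the identification $A^{\partial\Delta[1]} \cong A\times A$ so that $\partial = A^{i^1}$ really is the boundary map, and to check that the two endpoint inclusions into $\Delta[1]$ are horn inclusions (hence trivial cofibrations) in the constructive Kan--Quillen model structure; both are immediate from the definitions recalled in \cref{sec:preliminaries}.
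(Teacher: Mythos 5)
Your argument is correct and matches the paper's proof step for step: part (i) is part (i) of Lemma~\ref{thm:exponentials} applied to the cofibrant object $\Delta[1]$; part (ii) is part (ii) of that lemma applied to $i^1$; part (iii) is part (iii) applied to the horn inclusions into $\Delta[1]$; and part (iv) is two-out-of-three applied to $\partial_k \circ r = \id_A$. There is nothing to add.
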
 

\begin{proof}
Part~(i) is just a special case of part~(i) of \cref{thm:exponentials}. For part~(ii), apply part~(ii) of \cref{thm:exponentials} to the cofibration $i^1 \co \partial \Delta[1]  \hookrightarrow \Delta[1]$. For part~(iii), apply part~(iii) of \cref{thm:exponentials} to the horn inclusions $h^k_n \co \Lambda^k[1]  \rightarrow \Delta[1]$. Part~(iv) follows from the 3-for-2 property for weak equivalences applied to $A \rightarrow A^{\Delta[1]} \rightarrow A$. Indeed, the
composite is the identity and the second factor has just been proved to be a trivial fibration.
\end{proof}

In order to interpret identity types, we need $\Path(A)$ to be cofibrant and the map $r \co A \rightarrow \Path(A)$ to be a trivial cofibration. This is achieved by the following proposition.

\begin{proposition}\label{proposition:PathObjectCofibrant}
Let $A$ be cofibrant. Then $\Path(A)$ is cofibrant and the map~$r \co A \rightarrow \Path(A)$ is a cofibration.
\end{proposition}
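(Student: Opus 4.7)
The plan is to derive both claims directly from \cref{prop:X^kCofibrant}, specialising to the case $K = \Delta[1]$ and to the unique degeneracy $\Delta[1] \to \Delta[0]$. Both simplicial sets $\Delta[0]$ and $\Delta[1]$ are finite decidable simplicial sets (they are representables with finitely many cells and decidable equality on each level), so the hypotheses of \cref{prop:X^kCofibrant} are satisfied.

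First I would establish cofibrancy of $\Path(A) = A^{\Delta[1]}$: since $A$ is cofibrant by hypothesis and $\Delta[1]$ is a finite decidable simplicial set, part~(i) of \cref{prop:X^kCofibrant} gives immediately that $A^{\Delta[1]}$ is cofibrant.

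Next, for the map $r \co A \to \Path(A)$, I would observe that $r$ is, by definition, the exponential $A^\sigma$ where $\sigma \co \Delta[1] \to \Delta[0]$ is the unique degeneracy map, modulo the canonical identification $A^{\Delta[0]} \cong A$. The map $\sigma$ is a degeneracy (it is one of the generating maps of the class $\mathsf{L}$), and in particular a degeneracy quotient between finite decidable simplicial sets. Part~(ii) of \cref{prop:X^kCofibrant} then applies, yielding that $A^\sigma \co A^{\Delta[0]} \to A^{\Delta[1]}$ is a cofibration between cofibrant objects, i.e.\ that $r \co A \to \Path(A)$ is a cofibration.

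I do not anticipate any genuine obstacle here: all the homotopy-theoretic and decidability work has already been carried out in \cref{prop:X^kCofibrant} (which in turn relies on \cref{lem:decidability_lift_degen_quo} to decide factorisations through degeneracy quotients into cofibrant codomains). The only thing to check carefully is that $\sigma \co \Delta[1] \to \Delta[0]$ really is a degeneracy quotient in the sense used in the paper — but this is immediate, since degeneracy quotients are generated by the maps induced by surjections in $\Delta$, and $[1] \twoheadrightarrow [0]$ is such a surjection.
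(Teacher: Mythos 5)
Your proof is correct and follows essentially the same route as the paper: both derive the result by applying \cref{prop:X^kCofibrant} to the cofibrant simplicial set $A$ and the degeneracy map $\Delta[1] \to \Delta[0]$. (Note that the paper's proof cites ``part~(iii)'' of that lemma, which is a typo for part~(ii); part~(ii) alone already yields both claims, since it asserts the exponential is a cofibration \emph{between cofibrant objects}, though your additional invocation of part~(i) is harmless.)
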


\begin{proof}
This follows from part~(iii) of \cref{prop:X^kCofibrant}, applied to the cofibrant simplicial set $X$, and the degeneracy map between finite simplicial sets $\Delta[1] \rightarrow \Delta[0]$. 
\end{proof}

We now define mapping path spaces and  extend~\cref{thm:id-types-for-types} and~\cref{proposition:PathObjectCofibrant}. Given a map $p \co A \to X$, we define $\Path(p)$ 
via the pullback diagram
\[
\xymatrix{
\Path(p) \drpullback \ar[r] \ar[d] & \Path(A) \ar[d] \\
X \ar[r]_-{r_X} & \Path(X) }
\]
The structural maps $r_p \co A \rightarrow \Path(p)$ and $\partial \co \Path(p) \rightarrow A \times_{X} A$ are produced by the diagram:
\[
\xymatrix{
& A \ar[rr] \ar[dd] & & \Path(A) \ar[rr] \ar[dd] & & A \times A \ar[dd] \\
A \ar[ur] \ar[rr]^(.6){r}  \ar[dd] & & \Path(p) \ar[ur] \ar[rr]^(.65){\partial} \ar[dd] & & A \times_{X} A \ar[ur] \ar[dd] \\
& X \ar[rr] & & \Path(X) \ar[rr] & & X \times X \, , \\
X \ar[ur] \ar[rr] & & X \ar[rr] \ar[ur] & & X \ar[ur] \\ 
}
\]
where the three square in the vertical/diagonal direction are pullbacks. As before, the maps
$\partial_0 \, , \partial_1 \co \Path(A) \to A$ are defined the composites of $\partial$
with the two projections.

\begin{theorem}
\label{thm:MainPathObject}
Assume that $p \co A \rightarrow X $ is a fibration between cofibrant objects Then,
\begin{enumerate}[(i)] 
\item \label{thm:MainPathObject:IdBifib} $\Path(p)$ is cofibrant, 
\item the map $\Path(p) \rightarrow X$ is a fibration,
\item the map $\partial \co \Path(p) \rightarrow A \times_{X} A$ is a fibration,
\item $\partial_k \co \Path(p) \rightarrow A \times_{X} A$  is a trivial fibration, for $k \in \braces{0 \, , 1 }$,
\item the map $r_p \co A \rightarrow \Path(p)$ is a trivial cofibration.
\end{enumerate}
\end{theorem}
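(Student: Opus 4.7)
The plan is to exploit the pullback description $\Path(p) = A^{\Delta[1]} \times_{X^{\Delta[1]}} X$ and to identify the boundary maps $\partial$ and $\partial_k$ as pullbacks of the pullback-exponentials $\langle i^1, p\rangle$ and $\langle h^k_1, p\rangle$ respectively, where $i^1 \co \partial \Delta[1] \to \Delta[1]$ is the boundary inclusion and $h^k_1 \co \Lambda^k[1] \to \Delta[1]$ is a horn inclusion. For part~(i), \cref{proposition:PathObjectCofibrant} gives that $\Path(A)$ is cofibrant, and since $X$ is cofibrant by hypothesis, the pullback $\Path(p)$ is cofibrant by \cref{lem:cofibrant_fiber_product}(ii). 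For part~(ii), $\Path(p) \to X$ is the pullback along $r_X$ of $p^{\Delta[1]} \co A^{\Delta[1]} \to X^{\Delta[1]}$, and $p^{\Delta[1]}$ is a fibration by \cref{thm:exponentials}(iv) applied to the cofibrant simplicial set $\Delta[1]$, so its pullback is also a fibration.

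For parts~(iii) and~(iv), the dual pushout-product property shows that $\langle i^1, p\rangle \co A^{\Delta[1]} \to (A \times A) \times_{X \times X} X^{\Delta[1]}$ is a fibration and that $\langle h^k_1, p\rangle \co A^{\Delta[1]} \to A \times_X X^{\Delta[1]}$ is a trivial fibration, since $i^1$ is a cofibration, $h^k_1$ is a trivial cofibration, and $p$ is a fibration. Now the canonical map $A \times_X A \to (A \times A) \times_{X \times X} X^{\Delta[1]}$ sending $(a, b)$ to the pair consisting of $(a,b)$ together with the constant path at their common image in $X$ (well-defined since $p(a) = p(b)$ on $A \times_X A$), together with the analogous map $A \to A \times_X X^{\Delta[1]}$ sending $a$ to $(a, \const_{p(a)})$, exhibit $\partial$ and $\partial_k$ as pullbacks of these pullback-exponentials: in both cases the fibre product consists precisely of those paths $\gamma \in A^{\Delta[1]}$ with $p \cc \gamma$ constant, which is the defining condition of $\Path(p)$. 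Parts~(iii) and~(iv) follow immediately.

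Finally, for part~(v), the identity $\partial_k \cc r_p = \id_A$ together with~(iv) yield by 2-out-of-3 that $r_p$ is a weak equivalence. For cofibrancy, since both $A$ and $\Path(p)$ are cofibrant (by hypothesis and by~(i)), it suffices by the characterisation recalled in \cref{sec:preliminaries} to show that $r_p$ is a levelwise complemented monomorphism. Factoring $r = j \cc r_p$ through the monomorphism $j \co \Path(p) \to \Path(A)$ arising from the defining pullback reduces this to the corresponding property of $r \co A \to \Path(A)$: for $\gamma \in \Path(p)_n$, one has $\gamma \in r_p(A_n)$ if and only if $j(\gamma) \in r(A_n)$, whose decidability follows from $r$ being a cofibration by \cref{proposition:PathObjectCofibrant}. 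The main technical point of the proof is thus spotting the two pullback identifications in~(iii) and~(iv); once these are in place the rest is entirely formal, and I do not anticipate any substantive obstacle.
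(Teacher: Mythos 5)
Your proof is correct and follows essentially the same approach as the paper: both use the pullback description of $\Path(p)$ over $\Path(A)$, the dual pushout-product property to obtain the (trivial) fibrations $\langle i^1, p\rangle$ and $\langle h^k_1, p\rangle$, a pullback identification of $\partial$ and $\partial_k$ as base changes of these pullback-exponentials, and the factorisation of $r_p$ through $\Path(A)$ together with \cref{proposition:PathObjectCofibrant} and 3-for-2 to handle part~(v). The only difference is cosmetic: the paper expresses the pullback identification via a two-square pasting diagram, while you make the comparison map $A \times_X A \to (A \times A) \times_{X \times X} X^{\Delta[1]}$ explicit and verify the universal property directly. (Incidentally, the theorem statement's codomain for $\partial_k$ is a typo for $A$, which both you and the paper's proof correctly address.)
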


\begin{proof} The map $\Path(p) \rightarrow X$ is a pullback of the maps $\Path(A) \rightarrow \Path(X)$ along $X \rightarrow \Path(X)$. Hence, since $\Path(A) \rightarrow \Path(X)$ is a fibration (by part~(iv) of~\cref{thm:exponentials}), the map $\Path(p) \rightarrow X$ is a fibration. Since $X$ is cofibrant by assumption and $\Path(A)$ is  cofibrant by \cref{proposition:PathObjectCofibrant}, we have that $\Path(p)$ is also cofibrant by~\cref{lem:cofibrant_fiber_product}. 

By the dual of the pushout-product property, the map $\langle \partial \Delta[n] \hookrightarrow \Delta[n] ,  A \rightarrow X \rangle$ is a fibration. This map is 
\[ 
\Path(A) \rightarrow (A \times A) \times_{X \times X} \Path(X) \, .
\] 
Moreover, in the diagram
\[
\xymatrix{
\Path(p) \drpullback \ar[r] \ar[d] & A \times_{X} A \ar[d] \ar[r] \drpullback & X \ar[d] \\
\Path(A) \ar[r] & \Path(X) \times_{X \times X}  (A \times A) \ar[r]  & \Path(X) \, ,
}
 \]
the right hand square is easily seen to be a pullback and the total rectangle is the pullback defining $\Path(p)$, hence the left hand square is also a pullback. Since the bottom left map is a fibration,  $\Path(p) \rightarrow A \times_{X} A$ is a fibration as well.

By a similar argument, for $k \in \braces{ 0 \, , 1}$, the map $\langle \Lambda^k[n] \hookrightarrow \Delta[n] \, , A \rightarrow X\rangle$ is a trivial fibration. Indeed, this is the map $q \co \Path(A) \rightarrow  A  \times_X \Path(X)$ which fits into the pullback diagrams
\[
\xymatrix{
\Path(p) \drpullback \ar[r] \ar[d] & A  \ar[d] \ar[r] \drpullback & X \ar[d] \\
\Path(A) \ar[r]_-{q} & A \times_X \Path(X) \ar[r]  & \Path(X) \, .
}
 \]
This shows that the canonical maps $\partial_k \co \Path(p) \rightarrow A$ are trivial fibrations.

We conclude by showing that the map $A \rightarrow \Path(p)$ is levelwise complemented. Indeed, it fits into a factorization 
\[
A \rightarrow \Path(p) \rightarrow \Path(A)
\] 
of a map which has been proved to be a levelwise complemented inclusion in~\cref{proposition:PathObjectCofibrant}. Therefore, for any cell of $\Path(p)$ one can decide if it is in $A$ or not by considering it as a cell of $\Path(A)$. Since $A$ and $\Path(p)$ are cofibrant, this shows that $A \rightarrow \Path(p)$ is a cofibration. The 3-for-2 property for weak equivalences applied to $A \rightarrow \Path(p) \rightarrow A$ shows that  $A \rightarrow \Path(p)$ is also a weak equivalence, and hence a trivial cofibration.
\end{proof}

\section{$\Pi$-types via cofibrant replacements}
\label{sec:Pi-types}

The aim of this section is to prove the results of simplicial homotopy theory necessary to show that
the comprehension category of cofibrant simplicial sets supports $\Pi$-types. 
In order to do this, we should consider a fibration with cofibrant domain $p \co A \to X$  and define an operation mapping a fibration with cofibrant domain $q \co B \to A$  to a new fibration with cofibrant domain~$\widetilde{\Pi}_p(q) \co \widetilde{\Pi}_A(B) \to X$ together with additional data~\cite{LumsdaineP:locuoc}. 
Given such a map $q \co B \to A$, we proceed
in two steps. First, we apply the dependent product along $p$, 
\[
\Pi_p \co \SSet_{/A} \to \SSet_{/X} \, , 
\]
to $q \co B \to A$ and obtain a map $\Pi_p(q) \co \Pi_A(B) \to X$, which we will show to be again a fibration. Since this fibration does not seem to have  cofibrant domain in general, we then apply a
cofibrant replacement in $\SSet_{/X}$ to $\Pi_p(q) \co \Pi_A(B) \to X$ so as to obtain a map with all the desired properties. Interestingly, the result will support a categorical counterpart of the propositional
$\eta$-rule for $\Pi$-types.

Below, we show that, for a fibration $p \co A \rightarrow X$ with cofibrant domain, the dependent product along $p$ preserves fibrations. By adjointness, this is equivalent to showing that its left adjoint, \ie 
 pullback along $f$, preserves trivial cofibrations. This amounts to proving a restricted version of the Frobenius property~\cite{BergB:topsmi}, obtained by considering pullbacks along fibrations with
cofibrant domain rather than general fibrations. By the obstruction results in~\cite{coquand-non-constructivity-kan}  these cofibrancy assumptions are essential in our constructive setting.

\begin{theorem}\label{cor:Pi_types_are_fibrant}
Let $p \co A \rightarrow X$ be a fibration with cofibrant domain,
\begin{enumerate}[$(i)$]
\item The pullback functor $p^* \co \SSet_{/X} \to \SSet_{/A}$ preserves trivial cofibrations.
\item The dependent product $\Pi_p \co \SSet_{/A} \to \SSet_{/X}$ preserves  fibrations.
\end{enumerate}
\end{theorem}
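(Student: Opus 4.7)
The plan is to first observe that (i) and (ii) are equivalent by a standard adjointness argument, and then to establish (i) directly by combining right properness of the constructive model structure with \cref{thm:cof-pbk}.

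For the equivalence, the adjunction $p^* \dashv \Pi_p$ transposes a lifting problem for a map $q$ in $\SSet_{/A}$ against $p^*(j)$ into a lifting problem for $\Pi_p(q)$ in $\SSet_{/X}$ against $j$, and the two sets of solutions correspond. Since fibrations are characterised as the maps with the right lifting property against trivial cofibrations and, dually, trivial cofibrations as those with the left lifting property against fibrations (via the $(\TrivCof, \Fib)$ weak factorisation system), the statement that $p^*$ preserves trivial cofibrations is logically equivalent to the statement that $\Pi_p$ preserves fibrations. So it suffices to prove (i).

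To prove (i), I would take a trivial cofibration $j \co U \to V$ in $\SSet_{/X}$, with structural map $V \to X$, and consider the canonical pullback square
\[
\xymatrix{
p^*(U) \ar[r]^{p^*(j)} \ar[d] \drpullback & p^*(V) \ar[d] \\
U \ar[r]_j & V \, .
}
\]
The right-hand vertical is a pullback of the fibration $p$, hence itself a fibration; the bottom $j$ is a weak equivalence. Right properness of the model structure therefore yields that $p^*(j)$ is a weak equivalence. On the other hand, $j$ is a cofibration and, by \cref{thm:cof-pbk} (which applies precisely because $p$ has cofibrant domain), $p^*(j)$ is a cofibration. Combining the two, $p^*(j)$ is a trivial cofibration, as required.

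I do not expect any serious obstacle, since the two preliminary results do essentially all of the work. The cofibrancy hypothesis on the domain of $p$ is used exactly once, namely in the appeal to \cref{thm:cof-pbk}, and this is precisely what makes the argument go through in the constructive setting, where the unrestricted Frobenius property fails by the obstruction of~\cite{coquand-non-constructivity-kan}. The only point requiring mild care is to read the square above as a pullback in $\SSet$ and to invoke right properness as a statement about $\SSet$ rather than about a slice, which is legitimate because fibrations, cofibrations, and weak equivalences in the slice categories are detected by the underlying maps of simplicial sets.
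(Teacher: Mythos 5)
Your proposal is correct and follows essentially the same approach as the paper: part~(i) from right properness (giving weak equivalence preservation) combined with \cref{thm:cof-pbk} (giving cofibration preservation), and part~(ii) by the adjunction $p^* \dashv \Pi_p$. The extra detail you supply (the explicit pullback square and the remark about detecting the classes in the slice) is harmless elaboration of the same argument.
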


\begin{proof} Part~(i) follows from \cref{thm:cof-pbk}, which shows that $p^*$ preserves cofibrations, together with the right properness of the simplicial model structure which shows that it preserves weak equivalences.
 Part~(ii) follows from part~(i) as $\Pi_p$ is right adjoint to $p^*$.
\end{proof}

Another proof of~\cref{cor:Pi_types_are_fibrant} is given in~\cite[Section~4]{GambinoN:anocp} modifying appropriately the arguments in~\cite{gambino2017frobenius}.

\begin{remark}  \label{rem:pi-types}
We define explicitly how the $\Pi$-types of the comprehension category of cofibrant simplicial sets are defined. For this, let us recall that, for maps $p \co A \to X$ and $q \co B \to A$,  the dependent product $\Pi_p(q) \co \Pi_A(B)
\to X$ is equipped with a map
\[
\mathsf{app} \co \Pi_A(B) \times_A  A \to B
\] 
in $\SSet_{/A}$ which is universal in the sense that, for every  $Y \to X$, the function
\[
\begin{array}{rcl} 
 \SSet_{/X}[ Y , \Pi_A(B)] & \longrightarrow &  \SSet_{/A}[Y \times_A A, B]  \\
  f & \longmapsto & \mathsf{app}(f \times_A 1_A) 
  \end{array} 
 \]
 is a bijection. This means that we have a function $\lambda$ in the opposite direction such that  
 \begin{equation}
 \label{equ:betaeta}
 \mathsf{app}(\lambda(b) \times_A 1_A) = b   \, , \quad
 \lambda( \mathsf{app}(f \times_A 1_A)) = f \, ,
 \end{equation}
 for every $b \co Y \times_A A \to B$ and $f \co Y \to \Pi_A(B)$.  These equations correspond to the
 well-known judgemental $\beta$-rule and $\eta$-rule for $\Pi$-types, respectively.
 
 When $p$ and $q$ are fibrations and $A$ is cofibrant, the map 
 $\Pi_p(q) \co \Pi_A(B) \to X$ is a fibration by \cref{cor:Pi_types_are_fibrant} but $\Pi_A(B)$ is not cofibrant,
 in general. Thus, we interpret  $\Pi$-types as the 
 cofibrant replacement  of $\Pi_A(B)$, which is given by a cofibrant simplicial set
 $\mathbb{L}(\Pi_A(B)$  equipped with
 a trivial fibration $\varepsilon \co \mathbb{L}(\Pi_A(B)) \to \Pi_A(B)$. 
We then define $\widetilde{\mathsf{app}} \co   \mathbb{L}(\Pi_A(B)) \times_A A \to B$ as the composite
\[
\widetilde{\mathsf{app}}  \defeq \mathsf{app} \circ (\varepsilon \times_A 1_A) \, .
\]
For a bifibrant simplicial set $Y$ and maps $Y \to X$,  $b \co Y \times_A A \to B$, we define $\widetilde{\lambda}(b) \co Y \to \mathbb{L}(\Pi_A(B))$ to be the
diagonal filler
\begin{equation}
\label{equ:diag-for-beta}
\begin{gathered}
\xymatrix{
0 \ar[r] \ar[d] & \mathbb{L}(\Pi_A(B))  \ar[d]^\varepsilon \\
Y \ar[r]_-{\lambda(b)} \ar@{.>}[ur] & \Pi_A(B) \, ,}
\end{gathered}
\end{equation} 
which exists since $Y$ is cofibrant and $\varepsilon$ is a trivial fibration. It follows 
that~$ \widetilde{\mathsf{app}}(\widetilde{\lambda}(b) \times_A 1_A) = b$, 
so the $\beta$-rule holds as an equality. Instead, for $f \co X \to \mathbb{L}(\Pi_A(B))$, we have a homotopy
\[
\eta_f  \co \widetilde{\lambda}( \widetilde{\mathsf{app}} \, (f \times_A 1_A)) \sim  f  \, ,
\]
which is constructed as the diagonal filler in the following diagram
\[
\xymatrix@C=3cm{
\partial \Delta[1] \times Y \ar[r]^-{\big[f \, , \;  \widetilde{\lambda} ( 
\widetilde{\mathsf{app}} \,  (f \times 1_A) ) \big]} \ar[d] & \mathbb{L}(\Pi_A(B)) \ar[d]^\varepsilon \\
\Delta[1] \times Y \ar[r] \ar@{.>}[ur] & \Pi_A(B) \, , }
\]
where the bottom map is the constant homotopy given by the equality in the $\eta$-rule in~\eqref{equ:betaeta}. 

This result will allow us to equip the comprehension category
of cofibrant simplicial sets~\cref{thm:compcat} with weakly stable $\Pi$-types. We 
will take the $\Pi$-types in the comprehension category to be given by the objects
$ \mathbb{L}(\Pi_A(B))$. These will satisfy  stability properties that are weaker than those satisfied by $\Pi_A(B)$ in the classical simplicial model, 
essentially the same as those enjoyed by $\mathsf{Id}$-types in homotopy-theoretic models~\cite{awodey-warren:homotopy-idtype}, which are weaker even than those considered in~\cite{CarboniA:locccec}.  

Let us also note that the requirement of cofibrancy of $Y$ is used in an 
essential way to construct the diagonal filler in~\eqref{equ:diag-for-beta}. For this
reason, it seems impossible to obtain an analogous result for the comprehension category 
with base the category of all simplicial sets and total category that of Kan fibrations 
with cofibrant domain.
\end{remark}

\section{The weak equivalence extension property}
\label{sec:equep}

The main goal of this section is to prove the so-called weak equivalence extension property, which will be the key to prove the univalence of the classifying fibrations considered in~\cref{sec:unifbb,sec:fibrancy-and-univalence}.  For this, we follow closely the approach in \cite{voevodsky-simplicial-model}, but exploiting crucially the cofibrancy requirements that are part of our set-up. Another proof is given in~\cite[Section~3.2]{GambinoN:anocp}, adapting the argument in~\cite{SattlerC:equepu}.

\begin{lemma}\label{Lemma:ForTheExtProperty} Let  $f \co Y \rightarrow X$ be a cofibration between
cofibrant objects. 
\begin{enumerate}[$(i)$]
\item The dependent product along $f$, $\Pi_f \co \SSet_{/Y} \rightarrow \SSet_{/X}$, preserves trivial fibrations.
\item The counit of the adjunction $f^* \dashv \Pi_f$ is a natural isomorphism.
\item If $q \co B \to Y$ is a map with cofibrant domain, then $\Pi_f(q) \co \Pi_Y(B) \to X$  is so.
\item Trivial fibrations extend along $f$, \ie given a trivial fibration $q
 \co B \to Y$  as in the solid diagram
\[
\xymatrix{
B \ar@{.>}[r]^{g} \ar[d]_{q} \drpullback  & A \ar@{.>}[d]^{p} \\
Y \ar[r]_-f &  X \, ,}
\]
then there exists a trivial fibration $p \co A \rightarrow X$ which fits in the dotted pullback square above. Moreover if $B$ is cofibrant  then 
$A$ can be chosen to be 
cofibrant as well.
\end{enumerate}
\end{lemma}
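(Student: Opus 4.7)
The plan is to treat the four parts in sequence, using the earlier results of \cref{sec:basrp} and the adjointness $f^* \dashv \Pi_f$; parts (i), (ii), and (iii) then feed directly into part (iv). The main technical difficulty will lie in establishing the cofibrancy assertion of part (iii).

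For part (i), the idea is that $\Pi_f$ preserves trivial fibrations if and only if its left adjoint $f^*$ preserves cofibrations, and the latter is immediate from \cref{thm:cof-pbk} applied to $f \co Y \to X$, whose domain is cofibrant by hypothesis. For part (ii), the key observation is that since $f$ is a cofibration it is in particular a monomorphism, and hence $\Sigma_f$ is fully faithful: the unit of $\Sigma_f \dashv f^*$ at $g \co Z \to Y$ is the canonical map $Z \to Y \times_X Z$ sending $z$ to $(g(z), z)$, which is an isomorphism precisely when $f$ is mono. The standard fact about adjoint triples $\Sigma_f \dashv f^* \dashv \Pi_f$ then yields that $\Pi_f$ is also fully faithful, equivalently that the counit $f^* \Pi_f \to 1$ is a natural isomorphism; alternatively, this can be verified directly using the description of $\Pi_Y(B)$ as the simplicial set of local sections of $q$ along $f$.

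The heart of the argument is part (iii). An $n$-simplex of $\Pi_Y(B)$ is a pair $(\sigma, s)$, where $\sigma \co \Delta[n] \to X$ and $s$ is a section of the pullback of $q$ along $\Delta[n] \times_X Y \to \Delta[n]$. To verify cofibrancy, I would decide degeneracy of $(\sigma, s)$ in two stages. Since $X$ is cofibrant, one can first decide whether $\sigma$ is degenerate; if not, $(\sigma, s)$ is non-degenerate. Otherwise, the Eilenberg--Zilber lemma for cofibrant simplicial sets yields a unique factorization $\sigma = \sigma_0 \circ \tau$ with $\sigma_0 \in X_m$ non-degenerate and $\tau \co [n] \twoheadrightarrow [m]$ a non-trivial surjection, and $(\sigma, s)$ is degenerate precisely when $s$ factors through the map $\Delta[n] \times_X Y \to \Delta[m] \times_X Y$ induced by $\tau$. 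By \cref{prop:Degen_quotient_are_pullback_stable} this induced map is a degeneracy quotient; moreover, as the pullback of the single-generator quotient $\tau$, it is itself finitely generated: since $f$ is monic, its only possible generator is the cell $(\mathrm{id}_n, y_0)$ with $y_0 \in Y_n$ the unique element satisfying $f(y_0) = \sigma$, if such a $y_0$ exists, and no generators otherwise. The factorization problem is thereby reduced to a finite collection of equalities in $B$, each decidable by cofibrancy of $B$, exactly as in the proof sketch of \cref{lem:decidability_lift_degen_quo}.

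Part (iv) then follows by setting $p \defeq \Pi_f(q) \co \Pi_Y(B) \to X$: it is a trivial fibration by (i), and the counit of the adjunction supplies the required isomorphism $f^* p \cong q$ by (ii), making the displayed square a pullback; when $B$ is cofibrant, the domain $\Pi_Y(B)$ is cofibrant by (iii). The obstacle I expect to be hardest is part (iii), specifically justifying that the pullback of a finitely-generated degeneracy quotient remains finitely generated, so that the finiteness underlying \cref{lem:decidability_lift_degen_quo} still applies even though the simplicial sets $\Delta[n] \times_X Y$ and $\Delta[m] \times_X Y$ need not themselves be finite.
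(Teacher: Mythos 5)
Your parts (i), (ii) and (iv) are fine and coincide with the paper's argument (adjointness plus \cref{thm:cof-pbk} for (i); $\Sigma_f$ fully faithful since $f$ is mono for (ii); $p \defeq \Pi_f(q)$ assembled from (i)--(iii) for (iv)). The genuine gap is in part (iii), and it is the step you rely on most: the claim that, when $\sigma$ is degenerate with Eilenberg--Zilber factorisation $\sigma = \sigma_0 \circ \tau$, the cell $(\sigma,s)$ of $\Pi_Y(B)$ is degenerate \emph{precisely when} $s$ factors through the map induced by $\tau$. This equivalence is false. Degeneracy of $(\sigma,s)$ means that it is $\alpha$-degenerate for \emph{some} non-trivial surjection $\alpha$, and any such $\alpha$ need only divide $\tau$ (i.e.\ $\tau = \beta\alpha$); the corresponding condition is that $s$ factors through the map induced by $\alpha$, which is strictly weaker than factoring through the one induced by $\tau$. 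Already for $f = \mathrm{id}_{\Delta[0]}$ (a cofibration between cofibrant objects, with $\Pi_Y(B) \cong B$) and $B = \Delta[1]$, the $2$-cell $s_0$ of the non-degenerate edge is degenerate, yet it does not factor through $\Delta[2] \to \Delta[0]$, which is what your test demands since here $\tau \co [2] \to [0]$. So your criterion misclassifies degenerate cells as non-degenerate. The repair is to argue as the paper does: for \emph{each} degeneracy $\alpha \co [n] \to [k]$ (finitely many, or just the elementary ones dividing $\tau$) decide whether $(\sigma,s)$ is $\alpha$-degenerate, by first checking $\alpha$-degeneracy of $\sigma$ in the cofibrant $X$, and then, via the adjunction $f^* \dashv \Pi_f$, checking whether $s$ factors through $f^*\alpha \co f^*\Delta[n] \to f^*\Delta[k]$.

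Your treatment of the decidability of that factorisation is also off, in two ways. First, the finiteness worry you single out as the hard point is unfounded: since $f$ is a levelwise complemented monomorphism, $f^*\Delta[n] = \Delta[n] \times_X Y$ and $f^*\Delta[k]$ are decidable simplicial subsets of $\Delta[n]$ and $\Delta[k]$, hence finite decidable simplicial sets, so \cref{lem:decidability_lift_degen_quo} applies directly once \cref{prop:Degen_quotient_are_pullback_stable} tells you that $f^*\alpha$ is a degeneracy quotient (using also that degeneracy quotients are epimorphisms, so factoring the map is the same as solving the corresponding lifting square). Second, the auxiliary claim you substitute for this --- that the pulled-back quotient has at most the single generator $(\mathrm{id}_n, y_0)$ and is an isomorphism when no $y_0$ exists --- is false: for $X = \Delta[1]$, $Y = \partial\Delta[1]$ and $\sigma$ the degenerate $2$-cell $s_0$ of the non-degenerate edge, no cell of $Y$ lies over $\sigma$, yet the induced map $\Delta[2] \times_X Y \to \Delta[1] \times_X Y$ collapses an edge and is not an isomorphism. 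With the per-degeneracy criterion and the finiteness observation above, your adjunction-based reduction of the problem from $X$ to $Y$ does go through, and it then coincides with the paper's proof of part (iii).
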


\begin{proof} We prove the different parts separately. 

For part~(i), since the dependent product functor $\Pi_f$ is right adjoint to the pullback functor~$f^*$ and trivial fibrations are the maps with the the right lifting property with respect to cofibrations, $\Pi_f$ preserves trivial fibrations if and only if $f^*$ preserves cofibrations. But this follows from~\cref{thm:cof-pbk}.

For part~(ii),  since $f$ is a monomorphism,  $\Sigma_f \co \SSet_{/X} \rightarrow \SSet_{/Y}$ is fully faithful and hence the unit $\eta \co 1 \rightarrow f^*  \Sigma_{f}$ is an isomorphism. By adjointness, the counit $\varepsilon \co f^* \Pi_f \rightarrow 1$ is also an isomorphism. 

For part (iii), let $y \co  \Delta[n] \rightarrow \Pi_Y(B)$ be a  $k$-cell. We will show that for a given degeneracy $\sigma \co [n] \rightarrow [k]$ it is decidable if $y$ is ``$\sigma$-degenerated'', i.e. if $y$ factors through $\sigma \co \Delta[n] \rightarrow \Delta[k]$. As degeneracy is decidable in $X$, one can freely assume that the image of   $y$ in $X$ via $\Pi_f(q)$ is $\sigma$-degenerated (as if it is not the case, $y$ is not $\sigma$-degenerated) hence one has a solid diagram:

\[
\xymatrix{
\Delta[n] \ar[d]_{\sigma} \ar[r] & \Pi_Y(B) \ar[d] \\
\Delta[k] \ar@{.>}[ur]^? \ar[r] & X \, . \\
}
\]
Because of the adjunction between dependent product and the pullback along $f$, the existence of a lift as above is equivalent to the existence of a lift in:
\[
\xymatrix{
f^* \Delta[n] \ar[d]_{f^* \sigma} \ar[r]^x & B \ar[d] \\
f^* \Delta[k] \ar@{.>}[ur]^? \ar[r] & Y \, .  \\
}
\]
The objects $f^* \Delta[n]$ and $f^* \Delta[k]$ are decidable simplicial subset of $\Delta[n]$ and $\Delta[k]$ because $f$ is itself a levelwise complemented monomorphism, hence they are both finite decidable simplicial sets. The map $f^* \sigma$ is degeneracy quotient by \cref{prop:Degen_quotient_are_pullback_stable}), hence \cref{lem:decidability_lift_degen_quo} shows that the existence of such a lift is decidable (as degeneracy quotients are epimorphisms, the existence of lift making the upper triangle commutes is equivalent to the existence of lift making the square commutes).

For part~(iv), given a trivial fibration $q \co B \to Y$, define $p \co A \to X$ to  be $\Pi_f(q) \co \Pi_Y(B) \to Y$. This map is a trivial fibration by part~(i) and the square is a pullback by part (ii). The final remark about the cofibrancy of $A$ follows from part~(iii). \qedhere
\end{proof}

\begin{proposition}[Weak equivalence extension property]
\label{Prop:Homotopy_ext_prop}
Let 
\[
\xymatrix{
B \ar[r]^g \ar@{->>}[d]_q & A \ar@{->>}[d]^p \\
Y \ar[r]_f & X \, ,}
\]
be a commutative diagram where $p \co A \to X$ and $q \co B \to Y$ are fibrations with cofibrant domains and $f \co Y \to X$ is
a cofibration between cofibrant objects. Assume that the map $u \co B \to A[f]$ defined by~$u \defeq (q, g)$, fitting the diagram 
of solid maps
\[ 
\xymatrix{
 B
  \ar@{.>}[rr]
  \ar[dr]^{u}
  \ar@{->>}[dd]_(.3){q}
&&
  \bar{B}
  \ar@{.>}[dr]^{v}
  \ar@{.>>}[dd]_(.3){\bar{q}}|{\hole}
&\\&
  A[f] 
  \ar[rr]
  \ar@{->>}[dl]
&&
  A
  \ar@{->>}[dl]^{p}
\\
  Y
  \ar[rr]_{f}
&&
  X \, ,
&
}
\]
is a weak equivalence in $\SSet_{/ Y}$. Then there exist a fibration $\bar{q} \co \bar{B} \to X$, a weak equivalence $v \co \bar{B} \to A$ in $\SSet_{/X}$ and a map $B \to \bar{B}$ such that both squares in the diagram above are pullbacks. 
\end{proposition}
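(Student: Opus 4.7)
The plan is to follow Voevodsky's factorise-and-extend strategy, reducing the extension of the weak equivalence to an extension of a trivial fibration (handled by \cref{Lemma:ForTheExtProperty}(iv)) and a separate, more delicate extension of a trivial cofibration. Specifically, first apply the $(\TrivCof, \TrivFib)$ weak factorisation system in the slice $\SSet_{/Y}$ to factor $u$ as
\[
B \xrightarrow{\;i\;} C \xrightarrow{\;t\;} A[f]
\]
with $i$ a trivial cofibration and $t$ a trivial fibration. Since $B$ is cofibrant and $i$ is a cofibration, $C$ is cofibrant; since $A[f] \to Y$ is a fibration (as a pullback of $p$), the composite $C \to Y$ is a fibration too.

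Next, apply \cref{Lemma:ForTheExtProperty}(iv) to $t$ and the cofibration $f$ between cofibrants. This extends $t$ to a trivial fibration $\bar t \co \bar C \to A$ with $\bar C$ cofibrant, satisfying $\bar t[f] = t$ and $\bar C[f] = C$. The canonical pullback map $\alpha \co C \hookrightarrow \bar C$ is a cofibration by \cref{thm:cof-pbk}, and it has decidable levelwise complements in $\bar C$ inherited from those of $f$ in $X$.

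The third step is the main technical hurdle: construct $\bar B$ together with a trivial cofibration $\bar i \co \bar B \hookrightarrow \bar C$ in $\SSet_{/X}$ satisfying $\bar i[f] = i$ (equivalently $\bar B[f] = B$), and equip the composite $\bar q \defeq p \circ \bar t \circ \bar i \co \bar B \to X$ with a Kan fibration structure. Since there is no direct dual of \cref{Lemma:ForTheExtProperty}(iv) extending trivial cofibrations along $\alpha$ with the required pullback property, this step must be carried out by an explicit construction. Following the shape of Voevodsky's argument, the idea is to build $\bar B$ by replacing $\alpha(C)$ inside $\bar C$ by a copy of $B$ glued along $i$---either realised as a carefully-chosen sub-simplicial set of $\bar C$, or, more robustly, via a pushout-type construction that adjoins the cells of $\bar C$ lying outside $\alpha(C)$ to $B$ with attaching data for boundaries meeting $\alpha(C)$ produced by the lifting property of the trivial cofibration $i$ against the horn inclusions. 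The Kan filler structure on $\bar q$ is then obtained by transferring the fillers of $\bar C \to X$ through $\bar i$, compatibly with those already fixed on $q$. The decidability of $\alpha$ and the cofibrancy of $B$, $C$, and $\bar C$ are used essentially here to ensure that each cellular choice can be made constructively and consistently.

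Finally, set $v \defeq \bar t \circ \bar i \co \bar B \to A$; this is a weak equivalence in $\SSet_{/X}$ as a composite of a trivial cofibration and a trivial fibration, and $v[f] = t \circ i = u$. The map $B \to \bar B$ is obtained from the pullback property $\bar B[f] = B$, and both squares in the displayed diagram are pullbacks by construction. The main obstacle throughout is the third step, where, unlike in the classical setting, the absence of a trivial-cofibration analogue of \cref{Lemma:ForTheExtProperty}(iv) forces a combinatorial construction in which the cofibrancy and decidability hypotheses play an indispensable role.
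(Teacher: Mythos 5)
Your high-level plan---factor $u$ into a trivial cofibration followed by a trivial fibration and treat the two halves separately---is sound and matches the structure of the paper's argument, and your handling of the trivial-fibration half via \cref{Lemma:ForTheExtProperty}(iv) is correct. But your ``third step,'' which you yourself flag as the main hurdle, is where the proof actually lives, and what you offer there is not an argument but a description of a hoped-for construction. Saying that $\bar{B}$ should be ``$\bar{C}$ with $\alpha(C)$ replaced by $B$'' or that one should ``adjoin the cells of $\bar{C}$ lying outside $\alpha(C)$ to $B$ with attaching data produced by lifting'' does not define a simplicial set, does not show it is a pullback along $f$ of anything, and in particular does not explain why the composite to $X$ is a fibration. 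Your suggestion that the Kan filler structure on $\bar{q}$ can be obtained ``by transferring the fillers of $\bar{C}\to X$ through $\bar{i}$'' has the arrow pointing the wrong way: $\bar{i}$ maps into $\bar{C}$, so fillers in $\bar{C}$ do not push forward to $\bar{B}$ unless $\bar{B}$ is exhibited as a retract of $\bar{C}$ over $X$, and nothing in your sketch produces such a retraction.

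The idea you are missing is that $\bar{B}$ can be \emph{defined} once and for all, for an arbitrary $u$, by the dependent-product pullback
\[
\xymatrix{
\bar{B} \ar[d] \ar[r] \drpullback & \Pi_Y(B) \ar[d]^{\Pi_f(u)} \\
A \ar[r]_-{\eta_{A}} & \Pi_Y \big( A[f]  \big) \, ,
}
\]
so that $\bar{B}[f] \cong B$ follows at once from $f^*\Pi_f \cong 1$ (\cref{Lemma:ForTheExtProperty}(ii)), and cofibrancy of $\bar{B}$ follows from \cref{Lemma:ForTheExtProperty}(iii) together with \cref{lem:cofibrant_fiber_product}. With this concrete definition in hand, the factorisation of $u$ is used only to \emph{verify} that $v\co\bar{B}\to A$ is a weak equivalence and $\bar{q}$ a fibration. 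The trivial-fibration half follows because $\Pi_f$ preserves trivial fibrations (\cref{Lemma:ForTheExtProperty}(i)) and $\bar{B}\to A$ is a pullback of $\Pi_f(u)$. For the trivial-cofibration half, rather than attempting any cell-by-cell extension, one observes that $u$ is a strong deformation retract over $Y$ (since its source and target are fibrant over $Y$), extends the contracting homotopy $H$ to a homotopy $H'\co\Delta[1]\times A\to A$ by lifting against the fibration $p$ through the pushout-product cofibration $(\Delta[1]\times(\bar{B}\cup A[f]))\cup(\Delta[0]\times A)\hookrightarrow\Delta[1]\times A$, and then checks, using the defining pullback and the adjunction $f^*\dashv\Pi_f$, that $H'_1$ lands in $\bar{B}$. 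This exhibits $\bar{B}$ as a deformation retract of $A$ over $X$, whence $v$ is a weak equivalence and $\bar{q}$, as a retract of $p$, is a fibration. You should replace your ``explicit construction'' with this definition-then-verify scheme.
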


\begin{proof} We define the required object $\bar{B}$ as the following pullback:
\[\xymatrix{
\bar{B} \ar[d] \ar[r] \drpullback & \Pi_Y(B) \ar[d] \\
A \ar[r]_-{\eta_{A}} & \Pi_Y \big( A[f]  \big) \, ,
}\]
where $\eta_{A}$ is  a component of the unit of adjunction  $f^* \dashv \Pi_f$. An application of the pullback $f^* \co \SSet_{/X} \to \SSet_{/Y}$ to this square gives the commutative square
\[\xymatrix{
\bar{B}[f] \ar[d] \ar[r]  & B \ar[d] \\
A[f] \ar@{=}[r] &A[f] \, ,
}\]
which is a pullback since $f^* \Pi_f \iso 1$ by part~(ii) \cref{Lemma:ForTheExtProperty}. Hence 
$B \iso \bar{B}[f]$, as required.

Since $B$ is cofibrant, we have that $\Pi_Y(B)$ is cofibrant by part~(iii) of  \cref{Lemma:ForTheExtProperty}. Hence, the simplicial set~$\bar{B}$  is also cofibrant by \cref{lem:cofibrant_fiber_product}. Furthermore, the maps $B \rightarrow \bar{B}$ and~$A[f] \rightarrow A$ are cofibrations by~\cref{thm:cof-pbk}, as they are pullback of the cofibration~$f \co Y \rightarrow X$.

It remains to prove that $v \co \bar{B} \rightarrow A$ is a weak equivalence and that $\bar{q} \co \bar{B} \rightarrow X$ is a fibration. The map $u$ can be factored into a trivial cofibration followed by a trivial fibration, and it is sufficient to prove these claims for each half of the factorization separately, \ie when~$u$ is a trivial fibration and when it is a trivial cofibration.

If $u$ is a trivial fibration, then its image under $\Pi_f$ is a trivial fibration by 
part~(i) of \cref{Lemma:ForTheExtProperty}. Since the map $\bar{B} \rightarrow A$ is a pullback of this map,
it is also a trivial fibration. This also implies that the composite $\bar{B} \rightarrow A \rightarrow X$ is a fibration.

We now assume that $u \co B \rightarrow A[f]$ is a trivial cofibration. Using that the maps from $\bar{B}$ and $A[f]$ to $Y$ are fibrations, we can show that $u$ is a strong deformation retract over $Y$, \ie there is a retraction $r \co A[f] \rightarrow B$ of $u$ in $\SSet_{/Y}$ and a homotopy 
\[
H \co \Delta[1] \times A[f] \rightarrow A[f]
\] 
between 
$u \circ r$ and $1_{A[f]}$, whose composite with $A[f] \rightarrow Y$ is the trivial homotopy. Indeed, $r$ and  $H$ are respectively constructed as the dotted diagonal liftings in the squares:

\[\xymatrix{
B \ar[d] \ar@{=}[r]  & B \ar@{->>}[d] \\
A[f] \ar[r] \ar@{.>}[ur]^r  & Y \, , 
} \quad \xymatrix{
\displaystyle A[f] +_B \left( B \times \Delta[1] \right) +_B A[f] \ar[d] \ar[r]  & A[f] \ar@{->>}[d] \\
A[f] \times \Delta[1] \ar[r] \ar@{.>}[ur]_H  & Y \, , 
}\]

We want to show that $\bar{B} \rightarrow A$ is also a deformation retract by constructing a similar homotopy 
\[
H' \co \Delta[1] \times A \rightarrow A \, .
\] 
This homotopy will be constructed so that it is $H$ on $\Delta[1] \times A[f]$ ,  it is the map 
\[
\Delta[1] \times \bar{B} \rightarrow \Delta[0]  \times \bar{B} \iso \bar{B} \rightarrow A
\] 
on $\Delta[0] \times \bar{B} $ (indeed they agree on $\Delta[1] \times B$) and it is the identity on $\Delta[0] \times A$.  This is achieved by taking a diagonal filling in the square:
\[
\xymatrix@C=1.5cm{
\big( \Delta[1] \times (\bar{B} \cup A[f]) \big)  \cup \big( \Delta[0] \times A \big) \ar@{>->}[d] \ar[r] & A \ar@{->>}[d] \\
\Delta[1] \times A \ar[r] \ar@{.>}[ur]^{H'} & X \, .
}\]
Such a diagonall filler exists since the map on the left-hand side is a trivial cofibration, being the 
 pushout-product of $Y_0 \co \Delta[0] \rightarrow \Delta[1]$ and the cofibration $\bar{B} \cup A[f] \rightarrow A$, and the map on the right-hand side is a fibration by assumption.

It remains to see that the map $H_{1} \co A \rightarrow A$ is indeed a retraction of $\bar{B} \rightarrow A$. We already know that the restriction of $H_{1}$ to $\bar{B}$ is  the inclusion of $\bar{B}$ in $A$, so it is enough to show that $H_{1}$ has values in $\bar{B}$. We also know that $H_{1}$ restricted to $A[f]$ takes values in $B \subseteq \bar{B}$. By definition of $\bar{B}$, the map $H_1$ factors into $\bar{B}$ if and only if it takes values in $\Pi_Y(B)$ when seen as a map to $\Pi_Y(A[f])$, and by adjunction this is the case if and only if the map corresponding to $H_1$, $A[f]= f^*(A) \rightarrow A[f]$ takes values in $B$, but this is indeed the case, as already mentioned above.

Since $\bar{B} \rightarrow A$ is a deformation retract, it is invertible in the homotopy category and so it is a weak equivalence. The construction above also shows that~$\bar{B}$ is retract of $A$ in $\SSet_{/X}$ and hence $\bar{q} \co \bar{B} \rightarrow X$ is a fibration because $p \co A \rightarrow X$ is.
\end{proof}

 Another proof of~\cref{cor:Pi_types_are_fibrant} is given in~\cite{GambinoN:anocp} modifying appropriately
the arguments in~\cite{SattlerC:equepu}.

\section{A  classifying small fibration between bifibrant objects}
\label{sec:unifbb}

The aim of this section is to begin establishing the results necessary to have a pseudo Tarski
universe satisfying the univalence axiom in the comprehension category of cofibrant simplicial
sets. For this, we need to define a cofibrant Kan complex $\U_c$ and a Kan 
fibration with cofibrant domain $\pi_c \co \UU_c \to \U_c$. The closure of the type-theoretic universe under various type-formation operations
corresponds to the closure of the class of maps arising as pullbacks of $\pi_c$ under the 
operations necessary to intepret the corresponding types. For this, it is convenient to 
consider~$\pi_c$ to be a fibration that classifies small fibrations between cofibrant simplicial
sets, \ie such that for every such fibration $p \co A \to X$ there exists a map $a \co X \to \U_c$ fitting in a pullback diagram of the form
\[
\xymatrix{
A \drpullback \ar[r] \ar@{->>}[d]_p   & \UU_c \ar@{->>}[d]^{\pi_c} \\
X \ar[r]_-a &  \U_c \, .}
\]
Here, note that we make no requirement for
the map $a$ to be unique, in contrast for example with the situation of the subobject classifier
in an elementary topos. Indeed, the map $a$ is not unique, but only unique  up to a contractible space
of choices, a fact that will be expressed by showing that $\pi_c$ is univalent. 

In this section, we limit ourselves to define $\pi_c \co \UU_c \to \U_c$ and prove that it
classifies small fibrations between cofibrant simplicial sets. We will then show 
that $\U_c$ is bifibrant, that $\UU_c$ is cofibrant  and that $\pi_c$ is univalent 
in~\cref{sec:fibrancy-and-univalence}. For the goal of this section, we proceed in two steps. First, we modify  the construction of the weak classifier for small fibrations in~\cite{voevodsky-simplicial-model} to obtain a small fibration $\pi \co \UU \to \U$ which classifies small fibrations with cofibrant fibers. Since the base of this fibration does not appear to be cofibrant,
we then consider a suitable cofibrant replacement  of~$\U$ and obtain the required fibration $\pi_c \co \UU_c \to \U_c$ via a pullback. 

\medskip

As a preliminary step, let us recall that a simplicial set $A$ is \emph{small}  if $A_n$ is a small set for every $[n] \in \Delta$ and that a map $p \co A \to X$ of simplicial sets is \emph{small} if for every $x \co \Delta[n] 
\to X$ the simplicial set~$A[x]$ given by the pullback square
\[
\xymatrix{
A[x] \ar[r] \ar[d] \drpullback & A \ar[d]^{p} \\
\Delta[n] \ar[r]_-{x} & X \, , }
\]
is small. Let us also recall the  construction of a map of simplicial sets $\rho \co \VV \to \V$ that classifies
small maps of simplicial sets, which is a special case of the results in~\cite{hofmann-streicher-universes} for arbitrary presheaf categories.  For this, we use the equivalence in~\eqref{equ:psh-slice-sset} and the notation associated to it.
The simplicial set $\V$ is defined by letting
\[
\mathsf{V}_n \defeq \{ F \co {\Delta_{/[n]}}^{\op} \to \Set \ | \ \pi_F \co \textstyle{\int F} \to \Delta[n] \text{ is a small
map} \}
\]
for $[n] \in \Delta$. The object $\VV$ and the map $\rho \co \VV \to \V$ are then defined in an evident way.

\bigskip

We now come to our first step, in which we define a small fibration $\pi \co \UU \to \U$ which 
classifies the class of small fibrations with cofibrant fibers. 

\begin{definition} We say that a map $p \co A \to X$ \emph{has cofibrant fibers} if for every $x \co \Delta[n] 
\to X$ the simplicial set~$A[x]$ given by the pullback square
\[
\xymatrix{
A[x] \ar[r] \ar[d] \drpullback & A \ar[d]^{p} \\
\Delta[n] \ar[r]_-{x} & X \, , }
\]
is cofibrant.
\end{definition}

\begin{lemma} \label{lem:Cofib_fiber} \hfill 
\begin{enumerate}[$(i)$]
\item If a map $p \co A \rightarrow X$ has cofibrant domain then it has cofibrant fibers.
\item If $X$ is cofibrant and $p \co A \rightarrow X$ has cofibrant fibers then $A$ is cofibrant.
\end{enumerate}
\end{lemma}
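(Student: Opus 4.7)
Both parts reduce to checking decidability of degeneracy, which is the defining property of cofibrancy.

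For part $(i)$, the plan is to view the pullback $A[x]$ as a sub-simplicial set of $A \times \Delta[n]$. By \cref{lem:cofibrant_fiber_product}$(i)$, $A \times \Delta[n]$ is cofibrant whenever $A$ is, so degeneracy in $A \times \Delta[n]$ is decidable. The key observation is that the inclusion $A[x] \hookrightarrow A \times \Delta[n]$ \emph{reflects} degeneracy: if $(a,\sigma) \in A[x]_m$ and $(a,\sigma) = \rho^\ast(a',\sigma')$ with $(a',\sigma') \in (A\times\Delta[n])_k$ and $\rho \co [m] \twoheadrightarrow [k]$, then $\rho^\ast p(a') = \rho^\ast x(\sigma')$, and since degeneracies in $\Delta$ are split epimorphisms they act as injective maps on simplicial sets, whence $p(a') = x(\sigma')$ and $(a',\sigma') \in A[x]_k$. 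Thus a cell of $A[x]$ is degenerate in $A[x]$ exactly when it is degenerate in $A\times\Delta[n]$, and degeneracy in $A[x]$ is decidable.

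For part $(ii)$, the plan is to decide degeneracy of an arbitrary cell $a \in A_n$ by transferring the question to a fiber. Let $x \defeq p(a) \in X_n$ and apply the Eilenberg-Zilber lemma in the cofibrant simplicial set $X$ to write uniquely $x = \tau^\ast x^\dag$ with $\tau \co [n] \twoheadrightarrow [k]$ a degeneracy and $x^\dag \in X_k$ non-degenerate. Then $(a,\tau)$ is an $n$-cell of the fiber $A[x^\dag]$, and I claim that
\[
a \text{ is degenerate in } A \iff (a,\tau) \text{ is degenerate in } A[x^\dag].
\]
The backward direction is immediate by projecting a witness. For the forward direction, if $a = \rho^\ast a'$ with $\rho \co [n] \twoheadrightarrow [m]$, then $\rho^\ast p(a') = \tau^\ast x^\dag$ forces, via uniqueness in the Eilenberg-Zilber decomposition of $\rho^\ast p(a')$, an equality $\tau = \nu \circ \rho$ for some $\nu \co [m] \to [k]$, and $(a,\tau) = \rho^\ast(a',\nu)$ is then a degeneracy in $A[x^\dag]$. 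Since $A[x^\dag]$ is cofibrant by hypothesis, degeneracy of $(a,\tau)$ is decidable, hence so is degeneracy of $a$; thus $A$ is cofibrant.

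\textbf{Main obstacle.} The delicate point in both parts is controlling how degeneracies in the pullback correspond to degeneracies in its factors without appealing to excluded middle. In $(i)$ this is handled by injectivity of $\rho^\ast$ for $\rho$ a degeneracy; in $(ii)$ it rests on the constructive Eilenberg-Zilber lemma, which is available precisely because $X$ is cofibrant. The uniqueness of the Eilenberg-Zilber decomposition in $X$ is what lets one turn an arbitrary degeneracy witness for $a$ into one in $A[x^\dag]$ compatible with the chosen $\tau$.
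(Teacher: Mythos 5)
Your proof is correct and follows essentially the same route as the paper's: for part~$(i)$ the paper simply cites \cref{lem:cofibrant_fiber_product}$(ii)$, which you re-derive in the special case $B = \Delta[n]$ by the same reasoning (degeneracy in the fiber product is detected in the ambient product, using that $\rho^*$ is injective for $\rho$ a degeneracy), and for part~$(ii)$ both arguments apply the constructive Eilenberg--Zilber lemma to $p(a)$ in the cofibrant base $X$ and transfer the degeneracy question to the unique lift $(a,\tau)$ in the fiber over the nondegenerate component.
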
 

\begin{proof} Part~(i) follows from \cref{lem:cofibrant_fiber_product}. For part~(ii), let $[n] \in \Delta$, $a \in A_n$. Since $X$ is cofibrant, by the constructive version of the Eilenberg-Zilber lemma we can write $p(a) \in X$ in a unique way as $p(a) =s^*(x)$, where $s \co [n] \to [k]$ is a degeneracy and 
$x \in X_k$ is a non-degenerate cell. Let $x \co \Delta[k] \rightarrow X$ be the corresponding map. We now form the pullback
\[
\xymatrix{
A[x] \ar[r]^w \ar[d] \drpullback & A  \ar[d]^{p}  \\
\Delta[k] \ar[r]_{x} & X \, . }
\]
By the universal property of the pullback, there is a unique cell $e \in A[x]_n$ such that $w(e)=a$, and the image of $e$ in $\Delta[k]$ is the cell $s \co [n] \to [k]$, whose image in $X$ are both equal to $p(a)=s^*(x)$.

By the assumption that $p$ has cofibrant fibers, the simplicial set $A[x]$ is cofibrant and hence it is decidable whether $e$ is degenerate or not. We claim that $a$ is degenerate if and only if $e$ is, which implies that it is decidable whether $a$ is degenerate.

 Indeed as $a = w(e)$ then if $e$ is degenerate so $a$ is. Conversely, assume that $a=\sigma^*(y_1)$ for a non-trivial degeneracy $\sigma$. Then $p(a)=\sigma^*(x_1)$, hence by the uniqueness part of the Eilenberg-Zilber lemma for $X$ one has that $s=s_1 \circ \sigma$ for some degeneracy $s_1$, and $x_1 = s_1^*(x')$. In particular, we get a unique cell $e_1$ of $A[x]$ whose image in $\Delta[n]$ and $X$ are  $s_1$ and $a_1$, respectively, whose images in $X$ are both equal to $x_1=s_1^*(x')$. Finally, the image of $p^*(e_1)$ in $\Delta[n]$ and~$A$ are  $p^* y_1 =a$ and $s_1 \circ p =s$, respectively, and hence $p^*(e_1) =e$, which proves that $e$ is degenerate as soon as $a$ is.
\end{proof}

Define an object $\U$ by letting, for $[n] \in \Delta$, 
\[
\U_n = \{ F \in \V_n \ | \ \pi_F \co \textstyle{\int F} \to \Delta[n] \text{ is a small fibration and 
$ \textstyle{\int F}$ is cofibrant} \} \, .
\]
Here, recall that by a fibration we  mean a function together with a fibration structure, \ie a choice of lifts against the horn inclusions $\Lambda^k[n] \to \Delta[n]$.
There is then an evident forgetful map $\U \to \V$, which is not injective. Here, $\U$ is a presheaf because $\V$ is a presheaf and the pullback of a fibration (with chosen lifts) is again a fibration (with chosen lifts), using \cref{lem:cofibrant_fiber_product} to obtain the required cofibrancy conditions.

We then define the map $\pi \co \UU \to \U$ via the pullback 
\begin{equation}
\label{equ:def-of-pi}
\begin{gathered}
\xymatrix{
\UU \ar[r] \ar[d]_\pi \drpullback  & \VV \ar[d]^\rho \\
\U \ar[r] & \V \, . }
\end{gathered}
\end{equation}

\begin{proposition} \label{thm:universe-u}  \hfill 
\begin{enumerate}[(i)] 
\item The map $\pi \co \UU \to \U$ is a small fibration with cofibrant fibers.
\item The map $\pi \co \UU \to \U$ classifies small fibrations with cofibrant fibers. More precisely, a map $v \co X \to \V$ can be lifted to a map $X \to \U$ if and only if it classifies a fibration with cofibrant fibers. Moreover, choice of a lift to $\U$ corresponds (functorially in $X$ and $V$) to the choice of a fibration structure on the map classified by $v$.

\end{enumerate}
\end{proposition}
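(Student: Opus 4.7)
For part (i), the smallness of $\pi$ follows immediately from that of $\rho$ by pullback-stability of smallness, and the cofibrancy of fibers is almost by fiat: given a classifying map $F \co \Delta[n] \to \U$, pasting pullbacks shows that $\UU \times_\U \Delta[n] \iso \VV \times_\V \Delta[n] \iso \int F$, which is cofibrant by the very definition of $\U_n$. The main content of (i) is the verification that $\pi$ admits lifts against every horn inclusion $h^k_n \co \Lambda^k[n] \to \Delta[n]$. Given such a lifting problem with bottom map $F \co \Delta[n] \to \U$ classifying a fibration $\pi_F \co \int F \to \Delta[n]$, unpacking the pullback definition of $\UU$ via the Yoneda lemma shows that the top map $\Lambda^k[n] \to \UU$ over $F$ is the same data as a partial section of $\pi_F$ along $h^k_n$, and that a lift of the original problem is exactly a section of $\pi_F$ over $\Delta[n]$ extending this partial section. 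Such a section exists because $\pi_F$ is itself a fibration (with codomain $\Delta[n]$, and the bottom edge of the inner lifting problem being the identity).

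For part (ii), the classifying property of $\rho$ shows that a map $v \co X \to \V$ corresponds to a small map $p \co A \to X$ presented by a pullback $A \iso X \times_\V \VV$; moreover, for each simplex $x \co \Delta[n] \to X$ the pullback $A[x]$ coincides with $\int v(x)$. Lifting $v$ through $\U \to \V$ to a map $\tilde v \co X \to \U$ amounts, simplexwise, to checking that each $\int v(x)$ is cofibrant (which is exactly the cofibrant-fibers condition on $p$) and to choosing a fibration structure on each $\pi_{v(x)} \co \int v(x) \to \Delta[n]$, all compatibly with the simplicial action of $X$. In one direction, pulling back the fibration $\pi \co \UU \to \U$ along a lift $\tilde v$ endows $p$ with a fibration structure by part (i). In the converse direction, a fibration structure on $p$ restricts along every $x$ to a fibration structure on the pullback $p[x]$, and these restrictions are automatically compatible with the simplicial action because restriction along a map $[m] \to [n]$ in $\Delta$ is given by composition of pullback squares. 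Functoriality in $X$ and in $v$ then follows from naturality of these pullback constructions.

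The delicate point, which I would carry out last, is verifying that the two assignments just described are mutually inverse, and thus yield the asserted bijection. This reduces to the observation that any lifting problem for $p$ of the form $(\Lambda^k[n] \to A, \Delta[n] \to X)$ factors uniquely through the pullback $p[x]$ with $x \co \Delta[n] \to X$ the bottom edge, so that specifying a fibration structure on $p$ is exactly the same data as specifying a simplexwise-compatible family of fibration structures on each $p[x]$. This is the main obstacle in the sense that one must spell out the coherence conditions carefully, but no new homotopy-theoretic input is needed; cofibrancy, being a property rather than structure, plays no role in this bookkeeping and need only be checked fibrewise.
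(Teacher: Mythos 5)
Your proposal is correct and follows essentially the same route as the paper. For part (i), you reduce a lifting problem against a horn inclusion to one for the fibration $\pi_F$ classified by the bottom map, and then invoke the chosen lifts that are part of the data of a cell of $\U$; the paper does exactly this (pasting the pullback square and using the chosen fibration structure on $p \co A \to \Delta[n]$). For part (ii), you argue cellwise that cofibrancy of each $A[x]$ together with a choice of fibration structure amounts to a lift of $v(x)$ to $\U$, check functoriality in $[n]$, and identify fibration structures on $p$ with compatible families of fibration structures on the $p[x]$; this is the paper's argument as well, with a bit more detail on the bookkeeping for the bijection that the paper leaves as an "easy check". No essential gap, and no genuinely different method.
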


\begin{proof} We prove the two claims separately.
For part~(i), consider a map $a \co \Delta[n] \rightarrow \U$ and the pullbacks
\[
\xymatrix{
A \ar[r] \ar[d]_p  \drpullback & \UU \ar[d]^{\pi} \ar[r]  \drpullback & \VV \ar[d]^{\rho} \\
\Delta[n] \ar[r]_{a} & \U \ar[r] & \V \, . }
\]
This shows that $p \co A \rightarrow \Delta[n]$ is isomorphic to $\pi_F \co \int F \rightarrow \Delta[n]$ in $\SSet_{/ \Delta[n]}$, where $F$ corresponds under the equivalence in~\eqref{equ:psh-slice-sset} to 
$a \co \Delta[n] \rightarrow \V$. Therefore, by definition of $\U$, $A$ is cofibrant and $p \co A \rightarrow \Delta[n]$ is a small fibration. This implies that $\pi \co \UU \rightarrow \U$ has cofibrant fibers. To show it is a fibration, we rewrite a general lifting problem against a horn inclusion $h^n_k \co \Lambda^k[n] \rightarrow \Delta[n]$ as follows:
\[
\xymatrix{
\Lambda^k[n] \ar[r] \ar[d]_{h^k_n} & A  \ar[d]^{p} \ar[r] & \UU \ar[d]^\pi \\
\Delta[n] \ar@{=}[r]  & \Delta[n] \ar[r]_a & \U \, ,  }
\]
and then use that $p \co A  \to \Delta[n]$ is a fibration, with chosen lifts, which hence produce a chosen lift in the diagram above.

For part~(ii), if a map $v \co X \to \V$ lifts to $\U$, then the map $A \to X$ classified by $v$ is a pullback of $\UU \to \U$ so it is a small fibration with cofibrant fibers. Conversely, let $p \co A \to X$ be a small fibration with cofibrant fibers. Being a small 
map, $p$ fits into a pullback of the form 
\[
\xymatrix{
A \ar[r] \ar[d]_p \drpullback  &  \VV \ar[d]^{\rho} \\
X \ar[r]_{a} & \V \, . }
\]

For each cell of $X$,  $x \co \Delta[n] \rightarrow X$ the pullback $A[x] \rightarrow \Delta[n]$ is a small fibration with cofibrant domain by the assumptions on $p$, which means, by the definition of $\U$ that the image of the cell $x$ in $\V$ lifts to $\U$. As one can easily check that this lifts to $\U$ are functorial in $[n]$, the map $X \rightarrow \V$ lifts to $\U$, and hence $A \rightarrow X$ is actually the pullback of $\UU \rightarrow \U$. One also easily check that there is a correspondence between fibration structures on the map $A \to X$ and compatible choices of fibration structures on each pullback along a map $\Delta[n] \to X$, so that the two construction explained above are inverse of each other. \qedhere
\end{proof}

\bigskip

We now  construct a fibration $\pi_c \co \UU_c \to \U_c$ that classifies small fibrations between 
cofibrant objects. In particular, the base $\U_c$ of this fibration will be cofibrant.  In order to do this, let $\U_c$ be a cofibrant replacement of~$\U$,  which comes with a trivial fibration
\begin{equation}
\label{equ:ucu}
\tau \co \U_c \rightarrow \U \, .
\end{equation}
We then define $\UU_c$ via the pullback
\begin{equation}
\label{equ:def-of-pi-c}
\begin{gathered}
\xymatrix{
\UU_c \ar[d]_{\pi_c} \ar[r] \drpullback & \UU \ar[d]^{\pi}  \\
\U_c \ar[r]_\tau & \U \, .}
\end{gathered}
\end{equation}
We now prove that $\pi_c \co \UU_c \to \U_c$ has the required properties.

\begin{theorem} \label{thm:universe-uc} 
\hfill 
\begin{enumerate}[(i)] 
\item $\pi_c \co \UU_c \to \U_c$ is a small fibration with cofibrant fibers and cofibrant codomain. 
\item The map $\pi_c \co \UU_c \to \U_c$ classifies small fibrations between cofibrant
objects, \ie if a map $p \co A \to X$, with $X$ cofibrant, is a small fibration between cofibrant objects
if and only there exists a pullback diagram of the form
\[
\xymatrix{
A \ar[r] \ar@{->>}[d]_p \drpullback & \UU_c \ar@{->>}[d]^{\pi_c} \\
X \ar[r]_a & \U_c \, . }
\]
\end{enumerate}
\end{theorem}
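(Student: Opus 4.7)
The plan is to deduce both parts from the properties already established for $\pi \co \UU \to \U$ in \cref{thm:universe-u}, together with the fact that $\tau \co \U_c \to \U$ is a trivial fibration with cofibrant domain.

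For part~(i), I would first note that $\U_c$ is cofibrant by construction, being a cofibrant replacement of $\U$. Next, I would exploit the pullback square \eqref{equ:def-of-pi-c}: since $\pi \co \UU \to \U$ is a small fibration with cofibrant fibers by \cref{thm:universe-u}~(i), and smallness, being a fibration (with chosen lifts), and having cofibrant fibers are all stable under pullback, the same properties transfer to $\pi_c$. Concretely, for cofibrant fibers: given a cell $x \co \Delta[n] \to \U_c$, the pullback of $\pi_c$ along $x$ agrees with the pullback of $\pi$ along $\tau \circ x \co \Delta[n] \to \U$, which is cofibrant by the corresponding property of $\pi$.

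For part~(ii), the forward direction (from a pullback diagram to $p$ being a small fibration between cofibrant objects) follows by pullback stability: the map $p$ inherits being a small fibration from $\pi_c$, and the cofibrancy of the domain $A$ follows from \cref{lem:Cofib_fiber}~(ii), using that $X$ is cofibrant and that $\pi_c$ has cofibrant fibers (hence so does $p$, by pullback stability of the fiber condition). For the reverse direction, suppose $p \co A \to X$ is a small fibration between cofibrant objects. By \cref{lem:Cofib_fiber}~(i), $p$ has cofibrant fibers, so \cref{thm:universe-u}~(ii) yields a classifying map $a \co X \to \U$ exhibiting $p$ as a pullback of $\pi$. I would then lift $a$ along the trivial fibration $\tau \co \U_c \to \U$ from \eqref{equ:ucu}: since $X$ is cofibrant, the map $\initial \to X$ is a cofibration, so there exists a diagonal filler $\tilde{a} \co X \to \U_c$ with $\tau \circ \tilde{a} = a$. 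Pasting the resulting pullback square along $\tilde{a}$ with the defining pullback \eqref{equ:def-of-pi-c} then presents $p$ as a pullback of $\pi_c$ along $\tilde{a}$, as required.

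The construction is essentially a bookkeeping exercise, and the only genuinely substantive step is the lift of the classifying map $a \co X \to \U$ along $\tau$, which is where the cofibrancy hypothesis on $X$ (built into the choice of $\U_c$ as the base for our universe) is used in an essential way. Once this lift is produced, the rest is pullback pasting and appeals to \cref{thm:universe-u} and \cref{lem:Cofib_fiber}. Note that, as in the classical setting, no uniqueness of $\tilde{a}$ is claimed; this non-uniqueness will later be controlled (up to a contractible space of choices) by the univalence of $\pi_c$, proved in \cref{sec:fibrancy-and-univalence}.
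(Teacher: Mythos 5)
Your proposal is correct and follows essentially the same route as the paper's own proof: part~(i) is obtained from the pullback-stability of smallness, fibration structure, and cofibrant fibers applied to the defining square for $\pi_c$; part~(ii) in the reverse direction uses \cref{lem:Cofib_fiber}~(i), \cref{thm:universe-u}~(ii), a lift of the classifying map along the trivial fibration $\tau$ using cofibrancy of $X$, and pasting of pullback squares, while the forward direction uses pullback stability and \cref{lem:Cofib_fiber}~(ii). The paper's proof of part~(i) is terser (it simply invokes \cref{thm:universe-u}), but the content is the same; if anything your explicit spelling-out of why $\pi_c$ inherits cofibrant fibers via $\tau \circ x$ is slightly clearer.
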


\begin{proof} For part~(i), $\U_c$ is cofibrant by construction and the rest of the claim follows from part~(ii) of~\cref{thm:universe-u}. 


For part~(ii), let $p \co A \to X$ be a small fibration between cofibrant objects, by part~(i) of \cref{lem:Cofib_fiber}, it has cofibrant fibers and hence by part~(ii) of \cref{thm:universe-u} there is a pullback diagram of the form 
\[
\xymatrix{
A \ar[r] \ar@{->>}[d]_p \drpullback & \UU \ar@{->>}[d]^{\pi} \\
X \ar[r]_a & \U \, . }
\]
Since $X$ is cofibrant, we have a diagonal filler in the diagram
\[
\xymatrix{
0 \ar[r] \ar[d] & \U_c \ar[d]^{\tau} \\
X \ar[r]_a \ar@{.>}[ur]^{a_c} & \U \, . }
\]
 We then obtain the diagram
\[
\xymatrix{
A \ar[r] \ar@{->>}[d]_p &  \UU_c \ar[r]  \ar@{->>}[d]^{\pi_c}  & \UU \ar@{->>}[d]^{\pi} \\
X \ar[r]_{a_c} & \U_c \ar[r]_{a} &  \U \, . }
\]
Here, the right-hand side square and the rectangle are pullbacks and therefore the left-hand
side square is also a pullback. Hence $p$ is indeed a pullback of $\UU_c \rightarrow \U_c$. Conversely, any pullback  $p \co A \rightarrow X$ of $\UU_c \rightarrow \U_c$ is a small fibrations with cofibrant fibers, hence by part~(ii) of \ref{lem:Cofib_fiber}, if $X$ is cofibrant, $p$ is small fibration between cofibrant objects.
\end{proof}

\section{Fibrancy and univalence of the universe}
\label{sec:fibrancy-and-univalence}

The aim of this section is to show that $\U_c$ is a cofibrant Kan complex and that $\pi_c \co
\UU_c \to \U_c$ is univalent.  For this, let us first return to consider the fibration $\pi \co \UU \to \U$ defined
via the pullback in~\eqref{equ:def-of-pi}. Let $\U^{\rightarrow}$ be the simplicial set whose $n$-simplices are triples of the form $(F_0, F_1, \phi)$, where $F_0$ and $F_1$ are $n$-simplices of $\U$, \ie functors
 \[
F_0, F_1 \co {\Delta_{/[n]}}^{\op} \rightarrow \Set
\]
and $\phi \co F_0 \Rightarrow F_1$ is a natural transformation. By the equivalence in~\eqref{equ:psh-slice-sset},
such triples correspond to commutative diagrams of the form
\begin{equation}
\label{equ:corresp}
\begin{gathered}
\xymatrix{ 
A_1 \ar[rr]^{f} \ar@{->>}[dr]_{p_1} & & A_2 \ar@{->>}[dl]^{p_2} \\
& \Delta[n] \, , & }
\end{gathered}
\end{equation}
where $p_1$ and $p_2$ are fibrations with cofibrant domain.

\begin{lemma} The map $(\mathsf{dom}, \mathsf{cod}) \co \U^{\rightarrow} \rightarrow \U \times \U$ is a fibration
\end{lemma}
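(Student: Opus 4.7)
The plan is to unpack a lifting problem against $h^k_n \co \Lambda^k[n] \hookrightarrow \Delta[n]$ in terms of simplicial data over $\Delta[n]$, and then reduce it to a lifting problem that can be solved by the restricted Frobenius property \cref{cor:Pi_types_are_fibrant}. Given any square
\[
\xymatrix{
\Lambda^k[n] \ar[r] \ar[d]_-{h^k_n} & \U^{\rightarrow} \ar[d]^-{(\mathsf{dom},\mathsf{cod})} \\
\Delta[n] \ar[r]_-{(F_0, F_1)} & \U \times \U \, ,
}
\]
the bottom map corresponds via~\eqref{equ:psh-slice-sset} and the definition of $\U$ to a pair of fibrations with cofibrant domain $p_0 \co A_0 \twoheadrightarrow \Delta[n]$ and $p_1 \co A_1 \twoheadrightarrow \Delta[n]$, while the top map, together with the commutativity of the square, amounts to a single morphism $\phi \co A_0|_{\Lambda^k[n]} \to A_1|_{\Lambda^k[n]}$ over $\Lambda^k[n]$, where $A_i|_{\Lambda^k[n]} \defeq A_i \times_{\Delta[n]} \Lambda^k[n]$. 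Producing a diagonal filler $\Delta[n] \to \U^{\rightarrow}$ is then equivalent to producing an extension $\tilde{\phi} \co A_0 \to A_1$ of $\phi$ over $\Delta[n]$.

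Next, I would construct $\tilde{\phi}$ as a diagonal filler in the auxiliary square
\[
\xymatrix{
A_0|_{\Lambda^k[n]} \ar[r]^-{\phi} \ar[d]_-{i} & A_1 \ar@{->>}[d]^-{p_1} \\
A_0 \ar[r]_-{p_0} \ar@{.>}[ur]^-{\tilde{\phi}} & \Delta[n] \, .
}
\]
Here $i$ is the pullback of the horn inclusion $h^k_n$ along $p_0$. Since $p_0$ is a fibration with cofibrant domain, the restricted Frobenius property \cref{cor:Pi_types_are_fibrant}(i) tells us that pullback along $p_0$ preserves trivial cofibrations, so $i$ is a trivial cofibration. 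Since $p_1$ is a fibration, a filler $\tilde{\phi}$ exists.

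To obtain a genuine \emph{chosen} filler in accordance with the convention of \cref{sec:preliminaries}, it is cleanest to reformulate the above via $\Pi$-types. By \cref{cor:Pi_types_are_fibrant}(ii) applied to $p_0$ and to the pullback fibration $p_0^* p_1 \co A_0 \times_{\Delta[n]} A_1 \to A_0$, the map $q \defeq \Pi_{p_0}(p_0^* p_1) \co \Pi_{A_0}(A_0 \times_{\Delta[n]} A_1) \to \Delta[n]$ is a fibration, with a fibration structure derived from those of $p_0$ and $p_1$. By adjunction, sections of $q$ over any $Y \to \Delta[n]$ correspond to maps $A_0 \times_{\Delta[n]} Y \to A_1$ over $\Delta[n]$; in particular, $\phi$ corresponds to a section $\bar{\phi} \co \Lambda^k[n] \to \Pi_{A_0}(A_0 \times_{\Delta[n]} A_1)$ of $q$ over $\Lambda^k[n]$, and the desired $\tilde{\phi}$ corresponds to a lift of $\bar{\phi}$ against $h^k_n$ into $q$. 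Such a chosen lift is supplied directly by the fibration structure of $q$.

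The main technical point, and the only obstacle, is the trivial-cofibrancy of the pullback $i$ (equivalently, that $q$ is a fibration). Both depend crucially on the restricted Frobenius property, which in the constructive setting requires $p_0$ to have cofibrant domain; this is precisely why we defined $\U$ to classify fibrations with cofibrant fibers rather than arbitrary small fibrations, and the argument would break down for the analogous map $(\mathsf{dom},\mathsf{cod}) \co \V^{\rightarrow} \to \V \times \V$.
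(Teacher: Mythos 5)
Your proof is correct and takes essentially the paper's approach: the paper observes globally that $\U^{\rightarrow} = \Pi_p(\UU \times \UU)$ for the evident projection $p \co \UU \times \U \to \U \times \U$, and handles the failure of cofibrancy of $\UU \times \U$ by reducing to lifting problems over cofibrant $\Delta[n]$, exactly as in the proof of part~(i) of \cref{thm:universe-u}. You carry out that reduction explicitly, and your $q = \Pi_{p_0}(p_0^* p_1)$ is precisely the pullback of $\U^{\rightarrow} \to \U \times \U$ along the classifying map $(F_0, F_1) \co \Delta[n] \to \U \times \U$, so both arguments rest on the same application of \cref{cor:Pi_types_are_fibrant} to a fibration with cofibrant domain over a simplex.
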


\begin{proof} Observe that $\U^{\rightarrow}$ is exactly $\Pi_p(\UU \times \UU)$, 
where $p \co \UU \times \U \rightarrow \U \times \U$ is the evident map. 
It follows from \cref{cor:Pi_types_are_fibrant} that $\U^{\rightarrow} \rightarrow \U \times \U$ is a fibration.
More precisely, \cref{cor:Pi_types_are_fibrant} implies that any pullback of  $\U^{\rightarrow} \rightarrow \U \times \U$ to a cofibrant
$X \rightarrow \U \times \U$ is a fibration (due to the cofibrancy assumption of  \cref{cor:Pi_types_are_fibrant}),
but this is sufficient to prove that $\U^{\rightarrow} \rightarrow \U \times \U$ is a fibration,
as in the argument for part~(i) of  \cref{thm:universe-u}.
\end{proof}

We recall that $\BFFib_{/X}$ denotes the full subcategory of $\SSet_{/X}$ of fibrations with cofibrant domain. Given $f \co A \to B$ an arrow in $\BFFib_{/X}$ we now construct the object $\Iseq(f)$ in $\BFFib_{/X}$ of weak equivalences structures on $f$. We will use the 
characterisation of weak equivalences as maps that have both a left inverse and a right inverse, \ie what are called bi-invertible maps in \cite[Section 4.3]{hottbook}. 
For this, we essentially follow \cite{ShulmanM:unifid} and \cite[Section 1.4]{StenzelR:unirca}, which in turn are a category-theoretic counterpart of the type-theoretic
treatment in \cite{hottbook}. We cannot use the results in \cite{ShulmanM:unifid} and \cite[Section 1.4]{StenzelR:unirca} directly since they both work under 
assumptions on path objects that would not be satisfied for us without restricting to cofibrant objects and assume that $\Pi$-types are given by dependent products, i.e. right adjoints to pullback functors, which we do not have if we restrict to cofibrant objects. For these reasons, we need to prove \cref{thm:black-box} explicitly. However, since this is very similar to the references cited above, we will not give all details.

Given a map $f:A \to B$ in $\BFFib_{/X}$, a right inverse for $f$, is a map $h:B \to A$ in $\BFFib_{/X}$ and a homotopy $f \circ h \sim \Id_B$, given by an arrow $B \to \Path_{/X}(B)$ in $\BFFib_{/X}$. Similarly, a left inverse for $f$ is an arrow $g:B \to A$ together with a homotopy $g \circ f \sim \Id_A$, given by an arrow $A \to \Path_{/X}(A)$ in $\BFFib_{/X}$.

Using dependent products (without taking cofibrant replacements), we can construct fibrant objects $\Linv(f) \twoheadrightarrow X$ and $\Rinv(f) \twoheadrightarrow X$, that are respectively the objects of left and right inverse of $f$. Explicitely, for an object $Y \in \SSet_{/X}$ a map $Y \to \Linv(f)$ is the same as a left inverse to the map $A \times_{X} Y \to B \times_X Y$ in $\BFFib_{/Y}$.

We then define $\Iseq(f) = \Linv(f) \times_X \Rinv(f)$. It is also fibrant over $X$, and an arrow $Y \to \Iseq(f)$ in $\SSet_{/X}$ is the same as the data of both a left inverse and a right inverse of the pullback of $f$ to $\SSet_{/Y}$.

\begin{proposition} \label{thm:black-box}
Let $f:U \to V$ be an arrow in $\BFFib_{/X}$ between two fibrations with cofibrant fibers $U \twoheadrightarrow X$ and $V \twoheadrightarrow X$. Let $A \hookrightarrow B$ a cofibration between cofibrant objects in $\SSet_{/X}$, then a lifting problem

\[
\xymatrix{ A \ar[r] \ar[d]^i & \Iseq(f) \ar[d] \\
B \ar[r]_{p} & X
}
\]
has a diagonal filling if and only if the pullback of $f$ to $\SSet_{/B}$ is a weak equivalence.
\end{proposition}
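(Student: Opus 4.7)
The plan is to reformulate the lifting problem as a section-extension problem on a pullback of $\Iseq(f)$ to $B$, and then to reduce the whole statement to a single principle: $\Iseq(f_B) \to B$ is a trivial fibration exactly when $f_B$ is a weak equivalence. First I would observe that, by the universal property of the pullback, a map $B \to \Iseq(f)$ lying over $p \co B \to X$ corresponds to a section of $\Iseq(f_B) \to B$, where $f_B \co U \times_X B \to V \times_X B$ is the pullback of $f$ to $\SSet_{/B}$; the analogous statement holds with $A$ in place of $B$, and compatibility with $i$ says exactly that the section over $B$ restricts to the given one over $A$. So the lifting problem becomes the extension problem for sections of $\Iseq(f_B) \to B$ along the cofibration $i \co A \hookrightarrow B$.

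For the only-if direction, such a section yields, by the definition $\Iseq(f_B) = \Linv(f_B) \times_B \Rinv(f_B)$, a left inverse $g$ together with a homotopy $g \circ f_B \sim \Id$, and a right inverse $h$ together with a homotopy $f_B \circ h \sim \Id$, all in $\BFFib_{/B}$. In the homotopy category of $\BFFib_{/B}$, the map $f_B$ thereby acquires both a section and a retraction, which forces it to be an isomorphism there; equivalently, $f_B$ is a weak equivalence.

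For the if direction, I would show that when $f_B$ is a weak equivalence the map $\Iseq(f_B) \to B$ is a trivial fibration; the diagonal filler then follows at once from the lifting property of the cofibration $i$ (between cofibrant objects) against trivial fibrations. Since $\Iseq(f_B) = \Linv(f_B) \times_B \Rinv(f_B)$ and trivial fibrations are pullback-stable, it suffices to check the analogous property for each of $\Linv(f_B) \to B$ and $\Rinv(f_B) \to B$ separately. By symmetry, consider $\Rinv(f_B) \to B$: a section of it over a cofibrant object $C \in \SSet_{/B}$ is precisely a right inverse up to homotopy of the pullback $f_C$, and when $f_C$ is a weak equivalence the standard argument used in \cite{ShulmanM:unifid} and \cite[Section~1.4]{StenzelR:unirca} shows that the space of such right inverses is contractible. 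Transcribed into our setting, this argument uses the mapping path space of \cref{thm:MainPathObject} to realise the defining homotopy $f_C \circ h \sim \Id$, together with the preservation results of \cref{cor:Pi_types_are_fibrant} and \cref{Lemma:ForTheExtProperty}, to ensure that the dependent products involved stay inside the bifibrant world.

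The main obstacle I expect is precisely this last step: adapting the standard proof that bi-invertibility is a \emph{propositional} structure to our cofibrancy-constrained setting. Unlike in the references above, dependent products preserve fibrations only along fibrations with cofibrant domain, so each intermediate construction must be checked to land inside the bifibrant world; the usual iterated path-space argument for $\Rinv(f_B) \to B$ being a trivial fibration will have to be rearranged so as to avoid forming dependent products along fibrations that may violate the cofibrancy hypothesis, typically by appealing to the extension of trivial fibrations provided by \cref{Lemma:ForTheExtProperty}(iv) wherever one would classically invoke an unconstrained universal property.
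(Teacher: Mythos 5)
Your skeleton does match the paper's: the easy direction (a lift over $B$ yields both a left and a right inverse of $p^*(f)$ between bifibrant objects of $\SSet_{/B}$, hence $p^*(f)$ is a weak equivalence) is exactly the paper's argument, and the converse is split into $\Linv$ and $\Rinv$. But the crux of the converse is left as an acknowledged obstacle rather than proved, and that is where the content of this proposition lies. For $\Rinv$, the missing idea is the concrete identification of a right-inverse datum with a \emph{section} of the fibration $\pi_f \co U \times_V \Path_{/X}(V) \to V$. Since $\pi_{p^*f} \cong p^*(\pi_f)$ and, by 2-out-of-3 applied to $U \to U\times_V \Path_{/X}(V) \to V$, this map becomes a trivial fibration once $p^*(f)$ is a weak equivalence, the required extension is a single lifting of the cofibration $i^*V \to p^*V$ (a cofibration by \cref{thm:cof-pbk}, using that $V\times_X B$ is cofibrant by \cref{lem:Cofib_fiber}) against that trivial fibration. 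Invoking the ``standard argument'' of Shulman and Stenzel that the space of right inverses is contractible is essentially a restatement of what has to be proved, and the paper points out that those arguments do not apply under the present cofibrancy constraints; moreover the tool you anticipate, part~(iv) of \cref{Lemma:ForTheExtProperty}, plays no role here --- what is actually needed is \cref{thm:cof-pbk}, cofibrancy of the relevant fiber products, and the model-structure lifting properties.

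The second gap is the phrase ``by symmetry'': the left-inverse case is not symmetric to the right-inverse case. A left inverse is a \emph{retraction} of the mapping-cylinder inclusion $\sigma_f \co U \to \Delta[1]\times U \coprod_U V$, not a section of a map over $V$, and the extension problem is solved by a lifting of the opposite shape: one lifts the map $p^*U \coprod_{i^*U} i^*(\Delta[1]\times U \coprod_U V) \to p^*(\Delta[1]\times U \coprod_U V)$ against the \emph{fibration} $p^*U \to B$, checking that this comparison map is a weak equivalence by 2-out-of-3 (both $p^*\sigma_f$ and the pushout of $i^*\sigma_f$ are trivial cofibrations) and a cofibration via the decidability furnished by the ambient cofibrations. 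This dual construction, with its own cofibrancy bookkeeping, is exactly what your sketch omits. Finally, note that you aim for the stronger statement that $\Iseq(f_B)\to B$ is a trivial fibration, whereas the paper only constructs the single lift along $A \to B$; your stronger claim is provable by running the two constructions above against the generating cofibrations over $B$, but only once those two identifications are in place, so it does not shortcut the work.
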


\begin{proof} Let  $p^* \co \SSet_{/X} \to \SSet_{/B}$ be the pullback along $p$.
 By construction, if we have a lift $B \to \Iseq(f)$ then we get both a left and a right inverse to $p^*(f) \co p^*(U) \to p^*(V)$, which is an arrow between bifibrant objects in $\SSet_{/B}$. This implies that $p^*(f)$ is a weak equivalence, and concludes the proof of one implication.

Conversely, we assume that $p^*(f): p^*(U) \to p^*(V)$ is a weak equivalence. Note that as~$U$ and~$V$ are assumed to have cofibrant fibers over $X$ and $A$ and $B$ are cofibrant, all the objects $p^*U$, $i^*U$, $p^*V$ and $i^* V$ are cofibrant.

Because $\Iseq(f)$ is constructed as $\Rinv(f) \times_X \Linv(f)$, we only need to construct lifts to $\Linv(f)$ and $\Rinv(f)$ separately. We start with the lift to $\Rinv(f)$. A right inverse to $f : U \to V$ in $\SSet_{/X}$ can be described as a section to a fibration $\pi_f: U \times_V \Path_{/X}(V) \to V$. Indeed, in terms of generalised elements it should associate to each $v \in V$ an element $u\in U$ together with a homotopy between $f(u)$ and $v$. The construction  $f \mapsto \pi_f$ is stable under pullback in the sense that $\pi_{p^* f} \cong p^*(\pi_f)$. Morever, if $f$ is an equivalence $\pi_f$ also is, and hence it is a trivial fibration, in particular $p^*(\pi_f)$ and $i^*(\pi_f)$ are trivial fibrations. Now construing a lift to $\Rinv(f)$ is the same as constructing a section of $p^*(\pi_f)$ to $B$ (which is a trivial fibration) that extends the given section of $i^*(\pi_f)$. By our cofibrancy assumption, and \cref{thm:cof-pbk}, the horizontal maps in the square:
\[
\xymatrix{  i^*\left( U \times_V \Path_{/X}(V) \right) \ar[r] \ar[d]^{i^* \pi_f} & p^*\left( U \times_V \Path_{/X}(V) \right)  \ar[d]^{p^*(\pi_f)} \\
i^*V \ar[r] & p^* V
}
\]
are cofibrations, and so the desired section on $p^*\pi_f$ extending the one on $i^*\pi_f$ can be constructed using the right lifting property of the trivial fibration $p^* \pi_f \simeq \pi_{p^* f}$ against $p^*V \to i^* V$ .

We now move to the lift to $\Linv(f)$. A left inverse of $f$, can be described dually as a retraction of the map $\sigma_f: U \hookrightarrow \Delta[1] \times U \coprod_U V$ in $\SSet_X$. Similarly to the previous case, the construction $f \mapsto \sigma_f$ is also stable under pullback (i.e. $p^*\sigma_f \cong \sigma_{p^* f}$), the map $\sigma_f$ is cofibration when $U$ and $V$ are cofibrant and is a trivial cofibration if in addition $f$ is a weak equivalence. So, as before, $p^*(\sigma_f)$ and $i^*(\sigma_f)$ are trivial cofibrations. Thus, the lift that we are trying to construct is equivalent to a retraction in $\SSet_{/B}$ of the right map in the square below that extends the retraction already existing in $\SSet_{/A}$ of the left map:

\[
\xymatrix{
i^* U \ar[r] \ar[d]^{i^* \sigma_f} & p^* U   \ar[d]^{p^*(\sigma_f)} \\
i^*\left( \Delta[1] \times U \coprod_U V \right) \ar[r] & p^* \left( \Delta[1] \times U \coprod_U V \right) \, .
}
\]

This can be achieved by chosing a lift in:

\[
\xymatrix{  p^* U \coprod_{i^* U} i^*\left( \Delta[1] \times U \coprod_U V \right) \ar[r] \ar[d] & p^* U   \ar[d] \\
p^* \left( \Delta[1] \times U \coprod_U V \right) \ar[r] \ar@{.>}[ur] & B
}
\]
Here, the map on the right is a fibration. The left arrow is a weak equivalence by $2$-out-of-$3$, since both maps $p^*(\sigma_f)$ and (the pushout of) $i^* \sigma_f$ are trivial cofibration. It is also a cofibration since it is a monomorphism and the decidability condition hold because all the maps in the previous square are cofibrations.
\end{proof}

Finally, we define $\Weq(\U) \twoheadrightarrow \U^\to$ by applying this construction $\Iseq(f)$, to the universal arrow between small fibrations with cofibrant fibers, which lives over $\U^\to$. By \cref{thm:black-box}, we have the following result.

\begin{proposition} \label{prop:Weq_classify_Weq}
The map $\Weq(\U) \to \U^\to$ is a fibration and given $A \rightarrowtail B$ a cofibration between cofibrant objects, a lifting problem
\[
\xymatrix{ A \ar[r] \ar@{>->}[d] & \mathsf{Weq}(\U) \ar[d] \\
B \ar@{.>}[ur] \ar[r]_{p} & \U^\to
}
\]
has a dotted diagonal filling if and only if the map classified by $p$ is a weak equivalence. \qed
\end{proposition}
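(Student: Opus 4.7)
The plan is to reduce both claims to applications of \cref{thm:black-box}. Let $f_\U$ denote the universal arrow between the classified small fibrations with cofibrant fibers over $\U^\to$, so that $\Weq(\U) = \Iseq(f_\U)$ by construction. A key preliminary observation is that $\Iseq$ is stable under pullback along morphisms of its base, since it is built from the pullback-stable dependent product together with the constructions $\Linv$ and $\Rinv$. Thus for any $p \co Y \to \U^\to$, the pullback of $\Weq(\U) \to \U^\to$ along $p$ is canonically isomorphic to $\Iseq(p^*f_\U)$ in $\SSet_{/Y}$.

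For the ``if and only if'' part of the statement, take a cofibration $A \hookrightarrow B$ between cofibrant objects and let $f_p \defeq p^*f_\U$, which is the arrow over $B$ classified by $p$. Since $f_\U$ has cofibrant fibers and $B$ is cofibrant, part~(ii) of \cref{lem:Cofib_fiber} places $f_p$ in $\BFFib_{/B}$. By pullback stability, the original lifting problem is equivalent to one of the form
\[
\xymatrix{
A \ar[r] \ar@{>->}[d] & \Iseq(f_p) \ar[d] \\
B \ar@{=}[r] \ar@{.>}[ur] & B
}
\]
in $\SSet_{/B}$, and \cref{thm:black-box} (applied with the identity as the bottom map) says this has a filler exactly when $f_p$, seen as its own pullback to $\SSet_{/B}$, is a weak equivalence. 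This is precisely the desired characterisation.

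For the fibrancy of $\Weq(\U) \to \U^\to$, it suffices to check the right lifting property against a horn inclusion $\Lambda^k[n] \hookrightarrow \Delta[n]$. Pulling back along the bottom map $p \co \Delta[n] \to \U^\to$ again puts us in the setting of \cref{thm:black-box}, now with $f_p \in \BFFib_{/\Delta[n]}$. Applying the ``only if'' direction of \cref{thm:black-box} to the given top morphism together with $\emptyset \hookrightarrow \Lambda^k[n]$ shows that the restriction of $f_p$ to $\Lambda^k[n]$ is a weak equivalence. Now $\Lambda^k[n] \hookrightarrow \Delta[n]$ is a trivial cofibration, and the domain and codomain of $f_p$ are fibrations over $\Delta[n]$, so by right properness both $\dom(f_p)|_{\Lambda^k[n]} \to \dom(f_p)$ and $\cod(f_p)|_{\Lambda^k[n]} \to \cod(f_p)$ are weak equivalences; a two-out-of-three argument then upgrades the weak equivalence $f_p|_{\Lambda^k[n]}$ to a weak equivalence $f_p$. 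The ``if'' direction of \cref{thm:black-box} now produces the required diagonal filler. The main subtlety to keep in mind is that $\U^\to$ is not known to be cofibrant, which is why the argument must proceed by pullback to a cofibrant base rather than by a direct application of \cref{thm:black-box} over $\U^\to$.
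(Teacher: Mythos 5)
Your argument is correct and follows the same strategy the paper intends, namely reduction to \cref{thm:black-box} together with pullback-stability of $\Iseq$. The paper's proof is essentially a one-line citation of \cref{thm:black-box}, whereas you spell out what that citation actually amounts to; this is a useful clarification, since for the fibrancy of $\Weq(\U) \to \U^\to$ the reduction is genuinely not immediate. One must first extract from the given horn $\Lambda^k[n] \to \Weq(\U)$ that the pulled-back map $f_p$ is a weak equivalence over $\Lambda^k[n]$, then promote this to a weak equivalence over all of $\Delta[n]$ via right properness and two-out-of-three, and only then invoke the ``if'' direction of \cref{thm:black-box} to produce the filler; your proof makes this chain explicit while the paper elides it. A small remark on your closing sentence: the precise obstacle to a literal application of \cref{thm:black-box} over $\U^\to$ is not really that $\U^\to$ itself fails to be cofibrant, but that the universal arrow $f_\U$ cannot be placed in $\BFFib_{/\U^\to}$ because $\UU$ has no reason to be cofibrant (it sits over the non-cofibrant $\U$). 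After pulling back to a cofibrant base $B$, \cref{lem:Cofib_fiber}(ii) upgrades cofibrant fibers to cofibrant domain, so $f_p$ does land in $\BFFib_{/B}$ and the hypotheses are met literally. Alternatively one may observe that the proof of \cref{thm:black-box} only ever uses cofibrant fibers rather than cofibrant domain, so a direct application is also defensible; either way, your pullback step handles the issue correctly.
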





\begin{theorem} \label{thm:fibrancy-of-u-and-uc} \hfill 
\begin{enumerate}[(i)] 
\item The simplicial set $\U$ is fibrant. 
\item The simplicial set $\U_c$ is bifibrant. 
\end{enumerate}
\end{theorem}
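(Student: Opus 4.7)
The strategy is to derive part (ii) from part (i) by a short composition argument: $\U_c$ is cofibrant by construction (as a cofibrant replacement of $\U$), and once $\U$ is known to be fibrant, the trivial fibration $\tau \co \U_c \to \U$ is in particular a fibration, so the composite $\U_c \to \U \to \Delta[0]$ is a fibration and $\U_c$ is fibrant. Hence the substantive content is in part (i).

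For part (i), given a horn inclusion $h = h^k_n \co \Lambda^k[n] \to \Delta[n]$ and a map $u \co \Lambda^k[n] \to \U$, my plan is to unpack $u$ via \cref{thm:universe-u}(ii) as a small fibration $p \co A \to \Lambda^k[n]$ with cofibrant fibers (hence cofibrant domain by \cref{lem:Cofib_fiber}(ii)) equipped with a chosen fibration structure, and then construct a suitable extension $\bar{p} \co \bar{A} \to \Delta[n]$ of the same kind. I would first factor the composite $A \to \Lambda^k[n] \hookrightarrow \Delta[n]$ as a trivial cofibration $A \hookrightarrow A'$ followed by a fibration $p' \co A' \to \Delta[n]$; since cofibrations preserve cofibrancy, $A'$ is cofibrant. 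By right properness, the pullback $A'[h] \to A'$ of $h$ along the fibration $p'$ is a weak equivalence, and a 2-out-of-3 argument applied to $A \to A'[h] \to A'$ shows that the induced map $A \to A'[h]$ is a weak equivalence. Applying the weak equivalence extension property (\cref{Prop:Homotopy_ext_prop}) then produces a fibration $\bar{p} \co \bar{A} \to \Delta[n]$ with cofibrant domain together with an identification $\bar{A}[h] \cong A$ over $\Lambda^k[n]$; the smallness of $\bar{p}$ follows from propositional resizing.

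The main obstacle I anticipate is that lifts to $\U$ carry \emph{fibration structures} by \cref{thm:universe-u}(ii), so for the classifying map $\bar{u} \co \Delta[n] \to \U$ of $\bar{p}$ to satisfy $\bar{u} \circ h = u$ strictly (not merely up to isomorphism), the fibration structure on $\bar{p}$ must pull back along $h$ to the given structure on $p$, not merely to an equivalent one. I plan to handle this by modifying the fibration structure produced by the weak equivalence extension via a case split on each lifting problem against a horn inclusion $\Lambda^j[m] \to \Delta[m]$ with base map $\beta \co \Delta[m] \to \Delta[n]$: since $\Lambda^k[n] \hookrightarrow \Delta[n]$ is a levelwise decidable sub-simplicial set, it is decidable whether $\beta$ factors through $h$. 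When it does, the horn data automatically factors through $\bar{A}[h] \cong A$ and I use the given fibration structure on $p$ to produce a filler in $A \subseteq \bar{A}$; otherwise I use the fibration structure on $\bar{p}$ coming from the weak equivalence extension construction. This merged structure pulls back along $h$ to precisely the original structure on $A$, yielding the desired strict equality $\bar{u} \circ h = u$.
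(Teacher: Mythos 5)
Your proof of part~(i) is correct, and it takes a genuinely different route into the weak equivalence extension property than the paper. The paper first records a preliminary observation: using that $(s,t)\co\mathsf{Weq}(\U)\to\U\times\U$ is a fibration, a homotopy between classifying maps $a_1,a_2\co X\rightrightarrows\U$ produces a literal weak equivalence over $X$ between the classified objects. It then exhibits $h^k_n$ as a retract of the pushout-product $\delta^\epsilon\hattimes h^k_n$ and precomposes the classifying map with the resulting retraction $(\Delta[1]\times\Lambda^k[n])\cup\Delta[n]\to\Lambda^k[n]$, so that the restriction to $\Delta[n]$ gives a candidate fibration $A\to\Delta[n]$ and the cylinder part gives, via the observation, a weak equivalence $B\to A[h^k_n]$ over $\Lambda^k[n]$. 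You instead factor $A\to\Delta[n]$ as a trivial cofibration followed by a fibration $p'\co A'\to\Delta[n]$, and extract the weak equivalence $A\to A'[h]$ from right properness plus 2-out-of-3. Both arguments then invoke \cref{Prop:Homotopy_ext_prop} with the same inputs. Your setup is shorter and more self-contained, as it needs only the model structure and the extension property, while the paper's deliberately reuses the $\mathsf{Weq}(\U)$ machinery that it is about to deploy again for univalence. Part~(ii) is handled identically in both.

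Two further remarks. On strictness: you are right that the fibration structure on $\bar p$ must restrict on the nose along $h$, and your decidable case split (using that $\Lambda^k[n]\subset\Delta[n]$ is a levelwise complemented inclusion of finite decidable simplicial sets) is correct and more explicit than the paper's brief ``we can use $\bar q$ to construct a map $b$ which extends the one we started from.'' Be aware, though, that the same issue occurs one level down: the underlying classifying map to $\V$ must also restrict strictly, so you should additionally arrange $\bar A[h]$ to \emph{equal} $A$ rather than be merely isomorphic to it --- a routine transport along the isomorphism provided by \cref{Prop:Homotopy_ext_prop}, but one that deserves a sentence, since it is exactly the step that makes the Hofmann--Streicher universe behave strictly. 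On smallness: the appeal to propositional resizing is safe but likely stronger than necessary here; $\bar A$ is built from $A'$ (small by the small object argument) and dependent products $\Pi_h(-)$ whose fibres over a cell $\Delta[m]\to\Delta[n]$ are sets of sections over the finite decidable simplicial sets $h^*\Delta[m]$, so its smallness follows from closure of the inaccessible $\mathsf{u}$ under the relevant bounded operations.
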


\begin{proof} We prove part~(i). Since
$(s, t) \co \mathsf{Weq}(\U) \rightarrow \U \times \U$ is a fibration, for any cofibrant 
simplicial set $X$,  maps $a_1 \, , a_2 \co X \rightrightarrows \U$ and homotopy $h \co \Delta[1] \times X \rightarrow \U$ from $a_1$ to $a_2$, there is a weak equivalence in $\SSet_{/X}$ between the objects classified by $a_1$ and $a_2$, constructed as follows. For this, we first consider a diagonal filler in the
diagram
\[
\xymatrix{ X \ar[r]^{i_1} \ar@{>->}[d]_{\delta^0} & \mathsf{Weq}(\U) \ar@{->>}[d] \\
\Delta[1] \times X \ar[r]_{(a_1,h)} \ar@{.>}[ur] & \U \times \U \, .
}
\]
Here, $i_1$ denotes a map classifying the identify of the object classified by $a_1$. By $a_1$ in the first component of the bottom arrow we mean the composite $\Delta[1] \times X \rightarrow X \rightarrow \U$. Composing the dotted arrow with $\delta^1$ gives us a map $X \rightarrow  \mathsf{Weq}(\U)$ whose projection to~$\U \times \U$ if $(a_1,a_2)$, \ie it classifies a weak equivalence between the objects classified by $a_1$ and $a_2$. One can do the same thing with $\delta^0$ and $\delta^1$ exchanged to get a weak equivalence in the other direction.

Using this fact, we can now prove that $\U$ is fibrant. A map $h^k_n \co \Lambda^k[n] \rightarrow \U $ classifies a fibration $q \co B \rightarrow \Lambda^k[n]$ with cofibrant domain. The horn inclusions $h^k_n \co \Lambda^k [n] \rightarrow \Delta[n]$ fits into retract diagrams:
\[
\xymatrix{\Lambda^k[n] \ar@{>->}[d] \ar@{>->}[r] & \big( \Delta[1] \times \Lambda^k[n] \big) \cup \Delta[n] \ar@{>->}[d] \ar[r] & \Lambda^k[n] \ar@{>->}[d]  \\
\Delta[n] \ar@{>->}[r] & \Delta[1] \times \Delta[n] \ar[r] & \Delta[n] \, .
}\]

Where the map $\Delta[0] \rightarrow \Delta[1]$ can be either $\delta^0$ or $\delta^1$ depending on whether $0<k$ or $k<n$. See for example the last part of the proof of theorem 3.2.3 in \cite{joyal-tierney:simplicial-homotopy-theory}.

By the observation above, the composite map $\big( \Delta[1] \times \Lambda^k[n]  \big) \cup \Delta[n] \rightarrow \Lambda^k[n] \to \U$  gives a solid diagram of the form
\[ 
\xymatrix{
  B
  \ar@{.>}[rr]
  \ar[dr]
  \ar@{->>}[dd]
&&
  \bar{B}
  \ar@{.>}[dr]
  \ar@{.>>}[dd]|{\hole}
&\\&
  A'
  \ar[rr]
  \ar@{->>}[dl]
&&
  A
  \ar@{->>}[dl]
\\
  \Lambda^k[n]
  \ar[rr]_{h^k_n}
&&
  \Delta[n]
&
}
\] 
So we can construct a fibration  $\bar{q} \co \bar{B} \to \Delta[n]$ with cofibrant domain whose pullback 
along~$h^k_n$ is isomorphic to $q \co B \to \Lambda^k_n$. The map $b \co \Delta[n] \rightarrow \U$ classifying $\bar{q}$ gives the lift we are looking for. More precisely, we can use $q  \co \bar{B} \to
\Delta[n]$ to construct  a map $b \co \Delta[n] \rightarrow \U$ which extends the one we started from and classifies 
an object isomorphic to $\bar{B}$.

Part~(ii) follows immediately from part~(i) since $\tau \co \U_c \to \U$ of \eqref{equ:ucu} is a trivial fibration.
\end{proof}

We now wish to define  what it means for a small fibration with cofibrant fibers, and in particular for a small fibration between cofibrant objects, to  be univalent. For this, we fix such a fibration $p \co A \to X$ and construct an object $\Weq(p) \to X \times X$ that represents weak
equivalences between fibers of $p$, in the sense that maps $Y \to \Weq(p)$ in $\SSet_{/X \times X}$ are in bijective correspondence with tuples $(x_1, x_2, w,e)$ consisting of two map $x_1 \, , x_2 \co Y \to X$, a map $w \co A[x_1]
\to A[x_2]$ in $\SSet_{/Y}$, and $e$ a weak equivalence structure on $w$. The required object $\Weq(p)$ can be constructed as the pullback
\[
\xymatrix{
\Weq(p) \drpullback \ar[r] \ar@{->>}[d] & \Weq(\U) \ar@{->>}[d]^{(s,t)} \\
X \times X \ar[r]_{a \times a} & \U \times \U \, ,}
\]
where $a \co X \to \U$ is a classifying map for the small fibration $p \co A \to X$, which exists by our assumption
that $A$ and $X$ are cofibrant and part~(iii) of~\cref{thm:universe-u}. The verification that $\Weq(p)$
has the required universal property is an easy calculation, which we leave to the readers. There is an
evident map $i \co X \to \Weq(p)$ corresponding via the universal property of $\Weq(p)$ to the tuple of identity maps $(1_X, 1_X, 1_A,e)$ where $e$ is just the ``trivial'' weak equivalence structure on the identity, using the identity as both a left and a right inverse.

\begin{definition}  \label{equ:characterisations-of-univalence} Let $p \co A \to X$ be a small fibration with cofibrant fibers. We say that $p$ is \myemph{univalent} if the map $i \co X \to \Weq(p)$ is a weak equivalence. 
\end{definition}

\smallskip

For a small fibration with cofibrant fibers $p \co A \to X$, being univalent is equivalent to
either $s \co \Weq(p) \to X$ or $t \co \Weq(p) \to X$ being a trivial fibration.  Also note that, when $X$
is cofibrant, we have a map $j \co \Path(X) \to \Weq(p)$ fitting in the diagram
\[
\xymatrix@C=1.5cm{
X \ar[r]^i \ar@{>->}[d]_r & \Weq(p) \ar@{->>}[d]^{(s,t)} \\
\Path(X) \ar[r]_{\partial_X}  \ar@{.>}[ur]_{j} &  X \times X }
\]
In this case, $p$ is univalent if and only if $j$ is a weak equivalence, mirroring the type-theoretic
definition of univalence. This will be the case for $\pi_c \co \UU_c \to \U_c$, for example.

\begin{theorem}  \label{thm:univalence-of-u-and-uc} \hfill 
\begin{enumerate}[(i)]
\item The fibration $\pi \co \UU \to \U$ is univalent.
\item The fibration $\pi_c \co \UU_c \to \U_c$ is univalent.
\end{enumerate}
\end{theorem}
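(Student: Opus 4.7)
The plan for part~(i) is to verify univalence of $\pi$ via the criterion recorded after~\cref{equ:characterisations-of-univalence}, by showing that the codomain projection $\Weq(\U) \to \U$, obtained as the composite $\Weq(\U) \to \U^{\to} \xrightarrow{\cod} \U$, is a trivial fibration. This map is already a fibration (a composite of the fibrations $\Weq(\U) \to \U^{\to}$ and $\U^{\to} \to \U \times \U$), so it suffices to check its right lifting property against the generating cofibrations $i^n \co \partial \Delta[n] \to \Delta[n]$. Fix such a lifting square, with top map $\alpha \co \partial \Delta[n] \to \Weq(\U)$ and bottom map $\beta \co \Delta[n] \to \U$.

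By~\cref{thm:universe-u}~(ii), $\beta$ classifies a small fibration $p \co E \to \Delta[n]$ with cofibrant domain, while the composite $\partial \Delta[n] \to \Weq(\U) \to \U^{\to}$ classifies an arrow $w^\partial \co F^\partial \to E|_{\partial \Delta[n]}$ in $\SSet_{/\partial \Delta[n]}$ between fibrations with cofibrant domains, endowed by virtue of the $\Iseq$ construction with a weak equivalence structure---and hence a weak equivalence. The cofibrancy hypotheses required to invoke the weak equivalence extension property~(\cref{Prop:Homotopy_ext_prop}) are all met: $E$ and $F^\partial$ are cofibrant by~\cref{lem:Cofib_fiber}~(ii), and $i^n$ is a cofibration between cofibrant objects. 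Applying the WEP will produce a fibration $\bar F \to \Delta[n]$ with cofibrant domain whose pullback along $i^n$ is (strictly) $F^\partial$, together with a weak equivalence $\bar w \co \bar F \to E$ in $\SSet_{/\Delta[n]}$ extending $w^\partial$. The triple $(\bar F, E, \bar w)$ then determines a classifying map $\Delta[n] \to \U^{\to}$ extending the $\U^{\to}$-projection of $\alpha$, and since $\bar w$ is a weak equivalence, \cref{prop:Weq_classify_Weq} supplies the desired diagonal lift $\Delta[n] \to \Weq(\U)$ extending $\alpha$ and sitting above $\beta$.

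For part~(ii), the key observation is that $\tau \co \U_c \to \U$ serves as a classifying map for $\pi_c$, so $\Weq(\pi_c)$ is the pullback of $\Weq(\U)$ along $\tau \times \tau \co \U_c \times \U_c \to \U \times \U$. Because $\tau$ is a trivial fibration, so is $\tau \times \tau$, and hence the comparison $\Weq(\pi_c) \to \Weq(\U)$ is a trivial fibration; postcomposing with the trivial fibration $\Weq(\U) \to \U$ obtained in part~(i) produces a trivial fibration $\Weq(\pi_c) \to \U$ that factors as $\Weq(\pi_c) \to \U_c \xrightarrow{\tau} \U$. Since $\tau$ is a weak equivalence, $2$-out-of-$3$ will show that $\Weq(\pi_c) \to \U_c$ is a weak equivalence, and because $\U_c$ is fibrant by~\cref{thm:fibrancy-of-u-and-uc}~(ii), the projection $\U_c \times \U_c \to \U_c$ is a fibration, making $\Weq(\pi_c) \to \U_c$ itself a fibration and thus a trivial fibration. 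The main technical delicacy I anticipate is ensuring that the extension produced by~\cref{Prop:Homotopy_ext_prop} is compatible with the boundary data \emph{strictly}---not merely up to isomorphism---at the level of classifying maps into $\U$ and $\U^{\to}$, so that the square fed into \cref{prop:Weq_classify_Weq} really commutes on the nose; this should follow from the explicit pullback construction appearing in the proof of the WEP, together with the functorial definition of the classifying maps of~\cref{thm:universe-u}.
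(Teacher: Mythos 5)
Your proposal is correct and follows essentially the same route as the paper. For part~(i) the paper likewise applies~\cref{Prop:Homotopy_ext_prop} in tandem with~\cref{prop:Weq_classify_Weq} to produce the diagonal lift, and for part~(ii) it forms the very pullback diagram you describe and uses the same $2$-out-of-$3$ argument through $\tau \times \tau$ and the result of part~(i).

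A few small observations worth recording. Your restriction in part~(i) to the generating cofibrations $i^n \co \partial\Delta[n] \to \Delta[n]$ is actually slightly more careful than the paper's phrasing: the paper speaks of ``the right lifting property with respect to all cofibrations'' and then feeds an arbitrary cofibration $f \co Y \to X$ into the weak equivalence extension property, whereas~\cref{Prop:Homotopy_ext_prop} requires $f$ to be a cofibration \emph{between cofibrant objects} and requires the relevant total spaces to be cofibrant. Testing against boundary inclusions, which are cofibrations between finite (hence cofibrant) simplicial sets, automatically satisfies those side conditions, and as you note~\cref{lem:Cofib_fiber} then gives cofibrancy of the total spaces. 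The ``technical delicacy'' you flag about the WEP producing an extension whose restriction to $\partial\Delta[n]$ agrees with the given data on the nose is a real subtlety that the paper also elides; it is resolvable because the Hofmann--Streicher classifying map of a fibration is determined strictly by its literal fibers, and the construction in~\cref{Prop:Homotopy_ext_prop} can be arranged so that the pullback of $\bar B$ along $f$ is literally $B$ (since $f$ is a monomorphism and the isomorphism $B \cong \bar{B}[f]$ comes from the natural isomorphism $f^*\Pi_f \cong 1$, one may simply transport along it), but you are right to call attention to it.
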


\begin{proof} For part (i), we prove that $t \co  \Weq(\U) \to \U$ has the right lifting property with respect
to all cofibrations. So let $f \co Y \rightarrow X$ be a cofibration and consider the diagonal
filling problem
\[
\xymatrix{Y \ar@{>->}[d]_f \ar[r] & \Weq(\U) \ar@{->>}[d] \\
X \ar[r] \ar@{.>}[ur]  & \U \, .
}
\]
By construction of $\Weq(\U)$, and using \cref{prop:Weq_classify_Weq}, this corresponds exactly to a diagram as in the equivalence extension property as in \cref{Prop:Homotopy_ext_prop}. Indeed, the map $X \rightarrow \U$ gives us
$p \co A \to X$, the composite of~$Y \rightarrow  \mathsf{Weq}(\U)$ with the first projection
gives us $q \co B \to Y$, while the rest of the data and the commutativity of the square 
gives us a weak equivalence $u$ between $B$ and $A[f]$ over~$X$. The completion of this diagram given by \cref{Prop:Homotopy_ext_prop} is exactly what one needs to produce the required diagonal filler.

For part~(ii), we prove that $t \co \Weq(\pi_c) \to \U_c$ is a trivial fibration. For this, let us
observe that we have a diagram 
\[
\xymatrix@C=1.5cm{
\Weq(\U_c) \ar@{->>}[r]^{\sigma}  \ar@{->>}[d]_{(s,t)} & \Weq(\U) \ar@{->>}[d]^{(s,t)} \\
\U_c \times \U_c \ar@{->>}[r]^{\tau \times \tau} \ar@{->>}[d]_{\pi_2} & \U \times \U \ar@{->>}[d]^{\pi_2} \\
\U_c \ar@{->>}[r]_\tau & \U \, . }
\]
The composite  on the left-hand side is the map $t$ that we wish to show to be a trivial fibration.
First, using part (i), observe that it is a fibration since it is the composite of two fibrations. Secondly,
observe that the top square is a pullback and so $\sigma$ is a trivial fibration since $\tau \times \tau$ 
is so. Thus, applying 3-for-2 to the outer square, we obtain that $t$ is a weak equivalence and hence
a trivial fibration, as required.
\end{proof}

\section{Conclusions and future work}
\label{sec:conclusion}

We can now sumarize our results using the comprehension category of cofibrant simplicial sets introduced in \cref{thm:compcat}, 
\[ 
\xymatrix{
\mathbf{Fib}_{ \mathsf{cof}} \ar[dr]_{p} \ar[rr]^{\chi} & & \SSet^{\to}_{\mathsf{cof}} \ar[dl]^{\mathrm{cod}} \\ 
 & \SSet_{\mathsf{cof}} \, . &  }
 \]
For this, we use the terminology of \cite{LumsdaineP:locuoc} regarding the stability conditions and
that of  \cite[Appendix~A]{ShulmanM:allths}  regarding the type-theoretic universe. When we say
that the universe is closed under some constructions, we simply mean that the fibrations classified by the universe are closed under these constructions.

\begin{theorem} \label{th:main_ContextualCat} The comprehension category of cofibrant simplicial sets has:
\begin{itemize}
\item a pseudo-stable empty type $0$, unit type $1$ and natural numbers type $\mathbb{N}$,
\item pseudo-stable $+$-types,
\item partially pseudo-stable $\Id$-types,
\item pseudo-stable $\Sigma$-types,
\item weakly stable $\Pi$-types,
\item a pseudo Tarski universe $\pi_c: \UU_c \rightarrow \U_c$, containing $0$, $1$, $\mathbb{N}$ and closed under the $\Sigma$-types, $\Id$-types, $\Pi$-types and $+$-types constructions.
\end{itemize}
Furthermore, the fibration $\pi_c: \UU_c \rightarrow \U_c $ is univalent.
\end{theorem}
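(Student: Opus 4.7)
The plan is to verify each clause of the theorem by assembling the technical results established in the preceding sections, taking care to check that the appropriate stability conditions hold in each case. The base category $\SSet_{\mathsf{cof}}$ has a terminal object (the point), and by \cref{lem:cofibrant_fiber_product} it is closed under the pullbacks used to form context extension; the comprehension category structure has already been established in \cref{thm:compcat}.

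First I would handle the basic type constructors. The empty, unit and natural numbers types are interpreted by the initial object, the terminal object and the constant simplicial set $\mathbb{N}$, respectively. Each is cofibrant, and the product projection onto any cofibrant context is a fibration by the pushout-product property applied to the map from the empty simplicial set; stability under arbitrary pullback of cofibrant objects is immediate since these constructions commute with pullback. For $+$-types, one takes the fiberwise coproduct of two fibrations with cofibrant domain, which is again a fibration with cofibrant domain (using decidability of degeneracy, which is preserved by coproducts), and the construction is strictly pullback-stable along cofibrant maps. For $\Sigma$-types, I would invoke \cref{thm:sigma-types}, observing that composition of fibrations with cofibrant domain gives a fibration with cofibrant domain; pseudo-stability follows from the strict functoriality of pullback. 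For $\Id$-types I would invoke \cref{thm:MainPathObject}, which gives the required factorisation of the diagonal through the mapping path space $\Path(p)$; the stability is only \emph{partial} in the sense of \cite{LumsdaineP:locuoc} because, while $\Path(-)$ commutes with pullback, the chosen fibration structure on $\partial\colon\Path(p)\to A\times_X A$ need not strictly agree with the one obtained by transporting structure.

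For $\Pi$-types, the construction follows \cref{rem:pi-types}: given a fibration with cofibrant domain $p\colon A\to X$ and another $q\colon B\to A$, one forms $\Pi_p(q)$, which is a fibration by \cref{cor:Pi_types_are_fibrant}, and then takes a cofibrant replacement $\mathbb{L}(\Pi_A(B))\to\Pi_A(B)$; the $\beta$-rule holds strictly while the $\eta$-rule holds only up to homotopy, via the diagonal filling spelled out in \cref{rem:pi-types}. This is what makes the $\Pi$-types only \emph{weakly stable}: cofibrant replacement is not pullback-stable, and comparison maps between the cofibrant replacement of a pullback and the pullback of the cofibrant replacement are constructed as lifts against trivial fibrations. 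I would carry out this comparison explicitly by noting that both candidates are cofibrant and map by trivial fibrations to the same strict pullback, hence are connected by a weak equivalence over it.

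Finally, for the pseudo Tarski universe I would take $\pi_c\colon\UU_c\to\U_c$ as constructed in \cref{thm:universe-uc}. Fibrancy of $\U_c$ is \cref{thm:fibrancy-of-u-and-uc}(ii) and univalence is \cref{thm:univalence-of-u-and-uc}(ii). The remaining task is to verify that $\pi_c$ is closed under the type constructors, which amounts to checking that the operations giving rise to $0$, $1$, $\mathbb{N}$, $+$, $\Sigma$, $\Id$ and $\Pi$ on fibrations classified by $\U_c$ produce again fibrations classified by $\U_c$, i.e.\ that they preserve smallness and cofibrancy of domain. Smallness is straightforward for all constructions except $\Pi$, where one needs the propositional resizing assumption on the inaccessible $\mathsf{u}$ stated in the introduction to see that $\Pi_A(B)$ remains small; its cofibrant replacement then stays small by the explicit construction in the Appendix. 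The main obstacle I anticipate is formulating the weak stability conditions for $\Pi$-types precisely enough to fit into the framework of \cite{LumsdaineP:locuoc}, and in particular producing the comparison cells witnessing weak stability as iterated lifts of trivial fibrations against cofibrant contexts; everything else is essentially bookkeeping built on the substantive theorems already proved.
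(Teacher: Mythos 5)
Your proposal follows the paper's proof structure closely and correctly assembles the same ingredients: \cref{thm:compcat} for the comprehension category, discrete simplicial sets for $0$, $1$, $\mathbb{N}$, coproducts for $+$, composition for $\Sigma$ via \cref{thm:sigma-types}, mapping path spaces via \cref{thm:MainPathObject} for $\Id$, cofibrant replacement of $\Pi_p(q)$ via \cref{cor:Pi_types_are_fibrant} and \cref{rem:pi-types} for $\Pi$, and then \cref{thm:universe-uc}, \cref{thm:fibrancy-of-u-and-uc} and \cref{thm:univalence-of-u-and-uc} for the universe. Your treatment of the weak stability of $\Pi$-types (both candidates are cofibrant replacements of the same object, hence are related by a lift against a trivial fibration) and of the smallness of $\Pi$-types in the universe via propositional resizing and the explicit cofibrant replacement from the Appendix also matches the paper.

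There is, however, a real error in your account of why $\Id$-types are only \emph{partially} pseudo-stable. You attribute this to the chosen fibration structure on $\partial \co \Path(p) \to A \times_X A$ not being transported strictly by pullback. That is not the relevant obstruction: in this comprehension category, morphisms in $\mathbf{Fib}_{\mathsf{cof}}$ are mere commutative squares with no compatibility condition on the chosen lifts (as stated in the remarks on constructivity), so the fibration-structure data does not enter the pseudo-stability condition in the sense of \cite{LumsdaineP:locuoc} at all. The construction of $\Path(p)$ itself, being built from pullbacks and exponentials (hence dependent products), is pseudo-stable; what fails is the $\mathsf{J}$-eliminator, which is produced as a diagonal filler for the trivial cofibration $r_p \co A \to \Path(p)$ against a fibration, and such fillers are not pullback-stable. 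That lifting-based elimination data is precisely why the $\Id$-types are only partially pseudo-stable, and this is the explanation the paper gives. You should replace your stated reason with that one.
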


\begin{proof}
We have checked that this is indeed a comprehension category in \cref{thm:compcat}. 
For $0$, $1$ and $\mathbb{N}$, these are simply given by the discrete simplicial sets, which are cofibrant (as every cell of dimension $>0$ is degenerated) and fibrant. For $+$ types, we use coproducts observing that
 $A + B$ is fibrant and cofibrant if $A$ and $B$ are so. 
 
Identity types are constructed in~\cref{sec:pats}. In particular, see \cref{thm:MainPathObject}. The construction of $\Id$-types themselves involves only categorical dependents products and pullbacks, which are all pseudo-stable, but the construction of the $\mathsf{J}$-eliminator involves a lifting properties which is not pseudo-stable 
in general. Thus, we have partially pseudo-stable identity types, as in~\cite[Definition 2.3.4]{LumsdaineP:locuoc}. The $\Sigma$-types have been constructed in \cref{thm:sigma-types} and are clearly stable under pullback up to isomorphism.  
The $\Pi$-types have been constructed in \cref{sec:Pi-types} as cofibrant replacement $\mathbb{L} \Pi_A B$ of the categorical dependent products $\Pi_A B$. The categorical dependant products are pseudo-stable, so given such a $\Pi$-type $\mathbb{L} \Pi_A B$, in context $\Gamma$ its pullback $f^*( \mathbb{L} \Pi_A B)$ along a context morphism $f \co \Gamma' \rightarrow \Gamma$ might be different from $\mathbb{L} \Pi_{f^* A} f^* B$ but is also a cofibrant replacement of $\Pi_{f^* A} f^* B$ hence also has the property of being a $\Pi$-type.

Finally, in \cref{sec:unifbb} we constructed the universe $\UU_c \rightarrow \U_c$ as a classifier for all small fibrations between cofibrant objects. As small fibrations are stable under all the constructor above, this gives the stability of the universe. It has been shown in \cref{thm:univalence-of-u-and-uc} in that this fibration is indeed univalent. Regarding the closure of the universe, the only delicate point concerns $\Pi$-types because of the
use of the cofibrant replacement.  The explicit definition of the cofibrant replacement functor 
in  \cref{sec:appendix} shows that  the trivial fibration $\varepsilon_X \co \mathbb{L} X \rightarrow X$ is a small fibration for every $X$, independently of whether $X$ is small or not (see part~(i) of \cref{cor:cofibrant_smallness} for details). Hence the construction of $\Pi$-types  preserves small fibrations, even between non-small objects.
\end{proof}

\begin{remark}
The $\Pi$-types constructed in the present paper have stronger properties than being weakly stable $\Pi$-types as stated above. First, as explained in \cref{rem:pi-types}, they satisfy a propositional $\eta$-rule, while \cite{LumsdaineP:locuoc} require no $\eta$-rule. Secondly, given our $\Pi$-type $\mathbb{L}(\Pi_A B)$ over $X$ in the sense of \cref{th:main_ContextualCat}, and $f \co X'  \rightarrow X$, then  both $f^*( \mathbb{L} ( \Pi_A B ))$ and $\Pi$-type $\mathbb{L} \big( \Pi_{f^* A} f^* B \big)$ are both cofibrant replacements of $f^* (\Pi_A B) \cong \Pi_{f^*A} f^* B $. Hence, they are homotopy equivalent. 
\end{remark}

\begin{remark} \label{rem:strength}
Our proof that the universe is closed under $\Pi$-types relies on the assumption that the inacessible set $\mathsf{u}$ satisfies a form of propositional resizing, as observed in~\cref{cor:cofibrant_smallness}. If one wishes to avoid this additional assumption, there are at least two alternatives.
The first alternative arises if one does not make the assumption that subsets of small sets are small and keeps all the definitions the same. Then, we still have a comprehension category with all the structure mentioned in~\cref{th:main_ContextualCat}, but do not have anymore that the universe is closed under the construction of arbitrary $\Pi$-types. Instead, one has (in type-theoretic notation) that if  $\Gamma, a \co A \vdash B(a) \co \U_c$ then $\Gamma \vdash \Pi_A B \co \U_c$ if $\Gamma$ is a context  involving only small types. The second alternative arises if one works instead in CZF without assuming any inacessible sets, and changes to an interpretation where contexts are ``simplicial classes'' (where classes are formulas), general types are given by Kan fibrations between simplicial classes and a small fibrations is one whose fibers are sets. In this case, part~(ii) of \cref{cor:cofibrant_smallness}  provides the smallness property that ensures that the cofibrant replacement preserves small type even in non-small context.  We then   have that the universe is closed under $\Pi$-type, but we cannot form general  $\Pi$-types, but only those of the form $\Pi_A B$ where $A$ is a small type.
\end{remark}

\begin{remark} A lack of strict stability for $\Pi$-types is also present in the semisimplicial
model defined in~\cite{CoquandT:gentgi}. This is of interest since cofibrant simplicial
sets are closely connected to semisimplicial sets~\cite{henry2019qms}. 
\end{remark} 

\begin{remark}
The main result of \cite{LumsdaineP:locuoc} assert that if a contextual category $\mathcal{C}$, satisfying 
the additional condition  (LF) of \cite[Definition 3.1.3]{LumsdaineP:locuoc}) has weakly stable type constructors, then left adjoint splitting $\mathcal{C}_!$ has the same constructors, but now strictly stable,
thus solving the coherence problem. Unfortunately, condition (LF) is not satisfied in our constructive
setting, essentially because cofibrant objects are generally not closed under dependent products.  
\end{remark}

We leave as an open problem the question of whether a comprehension category as in~\cref{th:main_ContextualCat} can be split so as obtain a model of ML(UA) and so, in particular,
obtain a constructive version of the simplicial model of ML(UA). It should be noted that this question
is now completely independent from simplicial homotopy theory, as our results in this paper have 
obtained all the necessary results to produce the structure in~\cref{th:main_ContextualCat}. 
We also leave as open problem the question of whether our comprehension category supports
other inductive types~\cite{vandenberg_moerdijk_2015} and higher-inductive types in the sense of~\cite{hottbook}. 
We expect these to work without the need to perform cofibrant replacements.

Finally, we mention as a potential area of further research the idea of designing a dependent type
theory with explicit substitutions~\cite{AbadiM:exps} with rules matching the structure of the comprehension category of~\cref{th:main_ContextualCat}, extending the idea of 
substitution up to isomorphism of~\cite{CurienP:subi} to that of substitution up to weak equivalence. 
A conservativity result of such a system over ML(UA)
would then be essentially equivalent to a solution to the coherence problem, {cf.}~\cite{CurienPL:revcit}.

\appendix

\section{The cofibrant replacement functor}
\label{sec:appendix}

The goal of this appendix is to give an explicit description of the cofibrant replacement functor on simplicial sets (\cref{prop:Cof_replacement}) and to use it to discuss the size of this cofibrant replacement.

\begin{definition}\hfill
\begin{enumerate}[(i)]  
\item If $x \in X_n$ is an $n$-cell in a simplicial set $X$, the \emph{degeneracy type} of $x$ is the set of face maps $\delta:[i] \rightarrow [n]$ such that $\delta^* x$ is a degenerate cell.
\item By a \emph{degeneracy type} or \emph{degeneracy $n$-type} we mean a subset of faces of $\Delta[n]$ that can appear as the degeneracy type of an $n$-cell of a simplicial sets.
\end{enumerate}
\end{definition}

\medskip

Note that given any $n$-cell $x \in X_n$, the factorization of $x \co \Delta[n] \rightarrow X$ as a degeneracy quotient followed by a degeneracy detecting morphism constructs a degeneracy quotient $y \co \Delta[n] \rightarrow K$ such that $y$ has the same degeneracy type as $x$. This induces a correspondence between degeneracy $n$-types and (isomorphism classes of) degeneracy quotients $\Delta[n] \rightarrow K$. 

\begin{lemma} A degeneracy $n$-type is the degeneracy type of a cell in a cofibrant object  if and only if it is decidable as a subset of faces of $\Delta[n]$.
\end{lemma}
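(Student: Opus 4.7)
The plan is to prove the two directions separately. For the forward direction, I will use the fact that cofibrancy of $X$ amounts to decidability of the degeneracy predicate on its cells. Given $x \in X_n$ in cofibrant $X$, since the set of face maps $\delta \co [i] \to [n]$ with $0 \leq i \leq n$ is a finite decidable set, and for each such $\delta$ we can decide whether $\delta^* x$ is degenerate in $X$, the degeneracy type of $x$ will be a decidable subset of faces.

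For the backward direction, I will invoke the correspondence introduced immediately before the lemma between degeneracy $n$-types and isomorphism classes of degeneracy quotients $\Delta[n] \to K$. Given a decidable $T$, the plan is to construct directly the associated degeneracy quotient $e_T \co \Delta[n] \to K_T$ such that $e_T(\mathrm{id}_{[n]})$ has degeneracy type $T$. Since $T$ is a decidable subset of the finite set of face maps of $\Delta[n]$, it is itself finite; thus $K_T$ can be produced as a pushout of $\Delta[n]$ along a finite coproduct of degeneracy maps, one for each face $\delta \in T$, forcing $\delta^* \mathrm{id}_{[n]}$ to lie in the image of a non-trivial degeneracy. The resulting $K_T$ will be a finite simplicial set.

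To conclude, I will rely on the general observation that any finite simplicial set $K$ is cofibrant: each $K_m$ is finite and thus carries decidable equality, so the predicate ``$y \in K_m$ is degenerate'' unfolds to a finite disjunction of decidable equalities $\bigvee_{z,i}(y = \eta_i^* z)$, hence is decidable. The finite simplicial set $K_T$ is therefore cofibrant, and by construction it carries the $n$-cell $e_T(\mathrm{id}_{[n]})$ of degeneracy type exactly $T$, as desired.

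The hard part, and the place where the decidability hypothesis is genuinely used, will be ensuring the effective construction of $K_T$ from $T$ alone when $T$ is only given as a decidable subset of faces, and verifying that after the finite sequence of pushouts the top $n$-cell has degeneracy type precisely $T$ (and not some strictly larger set generated by interactions between imposed degeneracies). Once $K_T$ is available as a finite simplicial set with the correct degeneracy type, the cofibrancy step is automatic from the previous paragraph.
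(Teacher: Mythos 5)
The forward direction is essentially identical to the paper's: both reduce decidability of the degeneracy type to decidability of degeneracy in the cofibrant $X$, quantifying over the finite decidable set of faces.

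Your backward direction, however, takes a different and ultimately incomplete route. You propose to \emph{build} a simplicial set $K_T$ from scratch as a finite pushout of degeneracy maps, then argue cofibrancy via finiteness. The problem is precisely what you flag as ``the hard part'' at the end: the pushout is not actually specified. For each $\delta \in T$ you must choose which degeneracy $\sigma$ the cell $\delta^*\mathrm{id}_{[n]}$ is to factor through, and $T$ by itself does not record this datum. Different choices give non-isomorphic quotients, and a careless choice can force additional faces to become degenerate, so that the image of $\mathrm{id}_{[n]}$ has degeneracy type strictly larger than $T$. You have not explained how to recover the correct $\sigma$'s from $T$ alone, nor verified that the degeneracy type of the resulting cell is exactly $T$. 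Without that, the proof does not go through. (There is also a secondary worry, less serious: one must check that the pushout has finite \emph{decidable} sets of $m$-cells before ``finite $\Rightarrow$ cofibrant'' applies; this requires showing the generated equivalence relation is decidable, which is true here but needs a word.)

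The paper's proof avoids this entirely by never attempting to reconstruct $K$ from $T$. Since $T$ is by hypothesis a degeneracy $n$-type, the correspondence stated just before the lemma already provides a degeneracy quotient $e \co \Delta[n] \to K$ whose top cell has degeneracy type $T$. One then shows this given $K$ is cofibrant directly: every cell $y \in K_m$ has a preimage $z \co [m] \to [n]$ in $\Delta[n]$, and $y$ is degenerate in $K$ if and only if $z$ is degenerate in $\Delta[n]$ (always decidable) or $z$ is a face lying in $T$ (decidable by hypothesis). This criterion is independent of the chosen preimage, so degeneracy in $K$ is decidable and $K$ is cofibrant. This is both shorter and sidesteps all the combinatorial bookkeeping your construction would require. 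To salvage your version, you would need to supply the missing construction of $K_T$ — at which point you would effectively be reproving the correspondence the paper already provides.
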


\begin{proof}
Given an $n$-cell $x$ in a cofibrant simplicial set $X$, given any face $\delta:[m] \rightarrow [n]$ it is decidable if $\delta^* x$ is degenerated or not, hence the degeneracy type of $x$ is decidable. Conversely, given any degeneracy type $p$, it is the degeneracy type of a degeneracy quotient $\Delta[n] \rightarrow K$. Any cell of $K$ is the image of a cell of $\Delta[n]$, and it is degenerated in $K$ if and only if it is the image of a degenerated cell, or the image of a cell in $p$. Hence if $p$ is decidable $K$ is cofibrant.
\end{proof}

We will now construct a ``simplicial set of decidable degeneracy type $D$''. Note that the construction would work exactly the same without the assumption ``decidable'', with the exception that as CZF does not has power set, this might not be a simplicial set, but rather a simplicial class.

 We start with a simplicial set $D_0$ whose $n$-cells are all the decidable subset of the faces of $\Delta[n]$, i.e. decidable subsets of the set of finite subsets of $[n]$. As decidable subsets are justs functions to $\{0,1\}$ this is indeed a set.  If $f : [m] \rightarrow [n]$ and $P \in D_0([n])$ one defines $f^* P$ as the set of faces  $[i] \rightarrow [m]$ such that the composite 
 \[
 [i] \rightarrow [m] \overset{f}{\rightarrow} [n]
 \] is either in $P$ or non-injective, which is indeed a decidable set of faces.

\begin{lemma} \label{lem:prev}
$D_0$ is a cofibrant simplicial set, and for any cofibrant simplicial set $X$, the map $X \rightarrow D_0$ sending each $n$ cell to its degeneracy type is a simplicial and degeneracy detecting map.
\end{lemma}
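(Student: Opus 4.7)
The plan is to verify in sequence that $D_0$ is a well-defined simplicial set, that $D_0$ is cofibrant, and that the degeneracy-type map $X \to D_0$ is well-defined, simplicial and degeneracy-detecting.

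For simpliciality of $D_0$, I would check the simplicial identities directly. It may be cleanest to reformulate: a decidable subset $P$ of the faces of $\Delta[n]$ corresponds bijectively to a decidable subset $\tilde P$ of \emph{all} simplicial operators $[i] \to [n]$ that contains every non-injective operator, and under this correspondence $f^* P$ is the ordinary preimage of $\tilde P$ under post-composition with $f$. Functoriality of $f^*$ then follows from the set-theoretic functoriality of preimages, and decidability is preserved because the relevant sets of operators are finite.

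For cofibrancy of $D_0$, I would give a combinatorial criterion for when $P \in D_0([n])$ equals $\sigma^* Q$ for a specific non-identity degeneracy $\sigma \co [n] \twoheadrightarrow [k]$, namely: (a) every injective $\delta \co [i] \to [n]$ with $\sigma \circ \delta$ non-injective lies in $P$, and (b) whenever $\sigma \circ \delta_1 = \sigma \circ \delta_2$ is an injection, for injective $\delta_1, \delta_2 \co [i] \to [n]$, one has $\delta_1 \in P \iff \delta_2 \in P$. When these hold, $Q$ is uniquely determined by the formula $\eta \in Q \iff \delta \in P$ for any injective lift $\delta$ of $\eta$ along $\sigma$ (which exists since $\sigma$ is a surjection in $\Delta$), and $Q$ is itself decidable. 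Both conditions involve only finitely many operators and decidable membership in $P$, so $\sigma$-degeneracy is decidable; since there are finitely many candidate $\sigma$, degeneracy of $P$ is decidable.

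For the degeneracy-type map $X \to D_0$, cofibrancy of $X$ makes each degeneracy type decidable, so the map lands in $D_0$. Simpliciality is a direct verification: for $f \co [m] \to [n]$, a face $\delta \co [i] \to [m]$ lies in the degeneracy type of $f^* x$ iff $(f \circ \delta)^* x$ is degenerate in $X$, which holds automatically when $f \circ \delta$ is non-injective (the cell factors through a lower-dimensional simplex) and otherwise amounts to $f \circ \delta$ lying in the degeneracy type of $x$, which is exactly the formula for $f^*$ on $D_0$. For degeneracy-detection, one direction is immediate by naturality applied to $x = \sigma^* y$. For the converse, if the degeneracy type of $x$ equals $\sigma^* Q$ for a non-identity degeneracy $\sigma$, then applying criterion (a) to $\delta = \id_{[n]}$ (for which $\sigma \circ \id_{[n]} = \sigma$ is non-injective) forces $\id_{[n]}$ into the degeneracy type of $x$, i.e.\ $x$ itself is degenerate in $X$.

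The main obstacle is isolating the right combinatorial characterization of degeneracy in $D_0$; once conditions (a) and (b) are in hand, decidability of degeneracy in $D_0$ and the degeneracy-detecting property for the universal map both fall out with minimal further work, the latter via the observation that the identity face is always a valid test case.
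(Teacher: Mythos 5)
Your proof is correct, and its overall structure matches the paper's: verify that $D_0$ is a simplicial set, show it is cofibrant, check that the degeneracy-type map is simplicial, and establish degeneracy-detection using the observation that a degenerate cell of $D_0$ always contains the identity face. The one place you genuinely diverge is the argument for cofibrancy of $D_0$. The paper invokes the general principle that a simplicial set with decidable equality at each level is automatically cofibrant: one decides whether $x \in D_0([n])$ is degenerate by checking, over the finitely many pairs of a nontrivial degeneracy $s$ and a face $d$ of the right shape, whether $x = s^* d^* x$; this applies because $D_0([n])$ is a set of functions from a finite set into $\{0,1\}$ and so has decidable equality. You instead isolate an explicit combinatorial criterion, your conditions (a) and (b), for when $P \in D_0([n])$ is $\sigma$-degenerate, and verify that criterion is decidable. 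Both arguments are valid. The paper's decidable-equality trick is shorter and is a reusable general-purpose observation; your criterion is more explicit and dovetails neatly with the degeneracy-detection step, since condition (a) applied to $\delta = \id_{[n]}$ immediately yields the identity-face observation. The paper arrives at that same observation marginally more directly, by reading it off the defining formula for the simplicial action on $D_0$ applied to a non-injective operator.
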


\begin{proof}
It is immediate to check that $D_0$ is a simplicial set. Moreover as if has decidable equality one can decide whether a cell $x$ is degenerated by testing if it is equal to $s^* d^* x$ for some non trivial face $d$ and degeneracy $s$, hence $D_0$ is cofibrant. Let $x \in X_n$ and $f:[m] \rightarrow [n]$, in order to check that the maps $X \rightarrow D_0$ sending each cell to its degeneracy type is simplicial, one need to check that the degeneracy type of $f^* x$ is indeed described from the degeneracy type of $x$ using the formula for the functoriality of $D_0$. For face maps $[i] \rightarrow [m]$ the cell $i^* f^* x$ is degenerated as soon as $f \circ i$ is non injective, and if $f\circ i$ is injective, then it depends on whether $f \circ i$ is in the degeneracy type of $x$ or not.

If $f:[n] \rightarrow [m]$ is non-injective then for any $s \in (D_0)_m$ the identity map $[n] \rightarrow [n]$ is in $f^* s$. So a degenerate cell of $D_0$ always contains the maximal face. In particular the map $X \rightarrow D_0$ constructed above send non-degenerate cell of $X$ to non-degenerate cells of $D_0$.
\end{proof}

\cref{lem:prev} constructs a map $P \co D_0 \rightarrow D_0$ sending any cell to its degeneracy type. As $P$ preserves and detects degeneracy, it preserves the degeneracy type and hence $P \circ P =P$.

\begin{definition}
The simplicial set $D$ is the set of fix point of the idempotent $P$ acting on $D_0$.
\end{definition}

Note that for any simplicial set $X$, the map $f:X \rightarrow D_0$ sending each cell to its degeneracy type preserves the degeneracy type, hence $P f = f$. In particular a cell of $D_0$ is in $D$ if and only if it is a degeneracy type. This leads to the following fact. 

\begin{lemma}
$n$-cells of $D$ are in bijection with decidable degeneracy $n$-type. Each cofibrant simplicial set $X$ has a unique map detecting degeneracy to $D$, which is the map sending a cell to its degeneracy type. \qed
\end{lemma}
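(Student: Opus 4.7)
The plan is to verify each claim directly from the definitions, exploiting only the fact from \cref{lem:prev} that for a cofibrant simplicial set $X$ the map sending each cell to its degeneracy type is a simplicial, degeneracy-detecting morphism $X \to D_0$, together with the idempotence and simpliciality of $P$.

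For the bijection in part~(1), since $D_n$ is by definition the $P$-fixed points of $(D_0)_n$, it suffices to show that a decidable subset $S$ of faces of $\Delta[n]$ satisfies $P(S) = S$ if and only if $S$ is a decidable degeneracy $n$-type. The forward direction is immediate: the equation $P(S) = S$ exhibits $S$ as the degeneracy type of the $n$-cell $S$ of $D_0$. For the converse, if $S$ is a decidable degeneracy $n$-type, the preceding lemma furnishes a cofibrant $X$ and $x \in X_n$ whose degeneracy type is $S$; applying the degeneracy-detecting map $X \to D_0$ of \cref{lem:prev} together with simpliciality, a face $\delta$ is such that $\delta^* x$ is degenerate in $X$ if and only if $\delta^* S$ is degenerate in $D_0$, which says precisely that $S$ coincides with its own degeneracy type in $D_0$, i.e.\ $P(S) = S$.

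For existence in part~(2), the map $\phi \co X \to D_0$ of \cref{lem:prev} automatically lands in $D$, because by part~(1) every decidable degeneracy type is $P$-fixed. A key auxiliary observation is that degeneracy in $D$ coincides with degeneracy in $D_0$ on cells of $D$: if $z \in D$ admits a witness $z = \sigma^* w$ with $w \in (D_0)_m$, then since $P$ is simplicial and idempotent, $z = P(z) = \sigma^* P(w)$ with $P(w) \in D$, so $z$ is already degenerate in $D$. Consequently, $\phi$ remains degeneracy-detecting after factoring through $D \hookrightarrow D_0$.

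For uniqueness, let $f \co X \to D$ be any simplicial degeneracy-detecting map and fix $x \in X_n$; the goal is to show that $f(x)$ equals the degeneracy type of $x$ as a subset of faces of $\Delta[n]$, which will force $f = \phi$. The crucial point is that for any $y \in D_n$ the subset-of-faces defining $y$ is literally the degeneracy type of $y$ in $D$: the equation $P(y) = y$ gives this statement in $D_0$, and by the auxiliary observation above degeneracy in $D_0$ coincides with degeneracy in $D$ for cells of $D$. Applying this to $y = f(x)$ and combining with simpliciality $\delta^* f(x) = f(\delta^* x)$ and the degeneracy-detecting property of $f$, one obtains that $\delta \in f(x)$ iff $\delta^* x$ is degenerate in $X$, which is exactly the degeneracy type of $x$. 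The only delicate step is the equivalence of degeneracy in $D$ and in $D_0$ on cells of $D$, which as noted reduces to the idempotence and simpliciality of $P$.
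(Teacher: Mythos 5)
Your proof is correct and follows exactly the route the paper has in mind: the paper deliberately omits a proof (the lemma carries a bare \qed) because it regards the statement as an immediate consequence of the preceding remarks, in particular the observation that a cell of $D_0$ lies in $D$ if and only if it is a degeneracy type. Your write-up supplies the details the paper leaves implicit, including the uniqueness argument and the small but genuine point that degeneracy in $D$ agrees with degeneracy in $D_0$ on $D$-cells (your appeal to simpliciality and idempotence of $P$ to get $z = \sigma^* P(w)$ with $P(w)\in D$ is exactly right); note, though, that this auxiliary observation is really only needed for the uniqueness clause, not for existence, since a simplicial factorisation of a degeneracy-detecting map through a sub-simplicial set is automatically degeneracy-detecting.
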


\begin{lemma} \label{lem:D_contractible}
Given a map $\partial \Delta[n] \rightarrow D$ there is a unique way to extend it into map $\Delta[n] \rightarrow D$ such that the non-degenerate $n$-cell of $\Delta[n]$ is sent to a non-degenerate cell.
\end{lemma}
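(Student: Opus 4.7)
The plan is to construct the extension explicitly by exhibiting its top cell $T$ as a decidable subset of the faces of $\Delta[n]$. Given $f\co \partial\Delta[n] \to D$, I would define $T$ by declaring, for each proper face $\delta\co [i] \to [n]$ (so $i<n$), that $\delta \in T$ iff $\mathrm{id}_{[i]} \in f(\delta)$, and by insisting that $\mathrm{id}_{[n]} \notin T$. Decidability of $T$ follows from decidability of the finitely many sets $f(\delta)$.

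I would first check compatibility with $f$ on $\partial\Delta[n]$: for a proper face $\delta\co [i] \to [n]$, since $\delta$ is injective, the formula for $\delta^*$ in $D_0$ reduces to $\delta^* T = \{\varepsilon\co [j] \to [i] \mid \delta\circ\varepsilon \in T\}$. As $\delta\circ\varepsilon$ is a proper face of $\Delta[n]$ (because $j\le i < n$), the defining rule for $T$ gives $\delta\circ\varepsilon \in T$ iff $\mathrm{id}_{[j]} \in f(\delta\circ\varepsilon)$; since $f$ is simplicial, $f(\delta\circ\varepsilon) = \varepsilon^* f(\delta)$, and then $\mathrm{id}_{[j]} \in \varepsilon^* f(\delta)$ iff $\varepsilon \in f(\delta)$. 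Hence $\delta^* T = f(\delta)$.

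Next, I would verify that $T$ is a fixed point of $P$, hence a cell of $D$. This rests on the key auxiliary fact that a cell $S$ of $D$ is degenerate in $D_0$ iff $\mathrm{id} \in S$. One direction is already recorded in the proof of the preceding lemma (a degenerate cell of $D_0$ always contains the maximal face). For the converse, a cell $S$ of $D$ is realised as the degeneracy type of some cell $x$ in a cofibrant simplicial set; if $\mathrm{id} \in S$ then $x$ is degenerate, and its Eilenberg--Zilber decomposition $x = \sigma^* y$ together with the fact that the assignment of degeneracy types is simplicial exhibits $S = \sigma^*(\text{deg.\ type of }y)$, so $S$ is degenerate in $D_0$. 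Granted this, $P(T)$ contains a proper face $\delta$ iff $\delta^* T = f(\delta)$ contains $\mathrm{id}_{[i]}$, iff $\delta \in T$; and $P(T)$ contains $\mathrm{id}_{[n]}$ iff $T$ itself is degenerate in $D_0$, iff $\mathrm{id}_{[n]} \in T$, which is false by construction. So $P(T) = T$, and $T$ is a non-degenerate cell of $D_n$.

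Uniqueness is then forced by the same characterisation: any non-degenerate extension $T'$ satisfies $\mathrm{id}_{[n]} \notin T'$ and $\delta^* T' = f(\delta)$ for each proper $\delta$, whence $\delta \in T'$ iff $\mathrm{id}_{[i]} \in f(\delta)$, pinning $T'$ down to $T$. The main obstacle is the auxiliary lemma characterising degeneracy in $D_0$ of cells of $D$ by containment of the identity face; it plays the double role of making the non-degeneracy condition explicit on the top cell and of controlling $P(T)$ from the boundary data, and although half of it appears in the previous lemma, the other half (using Eilenberg--Zilber) deserves to be stated and proved separately.
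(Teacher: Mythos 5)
Your proposal is correct, and it identifies the same extension cell as the paper does, but your existence argument takes a genuinely different route. The paper characterises the top cell as the set of proper faces $\delta$ with $f(\delta)$ degenerate (with the maximal face excluded), notes uniqueness, and then establishes that this set really is a degeneracy type by a geometric realisation: in the pushout $D \amalg_{\partial\Delta[n]} \Delta[n]$ the newly attached $n$-cell has exactly this degeneracy type. You instead define $T$ by the equivalent condition $\mathrm{id}_{[i]} \in f(\delta)$ and verify directly, face by face, that $T$ is a fixed point of $P$, using the auxiliary characterisation that a cell $S$ of $D$ is degenerate if and only if $\mathrm{id} \in S$. This buys a purely combinatorial argument with no auxiliary pushout, at the cost of having to state and prove that characterisation; note that it is even more immediate than your Eilenberg--Zilber argument suggests, since for $S \in D$ one has $S = P(S)$, and $\mathrm{id} \in P(S)$ holds by the very definition of $P$ exactly when $S$ is degenerate in $D_0$. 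Two small remarks: your clean observation that $\mathrm{id}_{[i]} \in \delta^* T'$ iff $\delta \in T'$ (directly from the formula for the simplicial structure on $D_0$, since $\delta$ is injective) already pins down any extension and could streamline both the compatibility check and uniqueness; and in the step ``$P(T)$ contains $\mathrm{id}_{[n]}$ iff $T$ is degenerate in $D_0$, iff $\mathrm{id}_{[n]} \in T$'' the second ``iff'' is not justified for a cell not yet known to lie in $D$, but only the implication ``$T$ degenerate $\Rightarrow \mathrm{id} \in T$'' (recorded in the previous lemma) is actually used, so the argument stands.
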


\begin{proof}
If such an extension exists the $n$-cell $x$ corresponding to $\Delta[n] \rightarrow D$ has to be as follows: $x$ does not contains the face $[n] \rightarrow [n]$, and for all other face $\delta:[i] \rightarrow [n]$ it is in $x$ is and only the composite $[i] \rightarrow \partial \Delta[n] \rightarrow D$ is a degenerate cell. So the uniqueness is clear. We only need to show that this set is indeed a degeneracy type. But if one form $D \coprod_{\partial \Delta[n]} \Delta[n]$ then the new added $n$-cell has exactly this degeneracy type so this conclude the proof.
\end{proof}

Let $\varepsilon_X \co \mathbb{L} X \to X$ be the cofibrant replacement of a simplicial set $X$ constructed using  Garner's version of the small object argument~\cite{garner:small-object-argument}.

\begin{proposition} \label{prop:Cof_replacement}
The map $\mathbb{L} X \rightarrow D \times X$ sending an $n$-cell to its degeneracy type $s \in D$ and its image in $X$ induces a bijection between $\mathbb{L}X([n])$ and the set of pairs of a decidable degeneracy type $s$ and a cell of $x$ of degeneracy type larger than $s$.
\end{proposition}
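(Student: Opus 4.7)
The plan is to exhibit the map $\mathbb{L}X \to D \times X$ and then identify its image. The first component is obtained as follows: since $\mathbb{L}X$ is cofibrant by construction, the lemma immediately preceding the statement provides a unique degeneracy-detecting simplicial map $\mathbb{L}X \to D$ sending each cell to its degeneracy type. Pairing this with the trivial fibration $\varepsilon_X \co \mathbb{L}X \to X$ gives the map $\mathbb{L}X \to D \times X$. A quick check shows the image is contained in the stated subset: if $y \in (\mathbb{L}X)_n$ has degeneracy type $s$ and $\varepsilon_X(y) = x$, then for any face $\delta \in s$ the cell $\delta^* y$ is degenerate in $\mathbb{L}X$, so $\delta^* x = \varepsilon_X(\delta^* y)$ is degenerate in $X$; this is precisely the condition that the degeneracy type of $x$ contains $s$.

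To prove the bijection, I would analyze the sequential colimit produced by Garner's algebraic small object argument applied to $\emptyset \to X$ with the generating set of boundary inclusions $\{ i^n \co \partial\Delta[n] \to \Delta[n] \}$. Write $\mathbb{L}^{(k)} X$ for the $k$-th stage. The key combinatorial fact about the free version of the argument is that at each stage the non-degenerate $n$-cells freshly added are in bijection with the lifting problems
\[
\xymatrix{
\partial \Delta[n] \ar[r] \ar[d] & \mathbb{L}^{(k)} X \ar[d] \\
\Delta[n] \ar[r] & X
}
\]
that are not already solved by a composite of previously chosen lifts. The proof proceeds by induction on $n$: one verifies that each non-degenerate $n$-cell $y$ of $\mathbb{L}X$ is uniquely determined by the pair consisting of its boundary map $\partial \Delta[n] \to \mathbb{L}X$ and its image $x = \varepsilon_X(y) \in X_n$, with no compatibility conditions beyond the obvious square being commutative.

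Having matched non-degenerate cells with such boundary/image pairs, I would then translate this datum into the pair $(s, x)$ of the statement. Using Lemma \ref{lem:D_contractible}, the induced map $\partial \Delta[n] \to \mathbb{L}X \to D$ admits a unique extension to $\Delta[n] \to D$ whose value $s$ on the top cell does not contain the identity face; by induction, this $s$ realises all decidable degeneracy $n$-types with the identity face excluded, and the condition $s \subseteq \deg(x)$ records exactly the compatibility of the boundary lift with the image in $X$. Using the Eilenberg–Zilber lemma for the cofibrant simplicial set $\mathbb{L}X$, every $n$-cell decomposes uniquely as $\sigma^* y^\sharp$ for a degeneracy $\sigma \co [n] \twoheadrightarrow [m]$ and a non-degenerate cell $y^\sharp$; applying $\sigma^*$ on the $D$-side moves one between degeneracy types with or without the identity face, so the parametrisation of non-degenerate cells extends to a parametrisation of all cells by pairs $(s, x)$ with $s \subseteq \deg(x)$, with no further conditions.

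The main obstacle I anticipate is the careful bookkeeping in the inductive step: making sure that the cells added at stage $n+1$ of Garner's construction are exactly those whose boundary is already accounted for by the pair description of $(\mathbb{L}^{(n)} X)_{<n+1}$, and that the uniqueness clause in Garner's free small object argument matches uniqueness of $(s,x)$. This requires keeping track, at the level of each lifting problem, of which boundary component of $\Delta[n+1]$ lands in the degenerate part (contributing to $s$) versus in the non-degenerate part (contributing to the strict boundary of $x$). Once this matching is in place, injectivity and surjectivity of $\mathbb{L}X \to D \times X$ onto the stated subset follow simultaneously by induction on $n$.
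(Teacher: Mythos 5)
Your proposal takes the same route as the paper: reorganize Garner's small object argument as a stratified colimit $\mathbb{L}^{(0)}X \hookrightarrow \mathbb{L}^{(1)}X \hookrightarrow \cdots$ attaching $n$-cells at stage $n$, proceed by induction on dimension, and invoke Lemma~\ref{lem:D_contractible} to match the newly attached cells with decidable degeneracy types. But the sentence you flag as ``the key combinatorial fact'' is wrong, and the error is not harmless.

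You assert that the non-degenerate $n$-cells freshly attached at stage $n$ are in bijection with lifting problems ``that are not already solved by a composite of previously chosen lifts.'' In the stratified description the paper uses, stage $n$ attaches one new non-degenerate $n$-cell for \emph{every} commuting square $(\partial\Delta[n] \to \mathbb{L}^{(n-1)}X,\ \Delta[n] \to X)$, irrespective of whether that square already admits a filler. Take $X = \Delta[0]$: at stage $1$ the unique lifting problem $(\partial\Delta[1] \to \mathbb{L}^{(0)}X,\ \Delta[1] \to \Delta[0])$ already has a (degenerate) solution, yet the construction still attaches a non-degenerate $1$-cell; this is exactly what makes $D = \mathbb{L}1$ nontrivial in dimension~$1$, encoding the degeneracy type $s = \emptyset$. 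If ``solved'' squares were discarded, $\mathbb{L}1$ would be a point and the proposition (and the corollary $D = \mathbb{L}1$) would be false. Your claim also contradicts your own next sentence: if every non-degenerate $n$-cell is uniquely determined by its boundary map to $\mathbb{L}X$ and its image in $X$ with no further compatibility conditions, then the non-degenerate $n$-cells are in bijection with \emph{all} squares, not a subcollection.

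Once this is fixed, the rest of your outline is essentially the paper's argument. The ``bookkeeping'' you defer is precisely where the paper does its work: given a square at stage $n$, Lemma~\ref{lem:D_contractible} yields a unique extension of $\partial\Delta[n] \to D$ to a non-degenerate cell of $D$, and since the degeneracy type of the image $x$ of $\Delta[n]$ in $X$ automatically contains the degeneracy type so constructed, the pair $(s,x)$ is always admissible and no additional compatibility constraint arises. Your Eilenberg--Zilber step for passing from non-degenerate cells to all cells is fine, though the paper handles degeneracies more directly by identifying $\mathbb{L}^{(n)}X$ with the full $n$-skeleton of the target simplicial set at each stage.
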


\begin{proof} Because of the stratified nature of the simplicial generating cofibration, $\mathbb{L} X$ can be written as the colimit of a sequence:
 \[ 
 \mathbb{L}^{(0)} X \hookrightarrow \mathbb{L}^{(1)} X \hookrightarrow \dots \hookrightarrow \mathbb{L}^{(n)} X \hookrightarrow \dots 
 \]
where $\mathbb{L}^{(0)} X$ is just the set of $0$-cell of $X$ and the map $\mathbb{L}^{(n-1)} X \rightarrow \mathbb{L}^{(n)} X$ is constructed as the pushout of the coproduct of one copy of $\partial \Delta[n] \rightarrow \Delta[n]$ for each square of the form:
\[ 
\xymatrix{\partial \Delta[n]  \ar[r] \ar@{>->}[d] & \mathbb{L}^{(n-1)} X \ar[d] \\ \Delta[n]  \ar[r] & X} 
\]
Indeed, the pushout constructing $\mathbb{L}^{(n)} X$ from $\mathbb{L}^{(n-1)} X$ do not change the $k$-skeleton for $k <n$ and the set of maps $\partial \Delta[k] \rightarrow \mathbb{L}^{(n-1)} X$ only depends on the $k$-skeleton of $X$, so one can always do all the necessary pushout by $\partial \Delta[k] \hookrightarrow \Delta[k]$ by for $k<n$ before those by $\partial \Delta[n] \hookrightarrow \Delta[n]$.

We will prove by induction on $n$ that $\mathbb{L}^{(n)} X$ identifies with $n$-skeleton of the simplicial set mentioned in the proposition, i.e. the sub-simplicial set $Y^n$ of $D \times X$ of pairs $(s,x)$ such $s$ is decidable,  the degeneracy type of $x$ is at least $s$ and $(s,x)$ is of dimension at most $n$, or a degeneracy of a cell of dimension at most $n$.

Note that as the degeneracy type of $x$ is larger than $s$, $s$ is the degeneracy type of the pair $(s,x)$, so the condition on $(s,x)$ being a degeneracy is equivalent to the same condition on $s$.

In the case $n=0$, there is only one degeneracy type in dimension $0$, so both $\mathbb{L}^{(0)} X $ and $Y^0$ are the simplicial set of cells of $X$ that are degeneracy of $0$-cells.

Assuming $Y^{n-1}$ and $\mathbb{L}^{(n-1)} X$ are isomorphic as claimed. One only needs to check that the new non-degenerate $n$-cell that appears in $Y^{n}$ and $\mathbb{L}^{(n)} X $ are in bijection compatible to their boundary and their image in $X$.
In $\mathbb{L}^{(n)} X $ there is exactly one such cell for each square as above. In $Y^{n}$, any non-degenerate $n$-cells does produce such a square, and conversely given a square:

\[ \xymatrix{\partial \Delta[n]  \ar[r] \ar@{^{(}->}[d] & Y^{n-1} \ar[d] \\ \Delta[n]  \ar[r] & X} \]

\cref{lem:D_contractible} above gives a unique map to extend $\partial \Delta[n] \rightarrow D$ to a non-degenerate $n$-cell of $D$, and the image of $\Delta[n]$ in $X$ automatically has a larger degeneracy type that this extension so this gives a non-degenerate cell of $Y^{n}$ generating this square. This is the unique way to get such a cell in $Y^n$ to be non-degenerate: indeed a cell in $Y^n$ is non-degenerate if and only if its image in $D$ is non-degenerate.
\end{proof}

\begin{corollary}
The simplicial set $D$ of decidable degeneracy type is the cofibrant replacement $\mathbb{L} 1$ of $1$.
\end{corollary}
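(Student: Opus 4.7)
The plan is to deduce this as a direct consequence of \cref{prop:Cof_replacement} applied to the terminal simplicial set $X = 1$. Since $1_n$ has a single element for every $[n] \in \Delta$, every face of $1$ is degenerate, so the unique cell of $1$ in any dimension has the maximal possible degeneracy type (the full set of faces). Consequently, in the description given by \cref{prop:Cof_replacement}, the constraint that the degeneracy type of $x \in X_n$ be at least $s$ becomes automatic: for $X=1$, any decidable degeneracy $n$-type $s$ is permitted.

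More precisely, \cref{prop:Cof_replacement} exhibits a map $\mathbb{L}1 \rightarrow D \times 1 \cong D$ that sends an $n$-cell of $\mathbb{L}1$ to its degeneracy type and induces a bijection between $(\mathbb{L}1)([n])$ and pairs $(s, x)$ where $s$ is a decidable degeneracy $n$-type and $x \in 1_n$ has degeneracy type at least $s$. Since the second condition is vacuous, this bijection is simply between $(\mathbb{L}1)([n])$ and the set of decidable degeneracy $n$-types, which is exactly $D([n])$ by the lemma following \cref{lem:D_contractible}. The resulting map $\mathbb{L}1 \rightarrow D$ is a simplicial isomorphism since both sides transform in the same way under face and degeneracy operators (by naturality of the map in \cref{prop:Cof_replacement}).

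The main (very minor) thing to verify is that the bijection is compatible with the simplicial structure, but this is immediate from the fact that the map in \cref{prop:Cof_replacement} is a map of simplicial sets and that projection onto the $D$-component is a simplicial map. Since $\varepsilon_1 \co \mathbb{L}1 \rightarrow 1$ is (tautologically) a trivial fibration with cofibrant domain by the construction of Garner's small object argument, transporting this structure along the isomorphism $\mathbb{L}1 \iso D$ confirms that $D$ is indeed (isomorphic to) the cofibrant replacement of $1$.
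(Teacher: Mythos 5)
Your proof is correct and follows exactly the paper's (implicit) argument: the corollary is \cref{prop:Cof_replacement} specialised to $X=1$, where the constraint that the cell of $X$ have degeneracy type at least $s$ is vacuous, giving the bijection $(\mathbb{L}1)([n]) \cong D([n])$ compatibly with the simplicial structure. One minor imprecision: the unique $n$-cell of $1$ does not have ``the full set of faces'' as its degeneracy type (vertex faces $[0]\to[n]$ pull back to the non-degenerate $0$-cell), but it does have the maximal \emph{possible} degeneracy type, since no degeneracy type can contain a vertex face, so your conclusion is unaffected.
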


\begin{corollary} \label{cor:cofibrant_smallness}
\begin{enumerate}[$(i)$]

\item[]
 
\item Under the assumption that ``subsets of small sets are small sets'', the map $\mathbb{L} X \rightarrow X$ is a small fibration for any simplicial set $X$.

\item In $\mathrm{CZF}$, given a simplicial class $X$, the cofibrant replacement $\mathbb{L} X \rightarrow X$ exists as a simplicial class, is cofibrant, and the class map $\mathbb{L} X \rightarrow X$ is a trivial fibrations whose fibers are sets.

\end{enumerate}
\end{corollary}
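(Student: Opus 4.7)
Both statements are essentially direct consequences of the explicit description of $\mathbb{L} X$ given in \cref{prop:Cof_replacement}, combined with a size analysis of its fibers over $X$. The common strategy for both parts is: given a cell $x \co \Delta[n] \to X$, describe $\mathbb{L}X[x]$ concretely using \cref{prop:Cof_replacement} and check that each set of $m$-cells has the required smallness. The only real work is in this size estimate; the trivial fibration property is already built into the small object argument used to construct $\mathbb{L}X$.

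For part~(i), \cref{prop:Cof_replacement} yields that an $m$-cell of $\mathbb{L}X[x]$ is a pair $(a, s)$ where $a \co [m] \to [n]$ is a simplicial operator and $s$ is a decidable degeneracy type on $[m]$ contained in the degeneracy type of $x \cdot a$. The set of such $a$ is finite. Each valid $s$ is determined by its characteristic function and so embeds into the set $\{0,1\}^{F}$ of functions from the finite set $F$ of faces of $\Delta[m]$ to $\{0,1\}$; this exponential is small since $\mathsf{u}$ is inaccessible. The propositional resizing hypothesis then implies that the subset of valid $s$ is small, whence $\mathbb{L}X[x]_m$ is small as a finite union of small sets. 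This holds for every $m$, so $\mathbb{L}X[x]$ is a small simplicial set, proving that $\mathbb{L}X \to X$ is a small fibration.

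For part~(ii), we work without inaccessibles and only require fibers to be sets rather than small sets. For a simplicial class $X$, the construction of $\mathbb{L}X$ in \cref{prop:Cof_replacement} carries over at the level of classes: $\mathbb{L}X$ has as $n$-cells the pairs $(s, x)$ with $s \in D_n$ and $x$ an $n$-cell of $X$ whose degeneracy type contains $s$, with simplicial structure defined in the obvious way. Cofibrancy of $\mathbb{L}X$ is witnessed by the degeneracy-detecting projection $\mathbb{L}X \to D$. The trivial fibration property is verified cell by cell using \cref{lem:D_contractible} to extend a given partial lift $\partial \Delta[n] \to D$ to a non-degenerate $n$-cell, whose degeneracy type is automatically contained in that of any prescribed $x \in X_n$. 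Finally, the same computation as in part~(i) shows that $\mathbb{L}X[x]_m$ is cut out by restricted separation from the set of pairs $(a, s)$ with $a \co [m] \to [n]$ and $s \in \{0,1\}^{F}$; this exponential exists as a set in CZF by the exponentiation axiom applied to the finite decidable set $F$, so $\mathbb{L}X[x]_m$ is a set and no resizing axiom is needed.
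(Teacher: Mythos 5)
Your proposal is correct and follows essentially the same approach as the paper: both parts rest on the explicit description of $\mathbb{L}X$ from \cref{prop:Cof_replacement}, identifying the fiber over an $n$-cell $x$ with the (decidable) degeneracy $n$-types contained in the degeneracy type of $x$, then observing that this is a subset of the finite decidable set $\{0,1\}^F$ of decidable face-subsets — small by resizing in part~(i), a set by Exponentiation plus Restricted Separation in part~(ii). You are slightly more explicit than the paper in two places: you spell out that an $m$-cell of the pullback $\mathbb{L}X[x]$ is a pair $(a,s)$ with $a\co[m]\to[n]$ ranging over a finite set (so the pullback is small levelwise as a finite union), and in part~(ii) you name the degeneracy-detecting projection $\mathbb{L}X\to D$ as the witness of cofibrancy rather than directly citing decidability of degeneracy of pairs $(s,x)$; these are cosmetic refinements of the same argument, not a different route.
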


\begin{proof}

For part~$(i)$, the cells of $\mathbb{L} X$ over a given cell $x \in X([n])$ are the decidable degeneracy $n$-types $s$ smaller than the degenracy type of $x$. In particular this identifies with a subset of the set of all degeneracy $n$-types and hence form a small set by assumption.

For part~$(ii)$, the explicit description of $\mathbb{L} X$ given by \cref{prop:Cof_replacement} immediatly allows to make sense of it as a simplicial class. Indeed the class of $(s,x)$ where $x\in X_n$, $s$ is a decidable degeneracy type smaller than the degeneracy type of $x$ is clearly a type. Such a pair $(s,x)$ is degenerated if and only if $s$ is, hence this is decidable, and using \cref{lem:D_contractible} one can immediately see that the projection $\varepsilon_X \co \mathbb{L} X \rightarrow X$ is a trivial fibrations. Finally, the fiber over an $n$-cell $x \in X$, is isomorphic to a subsets of the set of decidable degeneracy $n$-types defined by Restricted Separation, hence it is a set.
\end{proof}

\newcommand{\noopsort}[1]{}

\end{document}